\documentclass{amsart}
\usepackage[margin=1in]{geometry}

\usepackage[T1]{fontenc}
\usepackage{comment}
\newtheorem{theorem}{Theorem}[section]
\theoremstyle{definition}
\newtheorem{definition}[theorem]{Definition}
\theoremstyle{remark}
\newtheorem{remark}[theorem]{Remark}
\newtheorem{proposition}[theorem]{Proposition}
\newtheorem{corollary}[theorem]{Corollary}
\newtheorem{example}[theorem]{Example}
\newtheorem{exercise}[theorem]{Exercise}

\usepackage{amsthm,amsmath,amssymb,amsfonts,amscd,tikz-cd}
\usepackage{graphicx}
\usepackage{bbm}
\usepackage[colorinlistoftodos]{todonotes}
\usepackage{comment}
\usepackage{tikz}
\usetikzlibrary{arrows.meta,decorations.markings,arrows}
\usepackage{faktor}
\usepackage{forloop}
\usepackage{pgffor}
\usepackage{caption}
\usepackage{subcaption}
\usepackage{subfiles}
\usepackage{wrapfig}
\usepackage{bm}

\newcommand{%
	\import{./images/}{.pdf_tex}
}[1]{%
	\import{./images/}{#1.pdf_tex}
}

\title{Lecture notes on Heegaard Floer homology}
\author{C.-M. Michael Wong and Sarah Zampa}
\date{}

\DeclareMathOperator{\rk}{rk}
\DeclareMathOperator{\Sym}{Sym}

\newcommand{\Z}{\mathbb{Z}}

\newcommand{\R}{\mathbb{R}}

\DeclareMathOperator{\Id}{Id}

\DeclareMathOperator{\grad}{grad}
\DeclareMathOperator{\Ind}{Ind}
\DeclareMathOperator{\HFhat}{\widehat{HF}}

\newcommand{\bG}{\mathbb{G}}
\newcommand{\bF}{\mathbb{F}}

\newcommand{\ie}{\textit{i.e.,}\ }
\newcommand{\eg}{\textit{e.g.,}\ }
\newcommand{\cf}{\textit{cf.}\ }
\newcommand{\etal}{\textit{et al}}
\newcommand{\Lint}{L_0\cap L_1}
\newcommand{\s}{\mathfrak{s}}

\DeclareMathOperator{\CM}{CM}
\DeclareMathOperator{\HM}{HM}

\DeclareMathOperator{\CF}{CF}
\DeclareMathOperator{\wHF}{HF} 
\DeclareMathOperator{\CFK}{CFK}
\DeclareMathOperator{\HFK}{HFK}
\newcommand*{\CFc}{\CF^{\mathord{\circ}}}
\newcommand*{\HFc}{\wHF^{\mathord{\circ}}}
\newcommand*{\CFh}{\widehat{\CF}}
\newcommand*{\CFt}{\widetilde{\CF}}
\newcommand*{\CFm}{\CF^{\mathord{-}}}
\newcommand*{\CFp}{\CF^{\mathord{+}}}
\newcommand*{\CFi}{\CF^{\infty}}
\newcommand*{\HFh}{\widehat{\wHF}}

\newcommand*{\wHFm}{\wHF^{\mathord{-}}}

\newcommand*{\CFKc}{\CFK^{\mathord{\circ}}}
\newcommand*{\HFKc}{\HFK^{\mathord{\circ}}}
\newcommand*{\CFKh}{\widehat{\CFK}}
\newcommand*{\HFKh}{\widehat{\HFK}}
\newcommand*{\CFKm}{\CFK^{\mathord{-}}}
\newcommand*{\HFKm}{\HFK^{\mathord{-}}}
\newcommand*{\CFKi}{\CFK^{\infty}}

\newcommand*{\gCFKc}{\mathrm{gCFK}^{\mathord{\circ}}}

\newcommand*{\heeg}{\mathcal{H}}
\newcommand*{\pathsp}{\mathcal{P}}
\newcommand*{\action}{\mathcal{A}}
\newcommand*{\wring}{\mathcal{R}} 

\newcommand*{\heegsurf}{\Sigma}
\newcommand*{\alphas}{\boldsymbol{\alpha}}
\newcommand*{\betas}{\boldsymbol{\beta}}

\DeclareMathOperator{\Spinc}{Spin^c}

\newcommand*{\OO}{\mathbb{O}}
\newcommand*{\XX}{\mathbb{X}}

\newcommand*{\JJ}{\mathcal{J}}

\DeclareMathOperator{\GC}{GC}
\DeclareMathOperator{\GH}{GH}
\newcommand*{\GCh}{\widehat{\GC}}
\newcommand*{\GCt}{\widetilde{\GC}}
\newcommand*{\GCm}{\GC^{\mathord{-}}}
\newcommand*{\GCp}{\GC^{\mathord{+}}}
\newcommand*{\GCi}{\GC^{\infty}}
\newcommand*{\GHh}{\widehat{\GH}}
\newcommand*{\GHt}{\widetilde{\GH}}
\newcommand*{\GHm}{\GH^{\mathord{-}}}

\newcommand*{\genset}{\mathcal{S}}

\newcommand*{\bdy}{\partial}
\DeclareMathOperator{\bdyt}{\widetilde{\bdy}}
\newcommand*{\bdyh}{\widehat{\bdy}}
\newcommand*{\bdym}{\bdy^{\mathord{-}}}

\newcommand*{\pentmap}{P}
\newcommand*{\hexmap}{H}

\DeclareMathOperator{\Rect}{Rect}
\newcommand*{\eRect}{\Rect^{\mathord{\circ}}}
\DeclareMathOperator{\Pent}{Pent}
\newcommand*{\ePent}{\Pent^{\mathord{\circ}}}

\DeclareMathOperator{\Tor}{Tor}
\newcommand*{\blank}{\cdot}

\newcommand*{\moduli}{\mathcal{M}}
\newcommand*{\acsfamily}{\mathcal{J}}

\newcommand*{\wfilt}{\mathcal{F}}
\newcommand{\G}{\mathbb{G}}
\DeclareMathOperator{\x}{\mathbf{x}}
\DeclareMathOperator{\y}{\mathbf{y}}
\DeclareMathOperator{\z}{\mathbf{z}}
\DeclareMathOperator{\w}{\mathbf{w}}

\begin{document}
\maketitle
\vspace{-7mm}

\begin{abstract}
These are the notes for a lecture series on Heegaard Floer homology, given by the first author at the R\'{e}nyi Institute in January 2023, as part of a special semester titled ``Singularities and Low Dimensional Topology’’. Familiarity with Heegaard diagrams and Morse theory is assumed. We first illustrate the relevant algebraic structures via grid homology, and then highlight the geometric rather than combinatorial nature of the general theory. We then define Heegaard Floer homology in the context of Lagrangian Floer homology, and describe some key properties.
\end{abstract}

\tableofcontents
\clearpage

Heegaard Floer homology has grown to be a vast subject in low-dimensional 
topology since its inception in the early 2000s. It is thus impossible to cover 
in four lectures its numerous features, variants, applications, and connections 
to other theories.  These lecture notes are aimed at students and researchers 
who are not yet acquainted with the theory, and represent my attempt at 
providing the context of how various ideas fit in it, in a short amount of 
time.  As a result, we will inevitably be vague at times and omit details that 
the expert will find important. We will also have to omit or glide over many 
significant contributions by members of the Floer theory community, and the 
reader is encouraged to fill in these gaps by consulting other references.

\section*{Overview} 

Floer homology is a kind of Morse homology on an
infinite-dimensional space. A particular example is Heegaard Floer homology, a $3$-manifold invariant defined by Ozsv\'ath and Szab\'o in the early 2000s.
The following is a simplified overview of one way to construct it, as well as some properties.

Given a $3$-manifold $Y$, we take a Heegaard diagram $\heeg$ for $Y$, and use $\heeg$ to construct an infinite-dimensional manifold $\pathsp (\heeg)$ with a functional $\action$. With some further choices, we then apply ``Morse homology'' to the pair $(\pathsp (\heeg), \action)$ to get a chain complex $\CFc (\heeg)$ over a nice ring $\wring$ (for example, the field $\bF = \bF_2$ of two elements), whose homotopy type turns out to depend only on $Y$---under further constraints on $\heeg$ and the choices along the way. Consequently, we obtain invariants $\CFc (Y)$ and $\HFc (Y)$, where $\HFc (Y)$ is the homology of $\CFc (Y)$.  
There are also ``relative'' invariants $\CFKc (Y, K)$ and $\HFKc (Y, K)$ for knots $K \subset Y$, defined similarly; note, however, that $\CFKc (Y, K)$ is often a filtered complex and $\HFKc (Y, K)$ is the \emph{associated   graded homology} of $\CFKc (Y, K)$ rather than its honest homology.


Here are
some basic structural properties of the Heegaard Floer complex:
%
\begin{itemize}
  \item It admits a direct sum decomposition by elements of $\Spinc(Y)$, the set
    of $\Spinc$-structures on $Y$;
  \item It has a homological $\Z / m$-grading, known as the \emph{Maslov 
      grading}.
    In many cases (\eg when $b_1 (Y) = 0$), we have $m=0$,
    and so we get a $\Z$-graded chain complex;
  \item There are many different flavors: $\CFh$, $\CFm$, $\CFp$, $\CFi$, 
    $\CFt$,\footnote{As we shall see later, the \emph{tilde} version is almost 
      but not quite an invariant.} which are defined over different rings 
    $\wring$. (The notation $\CFc$ denotes any of these flavors.)
\end{itemize}


In learning Heegaard Floer theory, it can be somewhat 
overwhelming to try to absorb the large amount of geometry and analysis 
required in its definition while keeping track of the relevant algebraic 
structures. Thus, in most of the first two lectures, we will separate these two 
aspects, focusing on the algebra first via a combinatorial model of $\CFKc 
(S^3, K)$ known as \emph{grid homology}.

\section{Grid homology}
\subsection{Definition and $\bdy^2 = 0$}
Grid homology was first defined in \cite{MOS09}, and the first combinatorial 
proof of its invariance appeared in \cite{MOST07}. A comprehensive treatise can 
be found in \cite{OSS15:GH-book}, which we will mostly be following. (However, we will 
be omitting a lot of details and proofs, and the reader is encouraged to 
consult \cite{OSS15:GH-book} for a fuller understanding.)

\begin{definition}
  \label{def:grid-diagram-cpx}
  A (toroidal) \emph{grid diagram} $\G=(n, \OO, \XX)$  of an oriented knot 
  $K\subset S^3$ is a diagram of an $n \times n$ grid, with a set $\OO$ of 
  $O$'s and a set $\XX$ of $X$'s, such that there is exactly one $O$ and one 
  $X$ in each row and in each column. (There is often an additional condition 
  that any square is not occupied by an $O$ and an $X$ at the same time.) The 
  diagram is to be thought of as lying on a torus, with the leftmost and 
  rightmost (resp.\ topmost and bottommost) lines identified into circles.
See Figure~\ref{fig:grid-trefoil} for an example.
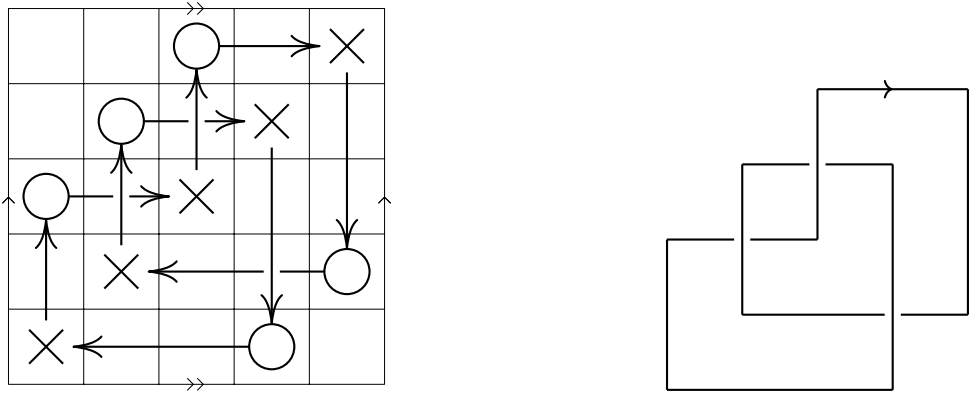
\begin{figure}[!htbp]
\centering
\begin{subfigure}{.5\textwidth}
  \centering
  \begin{tikzpicture}
\draw[step=1cm,black,very thin] (0,0) grid (5,5);

\draw[thick] (0.5,2.5) circle (0.3);
\draw[thick] (1.5,3.5) circle (0.3);
\draw[thick] (2.5,4.5) circle (0.3);
\draw[thick] (3.5,0.5) circle (0.3);
\draw[thick] (4.5,1.5) circle (0.3);


\foreach \n in {0,...,4}{
    \path (\n+0.15,\n+0.15) node(x) {} 
        (\n+0.85,\n+0.85) node(y) {};
    \draw[thick] (x) -- (y);
    \path (\n+0.15,\n+0.85) node(x) {} 
        (\n+0.85,\n+0.15) node(y) {};
    \draw[thick] (x) -- (y);
}

\draw[{To[length=4mm, width=3mm]}-, thick] (0.85,0.5) -- (3.2,0.5);
\draw[{To[length=4mm, width=3mm]}-, thick] (1.85,1.5) -- (4.2,1.5);
\draw[-{To[length=4mm, width=3mm]}, thick] (0.8,2.5) -- (2.15,2.5);
\draw[-{To[length=4mm, width=3mm]}, thick] (1.8,3.5) -- (3.15,3.5);
\draw[-{To[length=4mm, width=3mm]}, thick] (2.8,4.5) -- (4.15,4.5);


\draw[fill=white, white] (3.4,1.4) rectangle (3.6,1.6);
\draw[fill=white, white] (1.4,2.4) rectangle (1.6,2.6);
\draw[fill=white, white] (2.4,3.4) rectangle (2.6,3.6);


\draw[-{To[length=4mm, width=3mm]}, thick] (0.5,0.85) -- (0.5,2.2);
\draw[-{To[length=4mm, width=3mm]}, thick] (1.5,1.85) -- (1.5,3.2);
\draw[-{To[length=4mm, width=3mm]}, thick] (2.5,2.85) -- (2.5,4.2);
\draw[{To[length=4mm, width=3mm]}-, thick] (3.5,0.8) -- (3.5,3.15);
\draw[{To[length=4mm, width=3mm]}-, thick] (4.5,1.8) -- (4.5,4.15);


\draw[Straight Barb-Straight Barb] (0,2.5);
\draw[Straight Barb-Straight Barb] (5,2.5);
\draw[-{Straight Barb[sep=1pt]. Straight Barb[]}] (2.445,5)--(2.6,5);
\draw[-{Straight Barb[sep=1pt]. Straight Barb[]}] (2.445,0)--(2.6,0);

\end{tikzpicture}
\end{subfigure}%
\begin{subfigure}{.5\textwidth}
  \centering
  \begin{tikzpicture}

    \draw[thick] (0.5,0.5) -- (3.5,0.5);
\draw[thick] (1.5,1.5) -- (4.5,1.5);
\draw[thick] (0.5,2.5) -- (2.5,2.5);
\draw[thick] (1.5,3.5) -- (3.5,3.5);

\begin{scope}[thick,decoration={
    markings,
    mark=at position 0.5 with {\arrow{>}}}
    ] 
    \draw[postaction={decorate}] (2.5,4.5) -- (4.5,4.5);
\end{scope}


\draw[fill=white, white] (3.4,1.4) rectangle (3.6,1.6);
\draw[fill=white, white] (1.4,2.4) rectangle (1.6,2.6);
\draw[fill=white, white] (2.4,3.4) rectangle (2.6,3.6);


\draw[thick] (0.5,0.5) -- (0.5,2.5);
\draw[thick] (1.5,1.5) -- (1.5,3.5);
\draw[thick] (2.5,2.5) -- (2.5,4.5);
\draw[thick] (3.5,0.5) -- (3.5,3.5);
\draw[thick] (4.5,1.5) -- (4.5,4.5);

\end{tikzpicture}
\end{subfigure}
\caption{A grid diagram of the left-handed trefoil}
\label{fig:grid-trefoil}
\end{figure}
\end{definition}

The diagram $\G$ encodes a knot $K$ as follows: Connect the $X$'s to the $O$'s vertically, and the $O$'s to the $X$'s
horizontally, with the vertical strands crossing over the horizontal
strands; this gives a rectilinear projection of an oriented knot $K$.
%
%
We say that $\G$ is a \emph{grid diagram of $K$}.
It is not hard to see that any knot $K\subset S^3$ (and indeed any
link in $S^3$) can be presented this way.

We now define a chain complex associated to a grid diagram.

\begin{definition}
Given a grid diagram $\G$, we define the generating set
\[
  \genset (\G)=\{\text{matchings between vertical and horizontal circles in } \G\},
\]
where a matching is a one-to-one correspondence between the vertical
and horizontal circles.  A matching is denoted on the grid diagram by
dots on the intersections between vertical and horizontal circles.
Figure~\ref{fig:grid-trefoil-gen} illustrates two different matchings, in red and in green respectively.

The \emph{tilde grid chain complex} is now defined as the $\bF$-module
\[
  \widetilde{\GC}(\G)=\bF\langle \genset(\G) \rangle,
\]
with boundary homomorphism $\bdyt \colon \GCt (\G) \to \GCt (\G)$ given by
\[
  \bdyt (\x) = \sum_{\y \in \genset (\G)} \sum_{\substack{r \in \eRect (\x, \y) 
      \\ r \cap (\OO \cup \XX) = \emptyset}} \y,
\]
where $\eRect (\x, \y)$ denotes the space of empty rectangles from $\x$ to $\y$.
A \emph{rectangle $r$ from $\x$ to $\y$} is a $2$-chain on the (toroidal) grid 
diagram, such that $\bdy r$ consists exactly of two arcs on horizontal circles 
from a point in $\x$ to a point in $\y$ and two arcs on vertical circles from a 
point in $\y$ to a point in $\x$, in its induced orientation.
A succinct way to say the above is that $\bdy r$ consists of 4 arcs, and
\[
  \bdy ((\bdy r) \cap (\text{horizontal circles})) = \y - \x.
\]
Such a rectangle exists only if $\x$ and $\y$ coincide everywhere except at two 
points each. A rectangle is \emph{empty} if it does not contain any points in 
$\x$ (or equivalently $\y$) in its interior.
\end{definition}

In other words, the differential counts rectangles $r$ whose northeast and 
southwest corners are in $\x$, whose northwest and southeast corners are in 
$y$, and which do not contain any other points in $\x$ or $\y$. In this 
\emph{tilde} flavor, the contributing rectangles $r$ are further required to 
not contain any $O$'s or $X$'s in its interior, expressed in the requirement
$r\cap (\OO \cup \XX)=\emptyset$.

\begin{example}
Consider the diagram of the left-handed trefoil again.
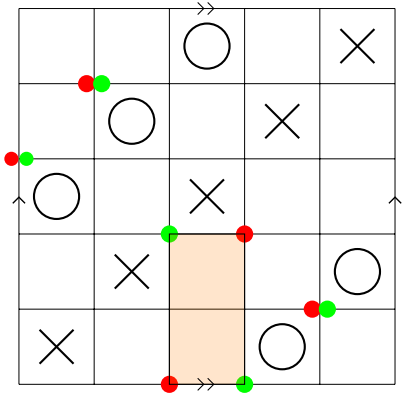
\begin{figure}[!htbp]
\centering
\begin{tikzpicture}
\draw[step=1cm,black,very thin] (0,0) grid (5,5);

\draw[thick] (0.5,2.5) circle (0.3);
\draw[thick] (1.5,3.5) circle (0.3);
\draw[thick] (2.5,4.5) circle (0.3);
\draw[thick] (3.5,0.5) circle (0.3);
\draw[thick] (4.5,1.5) circle (0.3);


\foreach \n in {0,...,4}{
    \path (\n+0.15,\n+0.15) node(x) {} 
        (\n+0.85,\n+0.85) node(y) {};
    \draw[thick] (x) -- (y);
    \path (\n+0.15,\n+0.85) node(x) {} 
        (\n+0.85,\n+0.15) node(y) {};
    \draw[thick] (x) -- (y);
}

\draw[thick, red, fill] (-0.1,3) circle (0.08);
\draw[thick, red, fill] (0.9,4) circle (0.1);
\draw[thick, red, fill] (2,0) circle (0.1);
\draw[thick, red, fill] (3,2) circle (0.1);
\draw[thick, red, fill] (3.9,1) circle (0.1);
\draw[thick, green, fill] (0.1,3) circle (0.08);
\draw[thick, green, fill] (1.1,4) circle (0.1);
\draw[thick, green, fill] (2,2) circle (0.1);
\draw[thick, green, fill] (3,0) circle (0.1);
\draw[thick, green, fill] (4.1,1) circle (0.1);


\draw[fill=orange, fill opacity=0.2] (2,0) rectangle (3,2);


\draw[Straight Barb-Straight Barb] (0,2.5);
\draw[Straight Barb-Straight Barb] (5,2.5);
\draw[-{Straight Barb[sep=1pt]. Straight Barb[]}] (2.445,5)--(2.6,5);
\draw[-{Straight Barb[sep=1pt]. Straight Barb[]}] (2.445,0)--(2.6,0);

\end{tikzpicture}
\caption{Two generators of $S(\G)$ on a diagram $\G$ of the left-handed 
  trefoil}
\label{fig:grid-trefoil-gen}
\end{figure}

The generators $\x$ (in red) and $\y$ (in green)
coincide everywhere, except on the vertices of the
empty rectangle (shaded in orange) from $\x$ to $\y$.
\end{example}

Of course, one needs to show that $\GCt$ is indeed a chain complex.

\begin{proposition}
  $\bdyt$ is a boundary map; \ie
  $\bdyt\circ\bdyt=0$.
\end{proposition}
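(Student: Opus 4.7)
The plan is to establish $\bdyt^2 = 0$ via a standard involution argument over $\bF = \bF_2$. For each generator $\x$, expand
\[
  \bdyt^2(\x) = \sum_{\z \in \genset(\G)} N(\x, \z)\, \z,
\]
where $N(\x, \z)$ counts triples $(\y, r_1, r_2)$ with $r_1 \in \eRect(\x, \y)$, $r_2 \in \eRect(\y, \z)$, both empty and with $(r_1 \cup r_2) \cap (\OO \cup \XX) = \emptyset$. It suffices to exhibit, for each pair $(\x, \z)$, a fixed-point-free involution on this set of triples. The involution is defined by decomposing the combined $2$-chain $D = r_1 + r_2$ in its \emph{other} natural way as an ordered pair of empty rectangles.

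The case analysis is driven by $|\x \cap \z|$; since each rectangle moves exactly two coordinates, the symmetric difference has size $0$, $2$, or $4$. When $\x$ and $\z$ differ in four coordinates, $r_1$ and $r_2$ act on disjoint pairs of generator points, and their domains are either geometrically disjoint, abut at a single corner, or one contains the other. In every such subcase one can reverse the order of the moves, passing through a different intermediate generator $\y' \neq \y$, thereby producing a distinct pair of empty rectangles with the same underlying $D$. When $\x$ and $\z$ differ in exactly two coordinates, $r_1$ and $r_2$ share one corner at a coordinate of $\y$, and $D$ is an ``L''-shaped or hexagonal region that admits exactly two decompositions into an ordered pair of empty rectangles, corresponding to two possible intermediate generators; the involution swaps these. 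In each subcase one verifies that emptiness and disjointness from $\OO \cup \XX$ are inherited by the swapped pair, so the involution is well-defined and has no fixed points, forcing $N(\x, \z)$ to be even.

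The remaining case $\x = \z$ is handled separately. The combined $2$-chain $D$ then has trivial boundary on all horizontal and vertical circles, which forces $D$ to be a positive $\Z$-combination of the full horizontal or vertical annular strips of the torus; since each $r_i$ has multiplicity one, $D$ is a single such annulus. But any horizontal or vertical annulus contains an entire row or column of $\G$, and so, by the defining property of a grid diagram, contains at least one $O$ and at least one $X$. This contradicts $r_i \cap (\OO \cup \XX) = \emptyset$, so there are no contributions when $\x = \z$.

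The main obstacle is the middle case: checking that, for every overlap pattern of $r_1$ and $r_2$, the alternative decomposition of $D$ really is a pair of \emph{empty} rectangles and that neither contains a forbidden marking. The bookkeeping when the two rectangles overlap in a smaller rectangle, or when one corner of one lies on an edge of the other, is the most delicate step; however, in each configuration the alternative decomposition is uniquely forced by the corner pattern of $D$, which causes emptiness and marker-avoidance to transfer automatically from $(r_1, r_2)$ to the swapped pair.
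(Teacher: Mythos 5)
Your proof is correct and follows essentially the same pairing argument as the paper: decompose the composite domain $r_1 + r_2$ in its other natural way (reversing the order when the four moving coordinates are distinct, re-slicing the L-shaped region when a corner is shared), and conclude by working over $\bF_2$. The only substantive difference is that you also treat the $\x = \z$ case explicitly, ruling out annular composite domains because every row and column contains an $O$ and an $X$ --- a degenerate case the paper's proof leaves implicit.
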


\begin{proof}
Suppose that $\z$ is a generator that appears as a term in $\bdyt \circ \bdyt
(\x)$. We will show that there are always an even number of ways to connect
$\x$ to $\z$ with two empty rectangles in order. Since $\GCt$ is
defined over $\bF=\bF_2$, this will complete the proof.

Marking $\x$ in red and $\z$ in green in the diagrams below,
there are three different possible configurations of the
rectangles. The first two configurations are where the rectangles do not share 
a vertex, as in Figure~\ref{fig:proof-tildemap-zeroA}.
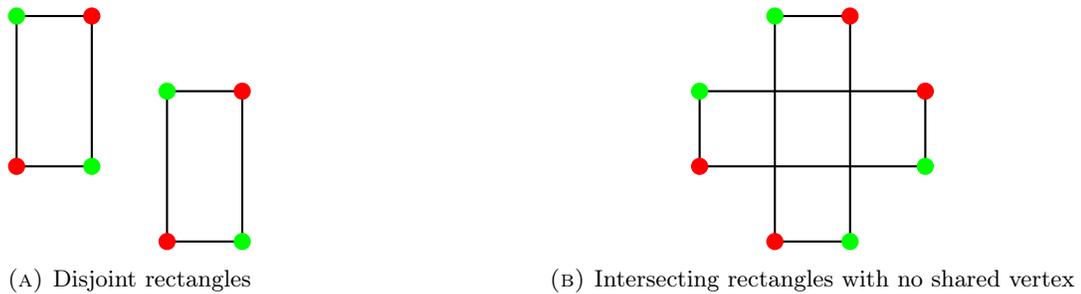
\begin{figure}[!htbp]
     \centering
     \begin{subfigure}[b]{0.4\textwidth}
         \centering
         \begin{tikzpicture}
\draw[thick] (0,1) -- (0,3);
\draw[thick] (1,1) -- (1,3);
\draw[thick] (2,0) -- (2,2);
\draw[thick] (3,0) -- (3,2);

\draw[thick] (0,1) -- (1,1);
\draw[thick] (0,3) -- (1,3);
\draw[thick] (2,0) -- (3,0);
\draw[thick] (2,2) -- (3,2);

\draw[thick, red, fill] (0,1) circle (0.1);
\draw[thick, red, fill] (1,3) circle (0.1);
\draw[thick, red, fill] (2,0) circle (0.1);
\draw[thick, red, fill] (3,2) circle (0.1);

\draw[thick, green, fill] (0,3) circle (0.1);
\draw[thick, green, fill] (1,1) circle (0.1);
\draw[thick, green, fill] (2,2) circle (0.1);
\draw[thick, green, fill] (3,0) circle (0.1);
    
\end{tikzpicture}
         \caption{Disjoint rectangles}
         \label{fig:proof-tildemap-zero-1}
     \end{subfigure}
     \hfill
     \begin{subfigure}[b]{0.5\textwidth}
         \centering
         \begin{tikzpicture}
\draw[thick] (0,1) -- (0,2);
\draw[thick] (1,0) -- (1,3);
\draw[thick] (2,0) -- (2,3);
\draw[thick] (3,1) -- (3,2);

\draw[thick] (0,1) -- (3,1);
\draw[thick] (0,2) -- (3,2);
\draw[thick] (1,0) -- (2,0);
\draw[thick] (1,3) -- (2,3);

\draw[thick, red, fill] (0,1) circle (0.1);
\draw[thick, red, fill] (1,0) circle (0.1);
\draw[thick, red, fill] (2,3) circle (0.1);
\draw[thick, red, fill] (3,2) circle (0.1);

\draw[thick, green, fill] (0,2) circle (0.1);
\draw[thick, green, fill] (1,3) circle (0.1);
\draw[thick, green, fill] (2,0) circle (0.1);
\draw[thick, green, fill] (3,1) circle (0.1);

\end{tikzpicture}
            \caption{Intersecting rectangles with no shared vertex}
            \label{fig:proof-tildemap-zero-2}
     \end{subfigure}
        \caption{Two rectangles with no shared vertex}
        \label{fig:proof-tildemap-zeroA}
\end{figure}

In these two cases, there are two distinct ways to
  count the rectangles: either counting $r_1$ first then $r_2$---connecting 
  $\x$ to some $\y$ and then to $\z$---or
  counting $r_2$ first then $r_1$---connecting $\x$ to some $\w$ and then to 
  $\z$.\footnote{Technically, using $r_i$ to denote the rectangles in different 
    orders is a slight abuse of notation, since, \eg  the first $r_1$ belongs 
    to $\eRect (\x, \y)$, while the second $r_1$ belongs to $\eRect (\w, \z)$.}

The third case consists of L-shaped domains that arise from juxtaposing two
rectangles sharing a single vertex, as in 
Figure~\ref{fig:proof-tildemap-zeroB}, and its reflections and rotations.  In 
this case, with the same
configuration for $\x$ (in red) and $\z$ (in green), there are two ways of 
decomposing such
an L-shaped domain into rectangles.

\begin{figure}[!ht]
     \centering
     \begin{subfigure}[b]{0.22\textwidth}
         \centering

         \begin{tikzpicture}
    \draw[thick] (0,0) -- (0,1);
\draw[thick] (1,0) -- (1,3);
\draw[thick] (2,0) -- (2,3);

\draw[thick] (0,0) -- (2,0);
\draw[thick] (0,1) -- (2,1);
\draw[thick] (1,3) -- (2,3);


\draw[fill=orange, fill opacity=0.2] (0,0) rectangle (1,1);
\draw[fill=purple, fill opacity=0.2] (1,0) rectangle (2,3);

\draw[thick, red, fill] (0,0) circle (0.1);
\draw[thick, red, fill] (1,1) circle (0.1);
\draw[thick, red, fill] (1.9,3) circle (0.1);

\draw[thick, green, fill] (-0.1,1) circle (0.1);
\draw[thick, green, fill] (1,3) circle (0.1);
\draw[thick, green, fill] (2,0) circle (0.1);

\draw[thick, blue, fill] (0.1,1) circle (0.1);
\draw[thick, blue, fill] (1,0) circle (0.1);
\draw[thick, blue, fill] (2.1,3) circle (0.1);

\end{tikzpicture}
\end{subfigure}
    \begin{subfigure}[b]{0.52\textwidth}
         \centering
         \begin{tikzpicture}
    \draw[thick] (0,0) -- (0,1);
\draw[thick] (1,0) -- (1,3);
\draw[thick] (2,0) -- (2,3);

\draw[thick] (0,0) -- (2,0);
\draw[thick] (0,1) -- (2,1);
\draw[thick] (1,3) -- (2,3);

\draw[thick, red, fill] (0,0) circle (0.1);
\draw[thick, red, fill] (1,1) circle (0.1);
\draw[thick, red, fill] (2,3) circle (0.1);

\draw[thick, green, fill] (0,1) circle (0.1);
\draw[thick, green, fill] (1,3) circle (0.1);
\draw[thick, green, fill] (2,0) circle (0.1);

\draw [Straight Barb-](-2,1.5) -- (-0.5,1.5);
\draw [-Straight Barb](2.5,1.5) -- (4,1.5);

\end{tikzpicture}
\end{subfigure}
    \begin{subfigure}[b]{0.22\textwidth}
    \begin{tikzpicture}
    \draw[thick] (0,0) -- (0,1);
\draw[thick] (1,0) -- (1,3);
\draw[thick] (2,0) -- (2,3);

\draw[thick] (0,0) -- (2,0);
\draw[thick] (0,1) -- (2,1);
\draw[thick] (1,3) -- (2,3);


\draw[fill=purple, fill opacity=0.2] (0,0) rectangle (2,1);
\draw[fill=orange, fill opacity=0.2] (1,1) rectangle (2,3);

\draw[thick, red, fill] (-0.1,0) circle (0.1);
\draw[thick, red, fill] (1,1) circle (0.1);
\draw[thick, red, fill] (2,3) circle (0.1);

\draw[thick, green, fill] (0,1) circle (0.1);
\draw[thick, green, fill] (0.9,3) circle (0.1);
\draw[thick, green, fill] (2,0) circle (0.1);

\draw[thick, black, fill] (0.1,0) circle (0.1);
\draw[thick, black, fill] (1.1,3) circle (0.1);
\draw[thick, black, fill] (2,1) circle (0.1);

\end{tikzpicture}

         \label{fig:proof-tildemap-zero-3}
     \end{subfigure}
        \caption{Intersecting rectangles with a shared vertex}
        \label{fig:proof-tildemap-zeroB}
\end{figure}
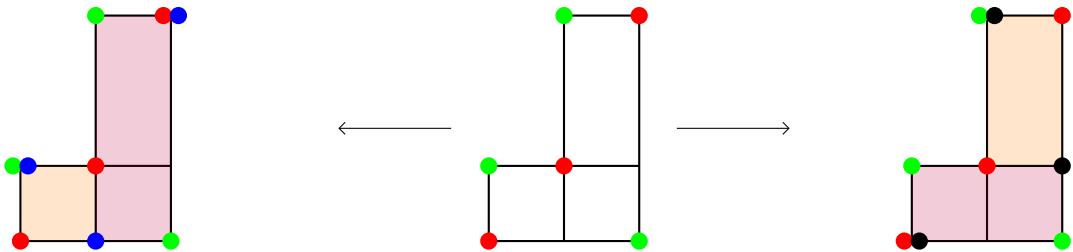

As in the left of Figure~\ref{fig:proof-tildemap-zeroB}, we can first count the 
bottom left rectangle (shaded in orange), arriving at the intermediate 
generator $\y$ (in blue), before counting the right rectangle (in red), ending 
in $\z$.  Or, as in the right of the figure, we can first count the top right
rectangle (shaded in red), giving the intermediate generator $\w$ (in black), 
before counting the bottom rectangle (in orange), again ending in $\z$.
This shows that such
decompositions always come in pairs, 
concluding the argument.
\end{proof}

\begin{remark}
  One can similarly define $\GCt (\G)$ over $\mathbb{Z}$, but it requires 
  adding signs to all definitions and proofs, which represents significantly 
  more work.
\end{remark}

In a similar fashion, we can define the \emph{minus} flavor of the grid chain 
complex.

\begin{definition}
  The \emph{minus grid chain complex} is the $\bF [U_1, \dotsc, U_n]$-module
  \[
    \GCm (\G)=\bF[U_1,\dotsc,U_n]\langle \genset(\G)\rangle,
  \]
  with boundary homomorphism
  \(
    \bdym \colon \GC^-(\G) \to \GC^-(\G)
  \)
  given by
  \[
    \bdym (\x) = \sum_{\y \in \genset (\G)} \sum_{\substack{r \in \eRect (\x, 
        \y) \\ r \cap \XX = \emptyset}} U_1^{O_1 (r)} \dotsm U_n^{O_n (r)} \y,
  \]
  where $O_i(r)$ is the number of times $O_i$ appears in $r$, which is either 
  $0$ or $1$.
  Here, we have fixed an (arbitrary) indexing of the elements of $\OO$, from 
  $O_1$ to $O_n$.
\end{definition}

In other words, in the \emph{minus} flavor, we allow $O$'s in the rectangles 
that are counted in the boundary homomorphism, and record them by formal 
variables $U_i$. We continue to block all $X$'s as before.


\begin{proposition}
  $\bdym$ is also a boundary map, \ie $\bdym \circ \bdym = 0$.
\end{proposition}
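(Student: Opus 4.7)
The plan is to mirror the argument for $\bdyt^2 = 0$, additionally verifying that the monomial weight $U_1^{O_1(r_1)+O_1(r_2)} \dotsm U_n^{O_n(r_1)+O_n(r_2)}$ matches within each cancelling pair. Expanding
\[
  \bdym \circ \bdym (\x) = \sum_{\z} \Biggl( \sum_{\y} \sum_{(r_1, r_2)} \prod_{i=1}^{n} U_i^{O_i(r_1)+O_i(r_2)} \Biggr) \z,
\]
where the inner sum ranges over $\y \in \genset(\G)$ together with empty, $\XX$-avoiding rectangles $r_1 \in \eRect(\x,\y)$ and $r_2 \in \eRect(\y,\z)$, the goal is to show that for each fixed $\x, \z$ the bracketed coefficient vanishes in $\bF[U_1,\dotsc,U_n]$.

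First, I would fix $\x$ and $\z$ and organize the contributing pairs $(r_1, r_2)$ by the shape of the total domain $D = r_1 \cup r_2$, using exactly the same trichotomy as in the tilde case: (i) $r_1$ and $r_2$ are disjoint, (ii) they overlap but share no vertex, or (iii) they share a single vertex, so that $D$ is an L-shape. In cases (i) and (ii), swapping the roles of $r_1$ and $r_2$ produces another valid pair contributing through a different intermediate generator. In case (iii), the L-shape $D$ admits exactly one other decomposition into two rectangles, which supplies the second contributing pair.

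The new observation specific to the minus flavor is that the $U$-weight is preserved under each pairing. In (i) and (ii) this is immediate from the symmetry $O_i(r_1)+O_i(r_2) = O_i(r_2)+O_i(r_1)$. In (iii), since the two rectangles in each decomposition share only a single vertex, the $O$-markings of $D$ are partitioned among them, giving $O_i(r_1)+O_i(r_2) = O_i(D) = O_i(r_1')+O_i(r_2')$, where $(r_1', r_2')$ is the other decomposition. The $\XX$-avoidance condition is preserved similarly: trivially in (i) and (ii), and in (iii) because both decompositions are $\XX$-avoiding if and only if $D$ itself is (each $X$ in $D$ must belong to exactly one of the two rectangles in any decomposition, spoiling that pair). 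Hence the contributions cancel pairwise over $\bF$.

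The main obstacle, inherited from the tilde case, is the careful geometric classification of pairs of empty rectangles on the toroidal grid, together with verifying that the swap in (i), (ii) and the re-decomposition in (iii) yield legitimate pairs—in particular, that the new intermediate generator is genuinely in $\genset(\G)$ and that the second rectangle remains empty with respect to it. This emptiness analysis is independent of the $O$-marking and carries over verbatim from the proof of $\bdyt^2 = 0$; the only additional content in the minus setting is the $U$-weight bookkeeping sketched above.
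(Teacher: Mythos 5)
Your proposal is correct and is essentially the paper's own argument: the paper proves $\bdym \circ \bdym = 0$ by declaring it ``identical to that for $\bdyt$, \emph{mutatis mutandis}'', and your pairing of contributions via the same three configurations, plus the check that the $U$-weights and the $\XX$-avoidance are preserved under each pairing, is exactly the content of that phrase spelled out.
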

\begin{proof}
  The proof is identical to that for $\bdyt$, \emph{mutatis mutandis}.
\end{proof}

Since $\GCt (\G)$ and $\GCm (\G)$ are chain complexes, we can compute their 
homologies, denoted $\GHt (\G)$ and $\GHm (\G)$ respectively.

\subsection{Invariance}

The natural question to ask is if $\GHt (\G)$ and $\GHm (\G)$ are invariants of 
the knot $K$ that $\G$ encodes. The following example shows that $\GHt (\G)$ is 
not an invariant.


\begin{example}
  Figure~\ref{fig:three-diagrams-unknot} shows three grid diagrams for the 
  unknot. (The leftmost $1 \times 1$ grid does not satisfy the additional 
  condition in Definition~\ref{def:grid-diagram-cpx} but is otherwise a fine 
  grid diagram.)
\begin{figure}[!htbp]
     \centering
     \begin{subfigure}[b]{0.15\textwidth}
         \centering
         \begin{tikzpicture}
\draw[step=1cm,black,very thin] (0,0) grid (1,1);

\draw[thick] (0.25,0.5) circle (0.2);


\path (0.45,0.2) node(x) {} 
        (1,0.8) node(y) {};
    \draw[thick] (x) -- (y);
    \path (0.45,0.8) node(x) {} 
        (1,0.2) node(y) {};
    \draw[thick] (x) -- (y);

\draw[thick, red, fill] (0,0) circle (0.1);


\draw[Straight Barb-Straight Barb] (0,0.6);
\draw[Straight Barb-Straight Barb] (1,0.6);
\draw[-{Straight Barb[sep=1pt]. Straight Barb[]}] (0.445,1)--(0.6,1);
\draw[-{Straight Barb[sep=1pt]. Straight Barb[]}] (0.445,0)--(0.6,0);
    
\end{tikzpicture}
         \caption{$\G_1$}
         \label{fig:unknot-h1}
     \end{subfigure}
     \begin{subfigure}[b]{0.3\textwidth}
         \centering
         \begin{tikzpicture}

\draw[step=1cm,black,very thin] (0,0) grid (2,2);

\draw[thick] (0.5,1.5) circle (0.3);
\draw[thick] (1.5,0.5) circle (0.3);


\foreach \n in {0,...,1}{
    \path (\n+0.15,\n+0.15) node(x) {} 
        (\n+0.85,\n+0.85) node(y) {};
    \draw[thick] (x) -- (y);
    \path (\n+0.15,\n+0.85) node(x) {} 
        (\n+0.85,\n+0.15) node(y) {};
    \draw[thick] (x) -- (y);
}

\draw[thick, red, fill] (0,0) circle (0.1);
\draw[thick, red, fill] (1,1) circle (0.1);
\draw[thick, green, fill] (1,0) circle (0.1);
\draw[thick, green, fill] (0,1) circle (0.1);


\draw[Straight Barb-Straight Barb] (0,1.35);
\draw[Straight Barb-Straight Barb] (2,1.35);
\draw[-{Straight Barb[sep=1pt]. Straight Barb[]}] (1.245,2)--(1.4,2);
\draw[-{Straight Barb[sep=1pt]. Straight Barb[]}] (1.245,0)--(1.4,0);

\end{tikzpicture}
         \caption{$\G_2$}
         \label{fig:unknot-h2}
     \end{subfigure}
     \begin{subfigure}[b]{0.45\textwidth}
         \centering
         \begin{tikzpicture}
\draw[step=1cm,black,very thin] (0,0) grid (3,3);

\draw[thick] (0.5,2.5) circle (0.3);
\draw[thick] (1.5,0.5) circle (0.3);
\draw[thick] (2.5,1.5) circle (0.3);


\foreach \n in {0,...,1}{
    \path (\n+0.15,\n+1.15) node(x) {} 
        (\n+0.85,\n+1.85) node(y) {};
    \draw[thick] (x) -- (y);
    \path (\n+0.15,\n+1.85) node(x) {} 
        (\n+0.85,\n+1.15) node(y) {};
    \draw[thick] (x) -- (y);
}
\path (2.15,0.15) node(x) {} 
        (2.85,0.85) node(y) {};
    \draw[thick] (x) -- (y);
    \path (2.15,0.85) node(x) {} 
        (2.85,0.15) node(y) {};
    \draw[thick] (x) -- (y);
    

\draw[Straight Barb-Straight Barb] (0,1.5);
\draw[Straight Barb-Straight Barb] (3,1.5);
\draw[-{Straight Barb[sep=1pt]. Straight Barb[]}] (1.445,3)--(1.6,3);
\draw[-{Straight Barb[sep=1pt]. Straight Barb[]}] (1.445,0)--(1.6,0);

\end{tikzpicture}
         \caption{$\G_3$}
         \label{fig:unknot-h3}
     \end{subfigure}
        \caption{Grid diagrams of the unknot of different sizes}
        \label{fig:three-diagrams-unknot}
\end{figure}
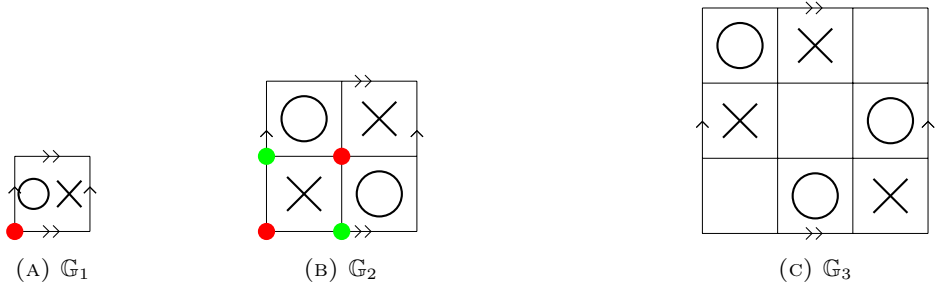

We can compute:
\begin{itemize}
    \item $\widetilde{\GH}(\G_1)\cong\widetilde{\GC}(\G_1)\cong\bF$,
      because there is only one generator (in red in Figure
      \ref{fig:unknot-h1}), and $\bdyt \equiv 0$ as there are no rectangles;
    \item $\widetilde{\GH}(\G_2) \cong \GCt (\G_2) \cong\bF\oplus\bF$, since 
      there are two
      matchings, and again $\bdyt \equiv 0$ as each of the four empty 
      rectangles contains an $O$ or an $X$;
    \item $\widetilde{\GH}(\G_3)\cong\bF^{\oplus 4}$, left as an exercise for 
      the reader. (There are $3!=6$ generators in $\GCt (\G_3)$.)
\end{itemize}
%
\end{example}

What about $\GHm (\G)$? It turns out that it is an invariant of $K$, but in a 
slightly subtle way. In particular, note that if $\G_1$ and $\G_2$ are grid 
diagrams of $K$ of different sizes, $\GCm (\G_1)$ and $\GCm (\G_2)$ are \emph{a 
  priori} modules over different rings.

How might one prove the invariance of grid homology? To do so, one can use the 
following theorem by Cromwell, that may be thought of as the ``Reidemeister 
Theorem'' for grid diagrams.


\begin{theorem}[Cromwell \cite{Crom95:embedding}]
Two grid diagrams $\G_1$ and $\G_2$ represent the same knot $K\subset
S^3$ if and only if they are related by a finite sequence of row and column 
commutations, and
(de)stabilizations.
\end{theorem}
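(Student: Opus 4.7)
The plan is to prove both directions, with the ``if'' direction being routine and the ``only if'' direction being the substantive content. For the easy direction, I would check each grid move in turn: row and column commutations (at least the non-interleaving ones) correspond to planar isotopies of the associated rectilinear projection, while stabilizations amount to a local Reidemeister I-type modification that replaces a single horizontal (or vertical) segment by an L-shaped detour through a new row and column. So each move preserves the underlying knot type $K \subset S^3$.

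For the hard direction, my strategy is to invoke the classical Reidemeister theorem and show that each of its moves, as well as planar isotopy, can be realized by a finite sequence of commutations and (de)stabilizations. First I would pass from a grid diagram $\G$ to its rectilinear projection, and conversely show that any planar knot diagram can be isotoped to a rectilinear one and then perturbed into a grid diagram by inserting sufficient stabilizations so that each row and each column contains exactly one $O$ and one $X$. Given two grid diagrams $\G_1, \G_2$ for the same knot $K$, their projections are related by a finite sequence of planar isotopies and Reidemeister moves R1, R2, R3.

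The main work is then to realize each of these modifications at the level of grid diagrams, up to commutations and (de)stabilizations. A planar isotopy that moves a horizontal or vertical arc past another, without changing the combinatorial type, is precisely a row or column commutation; more general planar isotopies can be broken into such elementary pieces, possibly after stabilizing first to create enough room. The R1 move corresponds exactly to (de)stabilization in the grid setting, since adding a small kink amounts to splitting one arc into an L-shape via a new row and column. The R2 and R3 moves are the delicate ones: they require a carefully chosen local reconfiguration of several consecutive rows and columns, realized by first stabilizing to introduce auxiliary rows/columns, then performing a sequence of commutations to slide strands past one another, and finally destabilizing.

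The hard part is this last case analysis: showing that every local Reidemeister move on the projection can be implemented by a bounded sequence of elementary grid moves regardless of the surrounding combinatorics. The key technical point is that commutations are only legal when the two rows (or columns) being swapped satisfy a non-interleaving condition on their $O$'s and $X$'s, and to force this condition one may need to stabilize repeatedly so as to separate obstructing markings. Once one knows that sufficiently many stabilizations always make the necessary commutations available, and that the $\G_i \to \G_i'$ conversions from projection to grid are themselves expressible via grid moves, the argument concludes. I would cite \cite{Crom95:embedding} for the full combinatorial verification rather than grind through the pictures.
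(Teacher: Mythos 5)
The paper does not prove this theorem at all; it is stated as an external result with a citation to Cromwell, so there is no in-paper argument to compare against. Your overall strategy --- pass to the rectilinear projection, invoke the classical Reidemeister theorem, and then show planar isotopy and each Reidemeister move can be realized by a finite sequence of commutations and (de)stabilizations --- is indeed the standard route, and is essentially how Cromwell (and the treatment in \cite{OSS15:GH-book}) proceed.

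There is, however, a genuine conceptual error in the middle of your outline. You assert that stabilization ``amounts to a local Reidemeister I-type modification'' and, in the hard direction, that ``R1 corresponds exactly to (de)stabilization in the grid setting.'' This is not correct in the convention used here. A grid stabilization splits one marked square into a $2\times 2$ block with three markers; the effect on the rectilinear projection is to replace a straight horizontal or vertical segment by an L-shaped elbow, introducing no new crossing whatsoever. It is a planar isotopy, not a Reidemeister I move. (This is also why the easy direction works so smoothly: both commutations and stabilizations preserve the knot type by planar isotopy alone.) Consequently, R1 must itself be realized by a compound sequence of stabilizations and commutations, just like R2 and R3, and your case analysis has one more case than you budgeted for. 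The rest of the outline --- stabilizing to create room, commuting to slide strands past one another, destabilizing to clean up, and deferring the combinatorial verification to Cromwell --- is sound, with the usual caveat that verifying the interleaving condition is always eventually satisfiable after enough stabilizations is exactly where the real work lies.
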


\begin{figure}[!htbp]
    \centering
    \includegraphics[scale=0.28]{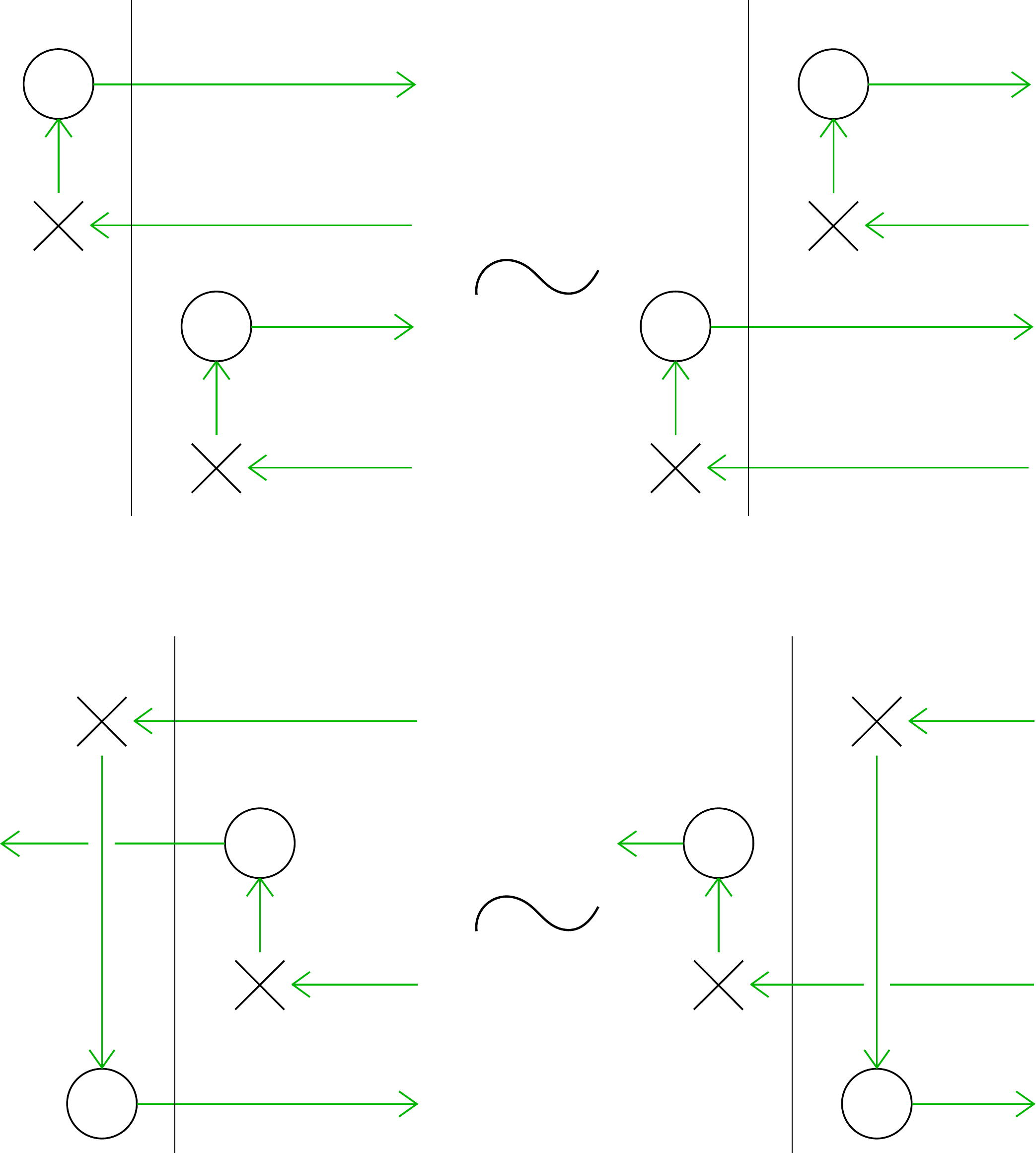}
    \caption{Two column commutations}
    \label{fig:grid-commutation}
\end{figure}

A \emph{column commutation} swaps two adjacent columns when
either the two corresponding vertical strands project to disjoint vertical 
intervals,
or one projected vertical interval
is contained in the interior of the other.
See Figure~\ref{fig:grid-commutation} for examples.
A \emph{row commutation} is similar but swaps adjacent rows.


Given a square in the grid diagram that is occupied by either an $X$ or an $O$, 
a \emph{stabilization} splits the row and column of the square the symbol is in into two rows 
and two columns, replacing the $1 \times 1$ marked square with a $2 \times 2$ 
grid of which three squares are marked. For a given marker type (say, an $X$), 
there are four distinct ways to achieve this, corresponding to which square in 
the $2 \times 2$ grid is unmarked. The rest of the diagram is left
unchanged. See Figure~\ref{fig:grid-de-stabilization} for examples.  
\emph{Destabilization} is the process inverse to stabilization.

\begin{figure}[!htbp]
    \centering
    \includegraphics[scale=0.26]{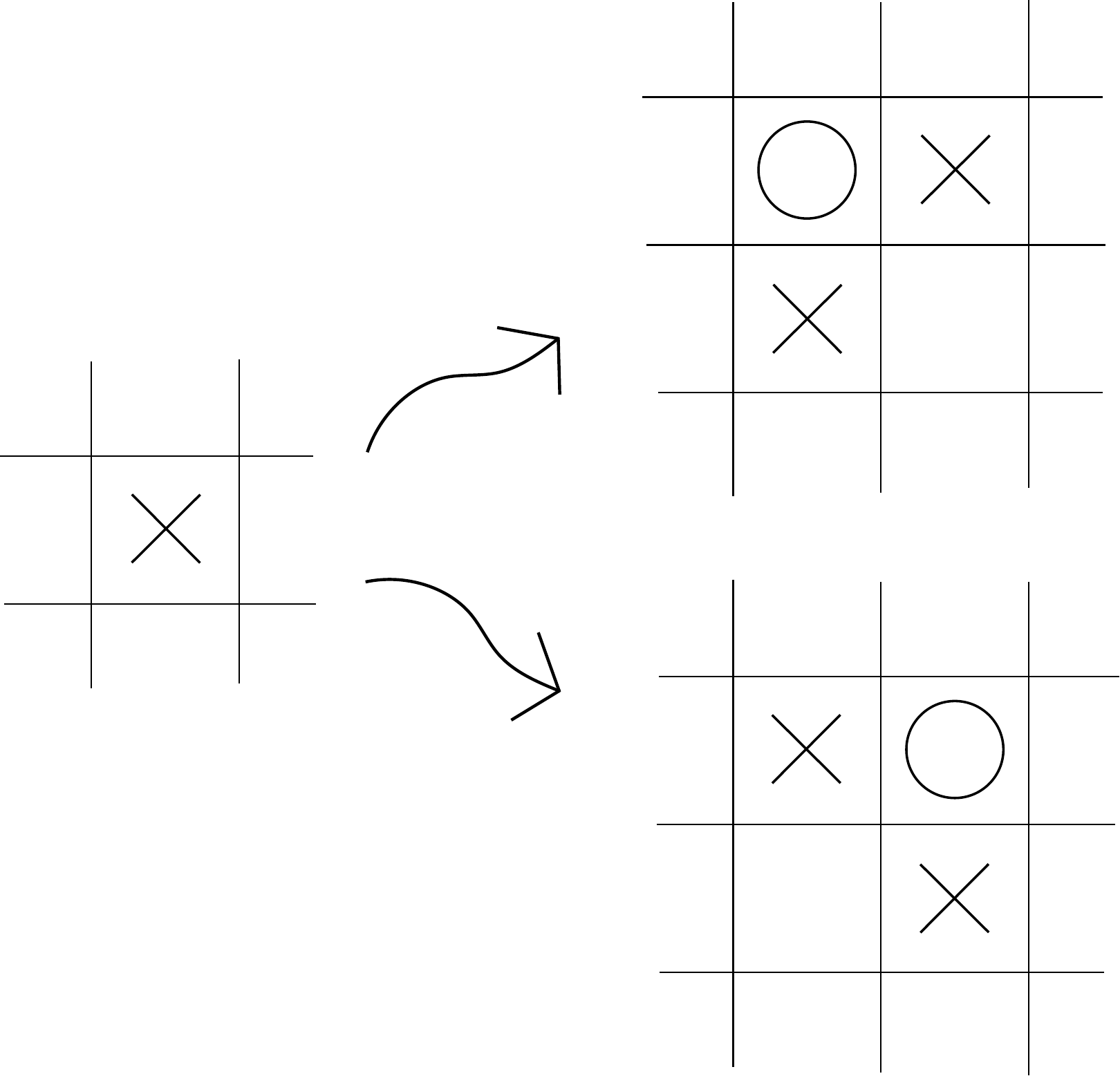}
    \caption{Stabilization; the diagram shows two possibilities out of the
    four}
    \label{fig:grid-de-stabilization}
\end{figure}

\begin{proposition}
  The isomorphism types of $\GHt$ and $\GHm$ are invariant under a row or 
  column commutation.
\end{proposition}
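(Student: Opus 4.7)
The plan is to build a chain homotopy equivalence between $\GCm(\G)$ and $\GCm(\G')$ (and similarly between $\GCt(\G)$ and $\GCt(\G')$) when $\G$ and $\G'$ differ by a column commutation; by symmetry the same argument handles row commutations. Write $\G$ and $\G'$ as sharing all horizontal circles and all vertical circles except one, with the distinguished vertical circles $\alpha \subset \G$ and $\alpha' \subset \G'$ meeting transversely in exactly two points. Generators of $\GCm(\G)$ and $\GCm(\G')$ agree away from the $\alpha/\alpha'$ column, so one can compare them through geometric domains that straddle both diagrams.

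First I would define a pentagon map $\pentmap \colon \GCm(\G) \to \GCm(\G')$ by
\[
  \pentmap(\x) = \sum_{\y \in \genset(\G')} \sum_{\substack{p \in \ePent(\x, \y) \\ p \cap \XX = \emptyset}} U_1^{O_1(p)} \dotsm U_n^{O_n(p)}\, \y,
\]
where an empty pentagon is an embedded domain whose boundary consists of four arcs on circles of $\G$ or $\G'$ plus one short arc running from one of the two points of $\alpha \cap \alpha'$ back to itself through a corner, and which contains no component of $\x$ in its interior. For $\GCt$ one further forbids $\OO$'s in $p$. The analogous reverse map $\pentmap' \colon \GCm(\G') \to \GCm(\G)$ uses empty pentagons running the other way.

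Next I would verify that $\pentmap$ is a chain map, \ie $\bdym \circ \pentmap + \pentmap \circ \bdym = 0$. The proof mimics the proof of $\bdym \circ \bdym = 0$: I would enumerate every domain that can be written as a juxtaposition rectangle-then-pentagon or pentagon-then-rectangle and show that generically these decompositions pair up in the classical ways (disjoint, overlapping, L-shaped), so the corresponding terms cancel in characteristic two. The new feature is a small family of degenerate juxtapositions in which the two sub-domains share the pentagon's ``extra'' vertex at $\alpha \cap \alpha'$; I would check by direct inspection that these still come in pairs, or cancel against thin annular regions that cross $\alpha$ and $\alpha'$. The same analysis, with $\OO \cup \XX$ replacing $\XX$ in the emptiness condition, works for $\GCt$.

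Finally, to upgrade the chain maps to chain homotopy inverses, I would introduce a hexagon homotopy $\hexmap \colon \GCm(\G) \to \GCm(\G)$ that counts empty hexagons (with one extra corner at each of the two points of $\alpha \cap \alpha'$) and run a similar boundary analysis to prove
\[
  \pentmap' \circ \pentmap + \Id = \bdym \circ \hexmap + \hexmap \circ \bdym,
\]
with the symmetric identity for $\pentmap \circ \pentmap'$. Passing to homology yields $\GHm(\G) \cong \GHm(\G')$, and the $\GCt$ version is identical modulo the emptiness condition. The main obstacle I expect is the bookkeeping in the boundary analysis of $\pentmap' \circ \pentmap$: beyond the pentagon/pentagon juxtapositions decomposing as hexagon-rectangle pairs, one must recognize the specific thin strip bounded by arcs of $\alpha$ and $\alpha'$ that survives uncancelled and contributes the identity. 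Identifying this distinguished region and verifying that nothing else has multiplicity one will be the delicate point; everything else is essentially a refinement of the $\bdyt^2 = 0$ argument already given.
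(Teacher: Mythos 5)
Your proposal follows the paper's proof essentially verbatim: it combines the two diagrams into one, defines pentagon-counting chain maps $\GCm(\G) \to \GCm(\G')$ and back, uses hexagon-counting homotopies, identifies the thin annular domains as the only uncancelled terms giving the identity, and handles rows by symmetry and the tilde flavor by additionally blocking $O$'s. The one point to sharpen is that the two pentagon maps should be distinguished by which of the two intersection points of the interchanged vertical circles serves as the fifth corner (the paper's distinguished points $a$ and $b$, one for each direction), rather than leaving that choice ambiguous in both maps.
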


\begin{proof}
  Suppose that $\G_1$ and $\G_2$ are related by a column commutation. We can 
  combine $\G_1$ and $\G_2$ into a single diagram, as in the right of 
  Figure~\ref{fig:grid-proof-invar-comm}. Here, if we focus on the blue circle 
  and erase the green circle, we obain (a diagram isotopic to) $\G_1$; 
  similarly, erasing the blue circle, we obtain $\G_2$. Note that we position 
  the blue and green curves so as to avoid triple intersections with 
  horizontal circles. Label the intersection points between and blue and green 
  circles by $a$ and $b$ as shown.
  
\begin{figure}[!htbp]
	\def\svgwidth{0.95\columnwidth}
	\centering
	%
\begingroup%
  \makeatletter%
  \providecommand\color[2][]{%
    \errmessage{(Inkscape) Color is used for the text in Inkscape, but the package 'color.sty' is not loaded}%
    \renewcommand\color[2][]{}%
  }%
  \providecommand\transparent[1]{%
    \errmessage{(Inkscape) Transparency is used (non-zero) for the text in Inkscape, but the package 'transparent.sty' is not loaded}%
    \renewcommand\transparent[1]{}%
  }%
  \providecommand\rotatebox[2]{#2}%
  \newcommand*\fsize{\dimexpr\f@size pt\relax}%
  \newcommand*\lineheight[1]{\fontsize{\fsize}{#1\fsize}\selectfont}%
  \ifx\svgwidth\undefined%
    \setlength{\unitlength}{1159.97113037bp}%
    \ifx\svgscale\undefined%
      \relax%
    \else%
      \setlength{\unitlength}{\unitlength * \real{\svgscale}}%
    \fi%
  \else%
    \setlength{\unitlength}{\svgwidth}%
  \fi%
  \global\let\svgwidth\undefined%
  \global\let\svgscale\undefined%
  \makeatother%
  \begin{picture}(1,0.8601835)%
    \lineheight{1}%
    \setlength\tabcolsep{0pt}%
    \put(0,0){\includegraphics[width=\unitlength,page=1]{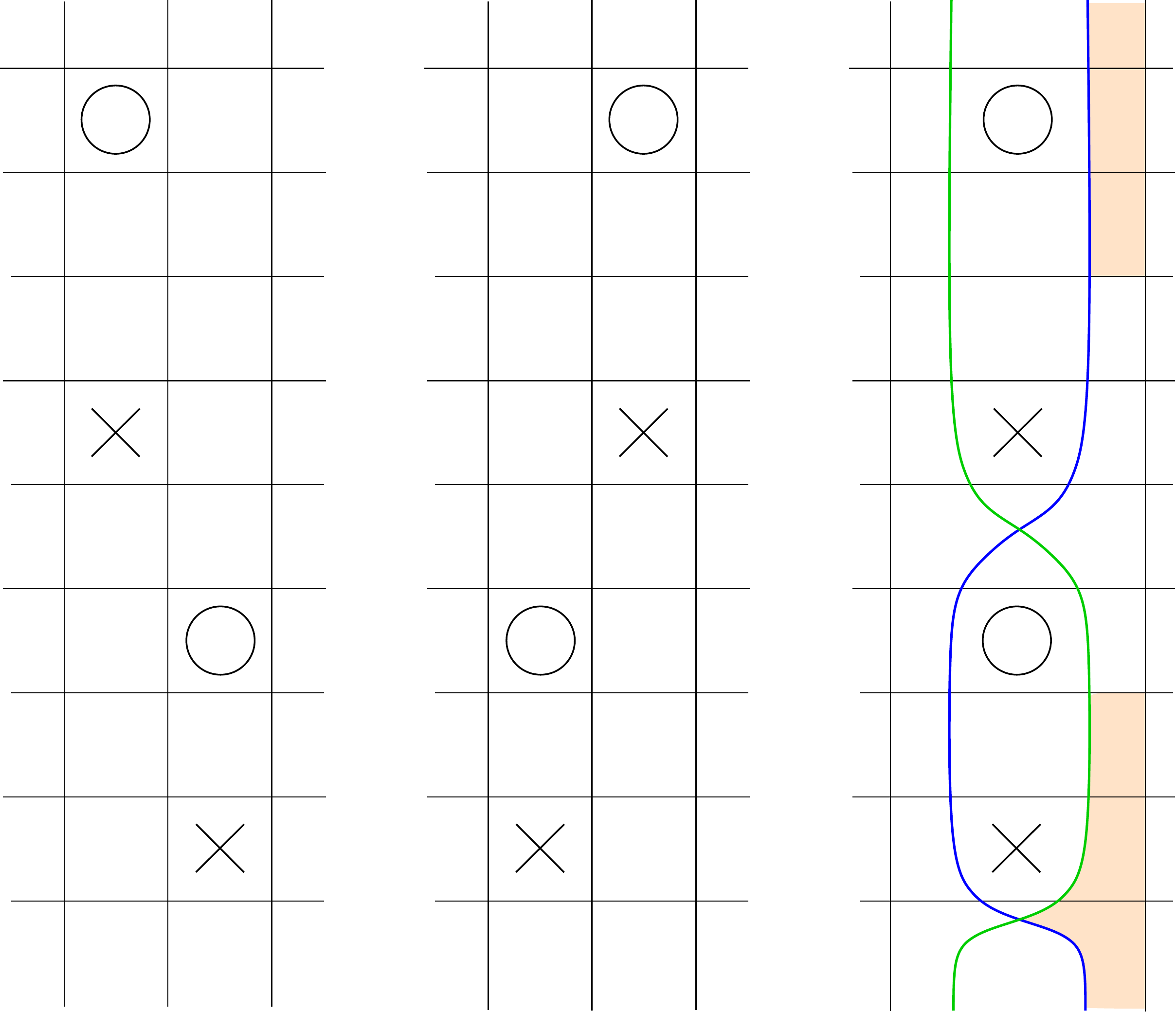}}%
    \put(0.91008515,0.06884313){\color[rgb]{0,0,0}\makebox(0,0)[lt]{\lineheight{1.25}\smash{\begin{tabular}[t]{l}$a$\end{tabular}}}}%
    \put(0.9048561,0.40104231){\color[rgb]{0,0,0}\makebox(0,0)[lt]{\lineheight{1.25}\smash{\begin{tabular}[t]{l}$b$\end{tabular}}}}%
    \put(0,0){\includegraphics[width=\unitlength,page=2]{grid-proof-invar-comm.pdf}}%
  \end{picture}%
\endgroup%

	\caption{Combining two grid diagrams related by a column commutation into one diagram}
	\label{fig:grid-proof-invar-comm}
\end{figure}

  The idea now is to use this combined diagram to define a chain homotopy 
  equivalence $\pentmap_1 \colon \GCm (\G_1) \to \GCm (\G_2)$, which counts 
  empty pentagons. Instead of defining empty pentagons rigorously, we will just 
  refer to Figure~\ref{fig:grid-proof-invar-comm}, and say that for $\x \in 
  \genset (\G_1)$ and $\y \in \genset (\G_2)$, an element of $\ePent (\x, \y)$ 
  is an embedded pentagon whose sides lie on the curves in the combined 
  diagram, with vertices on $\x$, $\y$, and the distinguished point $a$, and 
  leave the details to the reader.  (Note that pentagons can exist to the left 
  or the right of the column in question.) Then $\pentmap_1$ is defined by
  \[
    \pentmap_1 (\x) = \sum_{\y \in \genset (\G_2)} \sum_{\substack{p \in \ePent 
        (\x, \y) \\ p \cap \XX = \emptyset}} U_1^{O_1 (p)} \dotsm U_n^{O_n (p)} 
    \y.
  \]


  First, we need to show that $\pentmap_1$ is a chain map, \ie
  \[
    \bdym_{\GCm (\G_2)} \circ \pentmap_1 + \pentmap_1 \circ \bdym_{\GCm (\G_1)} 
    = 0.
  \]
  (Recall that we are in characteristic $2$.) The proof is similar to the proof 
  that $\bdym \circ \bdym = 0$, but we juxtapose a pentagon with a rectangle 
  (in either order), instead of two rectangles.

  Next, we need to show that $\pentmap_1$ has a homotopy inverse $\pentmap_2 
  \colon \GCm (\G_2) \to \GCm (\G_1)$. The definition of $\pentmap_2$ is 
  \emph{verbatim} identical to that of $\pentmap_1$, but this time we count 
  empty pentagons with a vertex on the distinguished point $b$ instead of $a$.  
  The homotopies $\hexmap_i \colon \GCm (\G_i) \to \GCm (\G_i)$ are defined by 
  counting empty hexagons that have both $a$ and $b$ as vertices.  The claim is 
  then that
  \begin{align*}
    \pentmap_2 \circ \pentmap_1 + \bdym_{\GCm (\G_1)} \circ \hexmap_1 + 
    \hexmap_1 \circ \bdym_{\GCm (\G_1)} & = \Id_{\GCm (\G_1)},\\
    \pentmap_1 \circ \pentmap_2 + \bdym_{\GCm (\G_2)} \circ \hexmap_2 + 
    \hexmap_2 \circ \bdym_{\GCm (\G_2)} & = \Id_{\GCm (\G_2)}.
  \end{align*}
  Indeed, most terms that appear on the left-hand side of these equations 
  cancel out in pairs as in the proof that $\bdym \circ \bdym = 0$, with the 
  exception of domains that are topologically annuli, as in 
  Figure~\ref{fig:grid-proof-invar-comm-id}, which cancel with the identity 
  maps on the right-hand side.
  
  \begin{figure}[!htbp]
  	\def\svgwidth{0.6\columnwidth}
  	\centering
  	%
\begingroup%
  \makeatletter%
  \providecommand\color[2][]{%
    \errmessage{(Inkscape) Color is used for the text in Inkscape, but the package 'color.sty' is not loaded}%
    \renewcommand\color[2][]{}%
  }%
  \providecommand\transparent[1]{%
    \errmessage{(Inkscape) Transparency is used (non-zero) for the text in Inkscape, but the package 'transparent.sty' is not loaded}%
    \renewcommand\transparent[1]{}%
  }%
  \providecommand\rotatebox[2]{#2}%
  \newcommand*\fsize{\dimexpr\f@size pt\relax}%
  \newcommand*\lineheight[1]{\fontsize{\fsize}{#1\fsize}\selectfont}%
  \ifx\svgwidth\undefined%
    \setlength{\unitlength}{716.06562236bp}%
    \ifx\svgscale\undefined%
      \relax%
    \else%
      \setlength{\unitlength}{\unitlength * \real{\svgscale}}%
    \fi%
  \else%
    \setlength{\unitlength}{\svgwidth}%
  \fi%
  \global\let\svgwidth\undefined%
  \global\let\svgscale\undefined%
  \makeatother%
  \begin{picture}(1,1.39343099)%
    \lineheight{1}%
    \setlength\tabcolsep{0pt}%
    \put(0,0){\includegraphics[width=\unitlength,page=1]{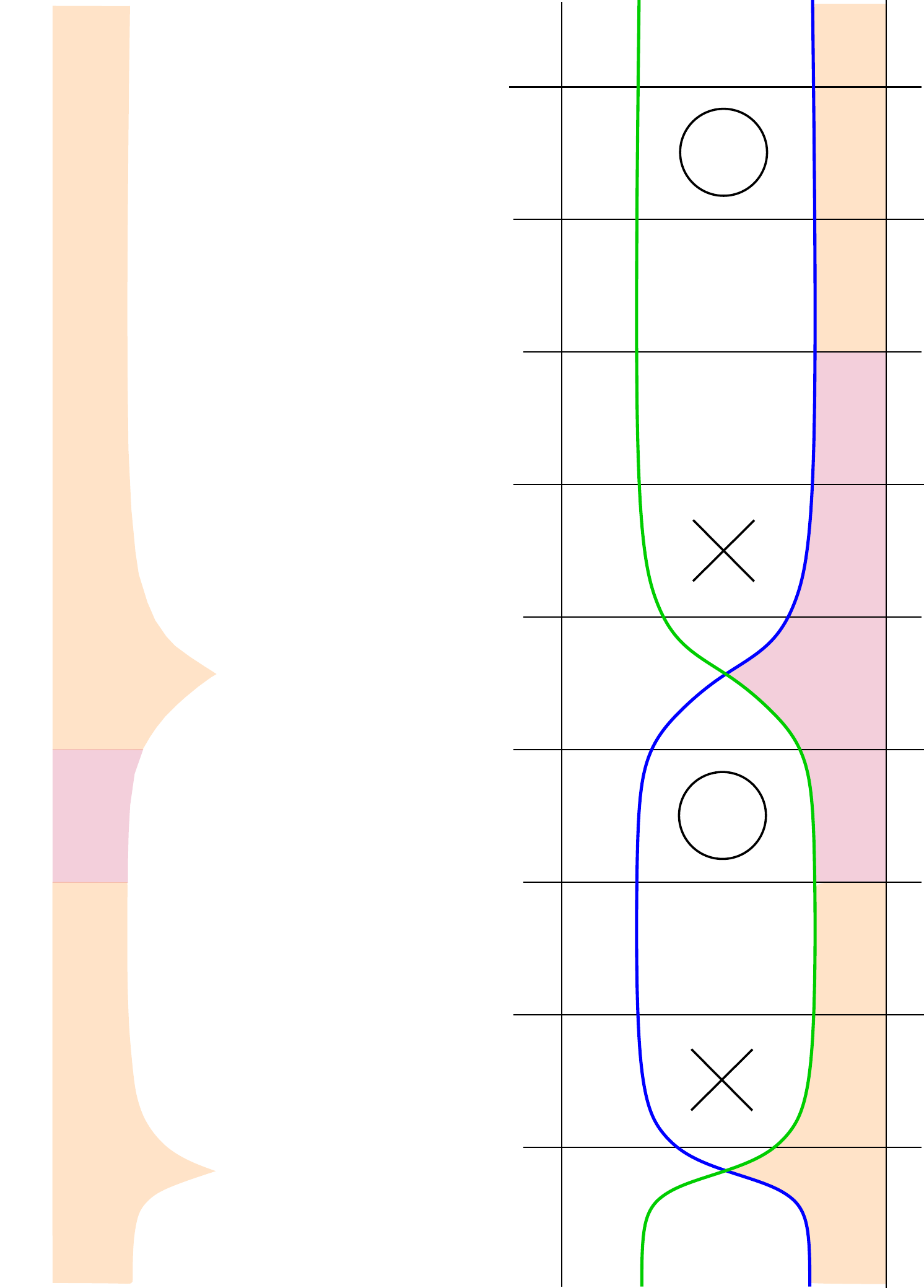}}%
    \put(0.85537575,0.11152058){\makebox(0,0)[lt]{\lineheight{1.25}\smash{\begin{tabular}[t]{l}$a$\end{tabular}}}}%
    \put(0.84690509,0.64965764){\makebox(0,0)[lt]{\lineheight{1.25}\smash{\begin{tabular}[t]{l}$b$\end{tabular}}}}%
    \put(0,0){\includegraphics[width=\unitlength,page=2]{grid-proof-invar-comm-id.pdf}}%
    \put(0.30433313,0.11152058){\makebox(0,0)[lt]{\lineheight{1.25}\smash{\begin{tabular}[t]{l}$a$\end{tabular}}}}%
    \put(0.29586247,0.64965764){\makebox(0,0)[lt]{\lineheight{1.25}\smash{\begin{tabular}[t]{l}$b$\end{tabular}}}}%
    \put(0,0){\includegraphics[width=\unitlength,page=3]{grid-proof-invar-comm-id.pdf}}%
  \end{picture}%
\endgroup%

    \caption{Two annuli, one representing a term in $\bdym_{\GCm (\G_1)} \circ 
      \hexmap_1 (\x)$ (left) and one representing a term in $\pentmap_2 \circ 
      \pentmap_1 (\x')$ (right), that cancel with the identity. Here $\x$ and 
      $\x'$ (both in red) are two distinct generators in $\genset (\G_1)$.  
      Note that there are other possibilities of such annular domains.  
      Depending on $\x$, there is always exactly one term that results in such 
      an annular domain}
  	\label{fig:grid-proof-invar-comm-id}
  \end{figure}


The paragraphs above show that $(\pentmap_1)_* \colon \GHm (\G_1) \to \GHm 
(\G_2)$ and $(\pentmap_2)_* \colon$ $\GHm (\G_2) \to \GHm (\G_1)$ are inverses of 
each other, and are thus isomorphisms. The proof for row commutations is 
similar, and the proof for $\GHt$ is obtained by modifying the proof above to 
block all $O$ markers in all maps.
\end{proof}


The invariance of $\GHm$ under (de)stabilization is quite a bit more tedious, 
and so we refer the reader to \cite[Section~5.2]{OSS15:GH-book} for full details. We 
simply give a brief outline here:
\begin{itemize}
  \item First, we claim that the actions by the formal variables $U_i$ and 
    $U_j$ on $\GCm (\G)$ are chain homotopic for all $i, j \in \{ 1, \dotsc, n 
    \}$.  The proof proceeds by defining a homotopy $\phi \colon \GCm (\G) \to 
    \GCm (\G)$ that counts rectangles containing a single $X$ marker $X_i$ in 
    it.  One can show that
    \[
      \bdym \circ \phi + \phi \circ \bdym = U_i + U_{i+1},
    \]
    where the corresponding markers $O_i$ and $O_{i+1}$ are in the same row and 
    same column as $X_i$ respectively; this proof is again similar to the proof 
    that $\bdym \circ \bdym = 0$.
  \item The upshot is that $\GHm (\G)$ can be thought of as an $\bF [U]$-module, 
    where $U$ acts by multiplication by any of the $U_i$'s.
  \item One then proceeds to prove that the isomorphism type of $\GHm$ is 
    invariant under (de)stabilization, by considering certain mapping cones.  
    Combined with invariance under commutation, we see that the isomorphism 
    type of $\GHm (\G)$ is a knot invariant, which we denote by $\GHm (K)$.
  \item One may define the \emph{hat grid chain complex} by setting one of the 
    $U_i$'s to be zero, \ie
    \[
      \GCh (\G) = \GCm (\G) \otimes_{\bF [U_1, \dotsc, U_n]} \left( \faktor{\bF 
          [U_1, \dotsc, U_n]}{U_1} \right).
    \]
    This has the effect of setting the action of $U$ to be zero, and so we get 
    an $\bF$-module $\GCh (\G)$. It is clear from the above that the isomorphism 
    type of the $\bF$-module $\GHh (\G)$ is also an invariant, which we denote 
    by $\GHh (K)$.
  \item Finally, we may place $\GHt (\G)$ in this context also. Note that $\GCt 
    (\G)$ is obtained from $\GCh (\G)$ by also setting all the other $U_i$'s to 
    zero. In other words, we have set $n - 1$ ``too many'' $U_i$'s to be zero, 
    and some homological algebra shows that
    \[
      \GHt (\G) \cong \GHh (\G) \otimes_{\bF} \bF^{\otimes 2^{n-1}},
    \]
    and so the isomorphism type of $\GHt (\G)$ is ``almost'' a knot invariant 
    and can be denoted $\GHt (K, n)$.
\end{itemize}

%




We may summarize the invariance of grid homology here.

\begin{theorem}[Manolescu, Ozsv\'ath, and Sarkar {\cite[Theorem~1.1]{MOS09}} 
  and Manolescu, Ozsv\'ath, Szab\'o, and Thurston {\cite[Theorem~1.2]{MOST07}}, 
  \cf {\cite[Theorem~5.3.1]{OSS15:GH-book}}]
  Let $\G$ be a grid diagram of $K \subset S^3$.  The graded isomorphism types 
  of $\GHm (\G)$ and $\GHh (\G)$ are invariants of the knot $K$.  Thus, we may write 
  $\GHm (K)$ and $\GHh (K)$ for these homologies.
\end{theorem}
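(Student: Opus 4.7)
The plan is to reduce the theorem to a move-by-move check via Cromwell's theorem: since any two grid diagrams of $K$ are related by a finite sequence of commutations and (de)stabilizations, it suffices to show that the isomorphism types of $\GHm(\G)$ and $\GHh(\G)$ are unchanged under each such move. Invariance under row and column commutations is already established by the previous proposition via the pentagon maps $\pentmap_1, \pentmap_2$ and the hexagon homotopies $\hexmap_1, \hexmap_2$, so the remaining content is invariance under stabilization.

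Before tackling stabilization, the preparatory step is to show that the $U_i$-actions on $\GHm(\G)$ coincide. Concretely, for each $X$-marker $X_i$ whose row contains $O_i$ and whose column contains $O_{i+1}$, I would define a homotopy $\phi \colon \GCm(\G) \to \GCm(\G)$ by counting empty rectangles that contain exactly $X_i$ in their interior. Repeating the boundary-degeneration analysis from the proof that $\bdym \circ \bdym = 0$---but now tracking the single blocked $X$-marker---yields
\[
  \bdym \circ \phi + \phi \circ \bdym = U_i + U_{i+1}.
\]
Chaining such homotopies around the diagram shows that all $U_i$ act chain-homotopically, so $\GHm(\G)$ carries an unambiguous $\bF[U]$-module structure. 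This is the step I expect to run most smoothly, as it is a direct analogue of the differential-squared argument.

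The main obstacle is invariance under (de)stabilization itself. My plan follows the mapping cone approach of \cite{OSS15:GH-book}: given a stabilized diagram $\G'$ of size $n+1$, I would exhibit a filtration or destabilization map relating $\GCm(\G')$ to the mapping cone of multiplication by $U_1 - U_2$ (or the analogous operator) on $\GCm(\G)$, and identify this mapping cone, as an $\bF[U]$-module, with $\GCm(\G)$ up to quasi-isomorphism. Concretely, one partitions $\genset(\G')$ according to whether the generator occupies the corner point introduced by the stabilization, builds two subcomplexes, and identifies the connecting differential with a chain map homotopic to $U_i + U_j$; the previous step ensures this acts as $0$ on $\bF[U]$-module homology. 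Combined with commutation invariance, this shows that the graded $\bF[U]$-module $\GHm(\G)$ depends only on $K$, giving $\GHm(K)$.

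Finally, to deduce the statement for $\GHh$, I would set $U_1 = 0$: since the natural short exact sequence
\[
  0 \to \GCm(\G) \xrightarrow{U_1} \GCm(\G) \to \GCh(\G) \to 0
\]
expresses $\GCh(\G)$ as $\GCm(\G) \otimes_{\bF[U_1,\dotsc,U_n]} (\bF[U_1,\dotsc,U_n]/U_1)$, and since all $U_i$ are chain-homotopic on $\GCm(\G)$, the isomorphism type of $\GHh(\G)$ is determined by that of $\GHm(\G)$ together with the $U$-action, both of which are knot invariants by the preceding steps. This yields the invariant $\GHh(K)$ and completes the proof.
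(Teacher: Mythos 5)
Your route is exactly the paper's: reduce to Cromwell's moves, quote the commutation invariance already proved via the pentagon and hexagon maps, establish the homotopies $\bdym \circ \phi + \phi \circ \bdym = U_i + U_{i+1}$ so that all $U_i$ act identically on homology and $\GHm(\G)$ becomes an unambiguous $\bF[U]$-module, handle (de)stabilization by a mapping-cone argument as in \cite[Section~5.2]{OSS15:GH-book}, and obtain the hat flavor by setting one $U_i$ to zero. Like the paper, you leave the stabilization details to the reference, so at the level of outline the two arguments coincide.

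The one place where your justification, as written, would not work is the identification of the mapping cone with $\GCm(\G)$. You say the connecting map is homotopic to $U_i + U_j$ and that ``the previous step ensures this acts as $0$ on homology,'' suggesting that the null-homotopy is what collapses the cone. But the cone of a null-homotopic self-map $f \colon C \to C$ is quasi-isomorphic to $C \oplus C[1]$, which has \emph{twice} the homology; if that were the mechanism, you would conclude $\GHm(\G') \cong \GHm(\G) \oplus \GHm(\G)[1]$, which is not invariance. The actual argument identifies $\GCm(\G')$, a complex over a ring with $n+1$ variables, with the cone of $U_1 - U_2$ acting on $\GCm(\G) \otimes_{\bF} \bF[U_0]$, where $U_0$ is the newly introduced variable acting freely; on this free extension the map $U_1 - U_2$ (equivalently $U_0 - U_1$ after renaming) is \emph{injective}, so the cone is quasi-isomorphic to its cokernel, which is $\GCm(\G)$ up to the appropriate grading shifts. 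The homotopies making the $U_i$-actions agree play a different role: they show that the $\bF[U]$-module structure on $\GHm$ does not depend on which $U_i$ is chosen, which is precisely what allows you to compare homologies defined over polynomial rings of different sizes (and to set $U_1 = 0$ for the hat flavor without ambiguity). With that correction, and with the gradings tracked through the cone identification (the theorem asserts invariance of the \emph{graded} isomorphism type), your sketch matches the paper's proof.
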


\begin{remark}
  Instead of the homology, one may in fact state the invariance of the grid 
  complex (and, later, of the Heegaard Floer complex) on the chain level: The 
  \emph{graded chain homotopy type} of the chain complex is an invariant. This 
  is important, as many applications rely on invariants extracted from the 
  chain complex.
\end{remark}

In this lecture, we have chosen to exhibit in detail certain proofs while 
hiding others entirely. The point of showing the proof that $\bdym_{\GCm (\G)} 
\circ \bdym_{\GCm (\G)} = 0$ in detail is that it is structurally the same as 
the proof that $\bdym_{\CFm (\heeg)} \circ \bdym_{\CFm (\heeg)} = 0$ later, 
when we define the Heegaard Floer complex $\CFm (\heeg)$. There, the boundary 
homomorphism is defined by counting certain (pseudo)holomorphic bigons, and so 
the analysis becomes much more intricate; however, one will still consider a 
generator $\z$ that appears in $\bdym \circ \bdym (\x)$, juxtapose two 
holomorphic bigons (analogous to rectangles here), and prove that there is an 
even number of ways that the resulting holomorphic curve can be decomposed into 
two holomorphic bigons that are counted.

Similarly, the proof that $\GHm$ is invariant under commutation is structurally 
the same as the proof that $\wHFm$, the homology of $\CFm$, is invariant under 
handleslides. There, one also combines multiple Heegaard diagrams (analogous to 
grid digrams) into a single diagram, define chain maps by counting holomorphic 
triangles (analogous to pentagons here), and define chain homotopies by 
counting holomorphic quadrilaterials (analogous to hexagons here).

Limited by time, we will not be able to delve into the details of these proofs 
in later lectures. However, it is my hope that the analogies above will provide 
some insight into them.

\subsection{Maslov and Alexander gradings}

Next, we move on to the homological grading of grid homology, known as the 
\emph{Maslov grading}. This will allow us to discuss the graded isomorphism 
types of $\GHm (K)$. While the definitions of gradings on Heegaard Floer 
homology are somewhat different in nature, the hope is that this will help us 
gain some insight into those gradings also. We continue to follow \cite{OSS15:GH-book}.


\begin{definition}
  Let $A$ and $B$ be two finite sets in $\mathbb{R}^2$. We define the following quantities:
  \begin{gather*}
    \mathcal{I}(A,B) =\#  \{(a,b)\in A\times B\,\vert\, a=(a_1,a_2), \, 
    b=(b_1,b_2), \, a_1<b_1, \, a_2<b_2 \}, \\
    \mathcal{J}(A,B)=\frac{1}{2} (\mathcal{I}(A,B)+\mathcal{I}(B,A)).
  \end{gather*}
  In other words, $\mathcal{I} (A, B)$ counts the number of pairs of points 
  $(a, b) \in A \times B$ such that $a$ is strictly to the southwest of 
  $b$---such a pair is known as a \emph{northeast--southwest pair}---and 
  $\mathcal{J}$ symmetrizes $\mathcal{I}$.

  Given $\x \in \genset (\G)$, the \emph{Maslov grading} $M (\x) = M_{\OO} 
  (\x)$ of $\x$ is defined by
  \[
    M (\x) = M_{\OO} (\x) = \JJ (\x, \x) - 2 \JJ (\x, \OO) + \JJ (\OO, \OO) + 
    1.
  \]
%
\end{definition}

\begin{proposition}
  If there is a rectangle $r$ from $\x$ to $\y$, then
  \[
    M(\x)-M(\y)=1-2\#(r\cap\OO).
  \]
\end{proposition}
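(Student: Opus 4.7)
The plan is to expand $M(\x) - M(\y)$ from the definition and reduce everything to a local count at the four corners of $r$. Write $c = \x \cap \y$, let $p_1, p_3$ be the SW and NE corners of $r$ (both lying in $\x$), and let $p_2, p_4$ be the SE and NW corners (both in $\y$), so that $\x = c \sqcup \{p_1, p_3\}$ and $\y = c \sqcup \{p_2, p_4\}$. Since $\JJ(\OO, \OO)$ cancels, we have
\[
  M(\x) - M(\y) = \bigl[\JJ(\x, \x) - \JJ(\y, \y)\bigr] - 2\bigl[\JJ(\x, \OO) - \JJ(\y, \OO)\bigr].
\]

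The key local step is the following. For any point $q \in \R^2$ not on the sides of $r$, set
\begin{align*}
  a(q) &= \#\{p \in \{p_1, p_3\} : p \text{ SW of } q\} - \#\{p \in \{p_2, p_4\} : p \text{ SW of } q\}, \\
  b(q) &= \#\{p \in \{p_1, p_3\} : q \text{ SW of } p\} - \#\{p \in \{p_2, p_4\} : q \text{ SW of } p\}.
\end{align*}
A short case analysis over the nine regions cut out by the lines extending the sides of $r$ shows that $a(q) = b(q) = 1$ if $q$ lies in the open interior of $r$, and $a(q) = b(q) = 0$ otherwise. This case analysis is the main (if unexciting) obstacle; each region is a direct check using only the coordinates of the $p_i$, with the contribution from $\{p_1, p_3\}$ and the contribution from $\{p_2, p_4\}$ canceling outside of $r$.

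With the local lemma in hand, the rest is bookkeeping. Expanding $\II(\x, \x) - \II(\y, \y)$, the cross terms with $c$ equal $\sum_{q \in c} (a(q) + b(q))$, which vanishes because emptiness of $r$ forces $c$ to miss the interior of $r$. The only remaining contribution is the diagonal $\II(\{p_1, p_3\}, \{p_1, p_3\}) - \II(\{p_2, p_4\}, \{p_2, p_4\}) = 1 - 0 = 1$, since $p_1$ is SW of $p_3$ but neither of $p_2, p_4$ is SW of the other. Hence $\JJ(\x, \x) - \JJ(\y, \y) = 1$. Applying the local lemma with $Q = \OO$ in both $\II(\x, \OO) - \II(\y, \OO)$ and $\II(\OO, \x) - \II(\OO, \y)$ gives $\JJ(\x, \OO) - \JJ(\y, \OO) = \#(r \cap \OO)$. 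Substituting yields $M(\x) - M(\y) = 1 - 2\#(r \cap \OO)$. A mild subtlety worth flagging is that the grid is toroidal, so one should first pass to a fundamental domain in which $r$ embeds as an honest rectangle before applying the planar case analysis; this is built into the conventions of $\II$ and $\JJ$.
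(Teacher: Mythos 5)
Your proof is correct and is essentially the paper's argument made rigorous: the local count $a(q)=b(q)=1$ for $q$ inside $r$ and $0$ outside is exactly the paper's observation that $r$ ``destroys'' one northeast--southwest pair while leaving all $\JJ$-pairings with points outside $r$ unchanged, and applying the same local lemma to $q\in\OO$ produces the $\#(r\cap\OO)$ term, so the two proofs match step for step. One point worth flagging: you invoke emptiness of $r$ to kill the cross terms with $c=\x\cap\y$, although the proposition as stated allows arbitrary rectangles. Your instinct is sound, and it is the statement (and the paper's proof, which silently assumes ``$\x$ and $\y$ coincide everywhere else'') that is imprecise: for a general rectangle your computation gives $M(\x)-M(\y)=1-2\#(r\cap\OO)+2\#(\x\cap\mathrm{Int}(r))$, so the displayed formula holds exactly when no point of $\x$ lies in the interior of $r$. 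Since only empty rectangles are counted in $\bdyt$ and $\bdym$, nothing downstream is affected. Your toroidal caveat is likewise glossed over by the paper; to make it airtight one should note that $M$, defined via a fixed fundamental domain, is unchanged under cyclic permutation of rows and columns, which is a short separate check.
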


\begin{proof}
  If there are no $O$'s inside the rectangle $r$, then
  $\mathcal{J}(\x,\OO)=\mathcal{J}(\y,\OO)$ and
  $\mathcal{J}(\x,\x)-\mathcal{J}(\y,\y)=1$,
  because $r$ ``destroys'' a northeast--southwest pair in $\x \times \x$ to 
  create a northwest--southeast pair in $\y$, and $\x$ and $\y$
  coincide everywhere else.

  If $r\cap\OO\neq \emptyset$, then for each $O$ inside the rectangle
  $r$, both $\mathcal{I}(\x,\OO)$ and
  $\mathcal{I}(\OO,\x)$ are increased by $1$; thus,
  $\mathcal{J}(\x,\OO)-\mathcal{J}(\y,\OO)$ counts the number of $O$'s
  inside $r$.
\end{proof}

This implies that $M$ induces a homological $\mathbb{Z}$-grading on
$\widetilde{\GC}(\G)$, since $r \cap \OO = \emptyset$ in all relevant 
rectangles $r$ in the boundary homomorphism $\bdyt$.

For $\GC^-(\G)$, note that $\# (r \cap \OO)$ may be a positive integer $m$.  
This would mean that $U^m \y$ appears as a term in $\bdym (\x)$, where by $U^m$ 
we mean a product of $m$ distinct $U_i$'s. Since $M (\x) - M (\y) = 1 - 2 m$, 
if we set $M (U_i) = - 2$ for all $U_i$'s, then
\[
  M (U^m \y) = M (U^m) + M (\y) = - 2 m + (M (\x) - 1 + 2 m) = M (\x) - 1,
\]
and we have a homological $\Z$-grading on $\GCm (\G)$.


\begin{remark}
  The reason that we have a homological $\Z$-grading on $\GHm (K)$ is because, 
  as we shall see, it is a version of knot Floer homology in the three-sphere, 
  $\HFKm (S^3, K)$.  For a general $3$-manifold $Y$, neither $\wHFm (Y)$ nor 
  $\HFKm (Y, K)$ is necessarily $\Z$-graded; in fact, one may get different $\Z 
  / m$ gradings for different $\Spinc$-structures on $Y$. These gradings are 
  also not necessarily \emph{absolute}, meaning that they may 
  be well-defined 
  only up to a constant shift. We will briefly discuss the 
  $\Spinc$-decomposition and gradings in the last lecture.
\end{remark}

In fact, $\GCt (\G)$ and $\GCm (\G)$ are equipped with an internal grading 
$A$ (meaning that the boundary homomorphism preserves $A$), known as the 
\emph{Alexander grading}.

\begin{definition}
  Given $\x \in \genset (\G)$, the \emph{Alexander grading} $A (\x)$ of $\x$ is 
  defined by
  \[
    A (\x) = \frac{1}{2} (M_{\OO} (\x) - M_{\XX} (\x)) - \frac{n - 1}{2},
  \]
  where $M_{\XX}$ is defined in the same way as $M_{\OO}$ but with $\OO$ 
  replaced by $\XX$ in its definition.
\end{definition}
  
The existence of $A$ is a feature of knot Floer invariants (\eg $\CFKm$) and 
can be understood in terms of a ``relative'' $\Spinc$-decomposition. We will 
not discuss it further until the final lecture, but will point out that $A$ 
plays an important role in many applications; for example, see 
Remark~\ref{rmk:tau}.


\subsection{Graded isomorphism types}

From now until the final lecture, we will focus only on the Maslov grading and 
ignore the Alexander grading.

Let us now look at the possible graded isomorphism types that the various 
flavors of grid homology can have, starting with $\GHt (K, n)$ and $\GHh (K)$. 
Since they are finitely generated graded $\bF$-modules (that is to say, 
$\bF$--vector spaces), they must have graded isomorphism types
\[
  \bigoplus_{m\in\mathbb{Z}}\bF_{(m)}^{\oplus a_m} \quad \text{with} \quad \sum_m 
a_m <\infty,
\]
where $\bF_{(m)}$ denotes a summand in Maslov grading $m$.

%

As for $\GH^-$, it is an $\bF[U]$-module. Since $\bF[U]$ is a principal
ideal domain, the Fundamental Theorem for Finitely Generated Modules
over a Principal Ideal Domain implies that
$\GH^-(K)$ is a direct sum of cyclic modules, \ie it must have ungraded 
isomorphism type
\[
  \GH^-(K)\cong \bF[U]^{\oplus a}\oplus 
  \left(\faktor{\bF[U]}{(f_1(U))}\right)\oplus \dotsb \oplus 
  \left(\faktor{\bF[U]}{(f_k(U))}\right),
\]
where the $f_i$'s are polynomials.
Moreover, since $U$ strictly decreases grading,
the $f_i$'s must actually be monomials! Hence, we have
\[
  \GH^-(K)\cong \bF[U]^{\oplus a}\oplus 
  \left(\faktor{\bF[U]}{U^{b_1}}\right)\oplus \dotsb \oplus 
  \left(\faktor{\bF[U]}{U^{b_k}}\right) .
\]

\begin{remark}
  This last assertion is no longer true if the grading is a $\Z / m$-grading 
  instead of a $\Z$-grading, because $U^{m/2} \x$ can now have the same grading 
  as $\x$. This may occur for the Heegaard Floer homology of a general 
  $3$-manifold.
\end{remark}

One can, of course, write a graded version of this statement as well, which is 
left to the reader.

It turns out that, because $\GHm (K)$ is a version of $\HFKm (S^3, K)$, \ie the 
ambient $3$-manifold is $S^3$, there is only one $\bF [U]$-summand, meaning that 
$a = 1$.

\begin{remark}
  \label{rmk:tau}
  The invariant $\tau (K)$ is defined as $- A$, where $A$ is the (top) 
  Alexander grading of the generator of the $\bF [U]$-summand of $\HFKm (K)$.  $\tau$ provides a very useful concordance homomorphism.
\end{remark}

One feature that is often overlooked is that the graded isomorphism type of 
$\GHh (K)$ is completely determined by that of $\GHm (K)$. Indeed, note that
\[
  \widehat{\GC}(\G)\cong \GC^-(\bG)\otimes_{\bF[U]} 
  \left(\faktor{\bF[U]}{U}\right).
\]
Thus, the Universal Coefficient Theorem implies that
\[
\widehat{\GH}(K)\cong \GH^-(K)\otimes_{\bF[U]} \left(\faktor{\bF[U]}{U}\right)\oplus \Tor_1^{\bF[U]}\left(\GH^-(K),\faktor{\bF[U]}{U}\right).
\]


\begin{example}
    With some work, one can work out the \emph{minus} grid homology of the 
    left-handed trefoil to be
    \[
    \GH^-(\overline{T}_{2,3})\cong (\bF[U])_{(2)} \oplus \left( 
      \faktor{\bF[U]}{U} \right)_{(1)}, 
    \]
    and so we can also read off the \emph{hat} grid homology to be
    \begin{align*}
    \widehat{\GH}(\overline{T}_{2,3}) &\cong 
    \left(\faktor{\bF[U]}{U}\right)_{(2)}
      \oplus\left(\faktor{\bF[U]}{U}\right)_{(1)}\oplus\left(\faktor{\bF[U]}{U}\right)_{(0)}\\
      & \cong \bF_{(2)} \oplus \bF_{(1)} \oplus \bF_{(0)}.
    \end{align*}
\end{example}

\begin{exercise}
Compute $\Tor_1$ here. Bonus: Compute it with gradings.
\end{exercise}

\subsection{Other flavors}

Finally, we mention some more flavors of grid homology. First, we have the 
\emph{infinity} and \emph{plus} flavors.

\begin{definition}
  The \emph{infinity} and \emph{plus grid chain complexes} are defined 
  respectively by
  \begin{align*}
    \GC^\infty(\G)&=\GC^-(\G)\otimes_{\bF[U]}\bF[U,U^{-1}],\\
    \GC^+(\G)&=\GC^\infty (\G) 
    \otimes_{\bF[U]}\left(\faktor{\bF[U,U^{-1}]}{U}\right),
  \end{align*}
  both thought of as (bi)graded chain complexes of $\bF[U]$-modules.
\end{definition}

One can recover $\GCm (\G)$ from $\GCi (\G)$ as a subcomplex, and $\GCp (\G)$ 
from $\GCi (\G)$ as a quotient complex, and so these three flavors contain 
roughly the same amount of information.

\begin{exercise}
  Work out some obvious mapping cones among the \emph{minus}, \emph{plus}, 
  \emph{infinity}, and \emph{hat} flavors of grid chain complexes.
\end{exercise}

All of these flavors are considered in the literature, as it is often easier to 
formulate a certain statement in one flavor versus another.  They also all 
appear in general Heegaard Floer homology, as $\CFm (\heeg)$, $\CFp (\heeg)$, 
$\CFi (\heeg)$, and $\CFh (\heeg)$. In each of these cases, the graded 
isomorphism type of the homology (or in fact, the graded chain homotopy type of 
the chain complex) is an invariant.

\begin{remark}
  Again by the Universal Coefficient Theorem, one can compute
  \[
    \GH^\infty(K)\cong \bF[U,U^{-1}],
  \]
  and so the \emph{infinity} homology is not very interesting.  However, the 
  \emph{infinity} chain complex $\GCi (K)$ enjoys a rich structure, and many 
  invariants, such as $\epsilon (K)$ \cite{Hom14} and $\Upsilon (K)$ 
  \cite{OSS17:Upsilon}, can be extracted from it.
\end{remark}

There are more flavors that are worth noting:
\begin{itemize}
  \item If we allow rectangles to contain $X$'s but do not record them, the 
    Alexander grading is no longer an internal grading; instead, it can be 
    lowered by the boundary homomorphism, meaning that it becomes only a 
    \emph{filtration}.  (The Maslov grading remains a homological grading.) The 
    filtered, graded homotopy type of this filtered chain complex, denoted 
    $\mathcal{GC}^- (\G)$, is a knot invariant. To recover $\GCm (K)$, one 
    takes the \emph{associated graded object} of $\mathcal{GC}^- (K)$, which 
    exactly blocks all rectangles that contain $X$'s.
    The object in general Heegaard Floer theory analogous to $\mathcal{GC}^-(K)$ is $\CFKm (Y, K)$, which is a filtered, graded chain complex, and     $\HFKm (Y, K)$, which is analogous to $\GHm (K)$, is the homology of the  associated graded object of $\CFKm (Y, K)$.
  \item An approach alternative to filtered complexes is to introduce formal 
    variables $V_i$ for $X_i$, and define
    \[
      \bdym_{\OO\XX} (\x) = \sum_{\y \in \genset (\G)} \sum_{r \in \eRect (\x, 
        \y)} U_1^{O_1 (r)} \dotsm U_n^{O_n (r)} V_1^{X_1 (r)} \dotsm V_n^{X_n 
        (r)} \y,
    \]
    This is the approach taken in \cite{Zem19:HFK-func, Zem19:HFK-gr} and some 
    other recent work.  However, one must grapple with the fact that 
    $\bdym_{\OO\XX} \circ \bdym_{\OO\XX} \equiv c \neq 0$, where $c \in \wring$ 
    is the \emph{curvature} of a \emph{curved chain complex}.
  \item We can also set $U_1=\dots=U_n=V_1=\dots=V_n$ and consider a chain 
    complex with
    \[
      \partial^{'} (\x)=\sum_{\y\in \genset(\G)} \sum_{r\in \eRect(\x,\y)} 
      U^{O(r)+X(r)} \y.
    \]
    Since $O$'s and $X$'s are treated equally in the boundary homomorphism, we 
    lose the information of the orientation of the knot.
    This is called
    \textit{unoriented grid homology} \cite{OSS17:HF'}.
\end{itemize}

\section{``Naive'' Heegaard Floer homology}
\subsection{Definition}

Having caught a glimpse of some of the features of Heegaard Floer homology as 
exhibited in grid homology, the rest of this lecture will be devoted to 
discussing a ``naive'' definition of Heegaard Floer homology, where the 
boundary homomorphism counts domains on a Heegaard diagram, just as the grid 
boundary homomorphism counts rectangles on a grid diagram.  For someone 
unacquainted with Heegaard Floer homology attending a research talk on the 
subject, this may seem like what is going on when the speaker shades certain 
domains on a Heegaard diagram. However, this is not the actual definition of 
Heegaard Floer homology, as it may not even produce a legitimate chain complex. 
And yet, I think there is some merit in investigating this ``naive'' 
definition.

This exercise serves several purposes.  First, not yet burdened with the 
analysis involved in Morse or Floer theory, we can transfer some of the 
intuition we gained from grid homology to help us make sense of some 
definitions.  Second, by looking at some of the simplest examples, for which 
the ``naive'' definition seems to make sense, we can get a taste of some of the 
issues that must be addressed in the actual theory. Third, by looking at an 
example where $\bdy^2 \not\equiv 0$, we will get to see why the ``naive'' 
definition \emph{cannot} make sense.

We assume familiarity with Heegaard diagrams of $3$-manifolds. In Heegaard 
Floer homology, we consider \emph{pointed $3$-manifolds} $(Y, z)$, where $z \in 
Y$. Correspondingly, we consider \emph{pointed Heegaard diagrams} $\heeg = 
(\heegsurf, \alphas, \betas, z)$.  As usual, if $\heegsurf$ has genus $g$, then 
$\alphas$ is a set of $g$ mutually disjoint circles $\{ \alpha_1, \dotsc, 
\alpha_g \}$ on $\heegsurf$, and $\betas$ is a set of $g$ mutually disjoint 
circles $\{ \beta_1, \dotsc, \beta_g \}$. We often abuse notation and use 
$\alphas$ to denote $\alpha_1 \cup \dotsb \cup \alpha_g$ and $\betas$ to denote 
$\beta_1 \cup \dotsb \cup \beta_g$. We assume that $z$ lies on $\heegsurf$ away 
from all $\alpha$- and $\beta$-curves.

\begin{example}\label{examplenaiveHF}
  Figure~\ref{fig:two-HD-of-3sphere} shows two pointed Heegaard diagrams of 
  $(S^3, z)$, while Figure~\ref{fig:two-HD-of-s1xs2} shows two diagrams of 
  $(S^1 \times S^2, z)$. As is customary, $\alpha$-curves are colored red, 
  while $\beta$-curves are colored blue.
\begin{figure}[!htbp]
     \centering
     \begin{subfigure}[b]{0.4\textwidth}
     	\def\svgwidth{1\columnwidth}
     	\centering
     	%
\begingroup%
  \makeatletter%
  \providecommand\color[2][]{%
    \errmessage{(Inkscape) Color is used for the text in Inkscape, but the package 'color.sty' is not loaded}%
    \renewcommand\color[2][]{}%
  }%
  \providecommand\transparent[1]{%
    \errmessage{(Inkscape) Transparency is used (non-zero) for the text in Inkscape, but the package 'transparent.sty' is not loaded}%
    \renewcommand\transparent[1]{}%
  }%
  \providecommand\rotatebox[2]{#2}%
  \newcommand*\fsize{\dimexpr\f@size pt\relax}%
  \newcommand*\lineheight[1]{\fontsize{\fsize}{#1\fsize}\selectfont}%
  \ifx\svgwidth\undefined%
    \setlength{\unitlength}{294.84135111bp}%
    \ifx\svgscale\undefined%
      \relax%
    \else%
      \setlength{\unitlength}{\unitlength * \real{\svgscale}}%
    \fi%
  \else%
    \setlength{\unitlength}{\svgwidth}%
  \fi%
  \global\let\svgwidth\undefined%
  \global\let\svgscale\undefined%
  \makeatother%
  \begin{picture}(1,0.63401607)%
    \lineheight{1}%
    \setlength\tabcolsep{0pt}%
    \put(0,0){\includegraphics[width=\unitlength,page=1]{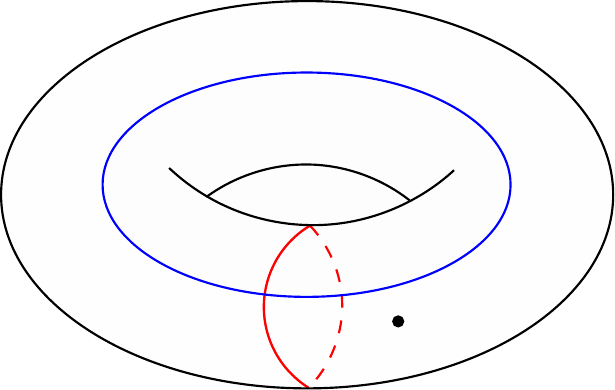}}%
    \put(0.66918602,0.0887051){\makebox(0,0)[lt]{\lineheight{1.25}\smash{\begin{tabular}[t]{l}$z$\end{tabular}}}}%
  \end{picture}%
\endgroup%

     	\caption{$\mathcal{H}_1$}
     	\label{fig:3sphere-a}
     \end{subfigure}
     \hfill
     \begin{subfigure}[b]{0.4\textwidth}
     	\def\svgwidth{1\columnwidth}
     	\centering
     	%
\begingroup%
  \makeatletter%
  \providecommand\color[2][]{%
    \errmessage{(Inkscape) Color is used for the text in Inkscape, but the package 'color.sty' is not loaded}%
    \renewcommand\color[2][]{}%
  }%
  \providecommand\transparent[1]{%
    \errmessage{(Inkscape) Transparency is used (non-zero) for the text in Inkscape, but the package 'transparent.sty' is not loaded}%
    \renewcommand\transparent[1]{}%
  }%
  \providecommand\rotatebox[2]{#2}%
  \newcommand*\fsize{\dimexpr\f@size pt\relax}%
  \newcommand*\lineheight[1]{\fontsize{\fsize}{#1\fsize}\selectfont}%
  \ifx\svgwidth\undefined%
    \setlength{\unitlength}{294.84135111bp}%
    \ifx\svgscale\undefined%
      \relax%
    \else%
      \setlength{\unitlength}{\unitlength * \real{\svgscale}}%
    \fi%
  \else%
    \setlength{\unitlength}{\svgwidth}%
  \fi%
  \global\let\svgwidth\undefined%
  \global\let\svgscale\undefined%
  \makeatother%
  \begin{picture}(1,0.63401607)%
    \lineheight{1}%
    \setlength\tabcolsep{0pt}%
    \put(0,0){\includegraphics[width=\unitlength,page=1]{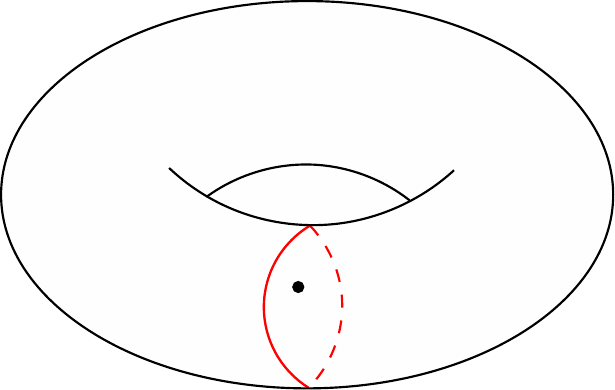}}%
    \put(0.50648304,0.14464655){\makebox(0,0)[lt]{\lineheight{1.25}\smash{\begin{tabular}[t]{l}$z$\end{tabular}}}}%
    \put(0,0){\includegraphics[width=\unitlength,page=2]{HD-S3-admissible.pdf}}%
    \put(0.42428382,0.22382654){\makebox(0,0)[lt]{\lineheight{1.25}\smash{\begin{tabular}[t]{l}$a$\end{tabular}}}}%
    \put(0.4014217,0.14250106){\makebox(0,0)[lt]{\lineheight{1.25}\smash{\begin{tabular}[t]{l}$b$\end{tabular}}}}%
    \put(0.4354265,0.01434384){\makebox(0,0)[lt]{\lineheight{1.25}\smash{\begin{tabular}[t]{l}$c$\end{tabular}}}}%
  \end{picture}%
\endgroup%

     	\caption{$\mathcal{H}_2$}
     	\label{fig:3sphere-b}
     \end{subfigure}
        \caption{Two Heegaard diagrams of $(S^3,z)$}
        \label{fig:two-HD-of-3sphere}
\end{figure}

\begin{figure}[!htbp]
     \centering
     \begin{subfigure}[b]{0.4\textwidth}
     	\def\svgwidth{1\columnwidth}
     	\centering
     	%
\begingroup%
  \makeatletter%
  \providecommand\color[2][]{%
    \errmessage{(Inkscape) Color is used for the text in Inkscape, but the package 'color.sty' is not loaded}%
    \renewcommand\color[2][]{}%
  }%
  \providecommand\transparent[1]{%
    \errmessage{(Inkscape) Transparency is used (non-zero) for the text in Inkscape, but the package 'transparent.sty' is not loaded}%
    \renewcommand\transparent[1]{}%
  }%
  \providecommand\rotatebox[2]{#2}%
  \newcommand*\fsize{\dimexpr\f@size pt\relax}%
  \newcommand*\lineheight[1]{\fontsize{\fsize}{#1\fsize}\selectfont}%
  \ifx\svgwidth\undefined%
    \setlength{\unitlength}{294.84135111bp}%
    \ifx\svgscale\undefined%
      \relax%
    \else%
      \setlength{\unitlength}{\unitlength * \real{\svgscale}}%
    \fi%
  \else%
    \setlength{\unitlength}{\svgwidth}%
  \fi%
  \global\let\svgwidth\undefined%
  \global\let\svgscale\undefined%
  \makeatother%
  \begin{picture}(1,0.63493239)%
    \lineheight{1}%
    \setlength\tabcolsep{0pt}%
    \put(0,0){\includegraphics[width=\unitlength,page=1]{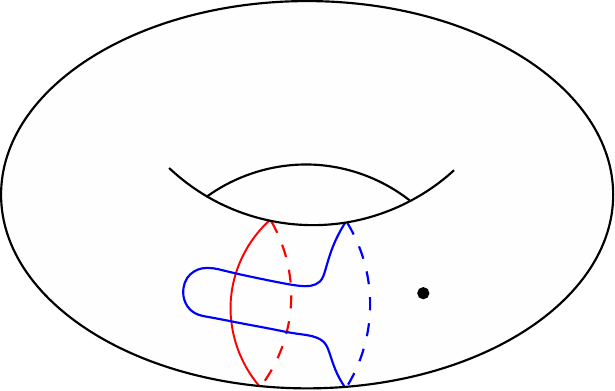}}%
    \put(0.71004713,0.13552624){\makebox(0,0)[lt]{\lineheight{1.25}\smash{\begin{tabular}[t]{l}$z$\end{tabular}}}}%
    \put(0,0){\includegraphics[width=\unitlength,page=2]{HD-S1xS2-a.pdf}}%
    \put(0.36214979,0.20355086){\makebox(0,0)[lt]{\lineheight{1.25}\smash{\begin{tabular}[t]{l}$a$\\\end{tabular}}}}%
    \put(0.34536238,0.0724779){\makebox(0,0)[lt]{\lineheight{1.25}\smash{\begin{tabular}[t]{l}$b$\end{tabular}}}}%
  \end{picture}%
\endgroup%

     	\caption{$\mathcal{H}_3$}
     	\label{fig:s1xs2-a}
     \end{subfigure}
     \hfill
     \begin{subfigure}[b]{0.4\textwidth}
     	\def\svgwidth{1\columnwidth}
     	\centering
     	%
\begingroup%
  \makeatletter%
  \providecommand\color[2][]{%
    \errmessage{(Inkscape) Color is used for the text in Inkscape, but the package 'color.sty' is not loaded}%
    \renewcommand\color[2][]{}%
  }%
  \providecommand\transparent[1]{%
    \errmessage{(Inkscape) Transparency is used (non-zero) for the text in Inkscape, but the package 'transparent.sty' is not loaded}%
    \renewcommand\transparent[1]{}%
  }%
  \providecommand\rotatebox[2]{#2}%
  \newcommand*\fsize{\dimexpr\f@size pt\relax}%
  \newcommand*\lineheight[1]{\fontsize{\fsize}{#1\fsize}\selectfont}%
  \ifx\svgwidth\undefined%
    \setlength{\unitlength}{294.84135111bp}%
    \ifx\svgscale\undefined%
      \relax%
    \else%
      \setlength{\unitlength}{\unitlength * \real{\svgscale}}%
    \fi%
  \else%
    \setlength{\unitlength}{\svgwidth}%
  \fi%
  \global\let\svgwidth\undefined%
  \global\let\svgscale\undefined%
  \makeatother%
  \begin{picture}(1,0.63493239)%
    \lineheight{1}%
    \setlength\tabcolsep{0pt}%
    \put(0,0){\includegraphics[width=\unitlength,page=1]{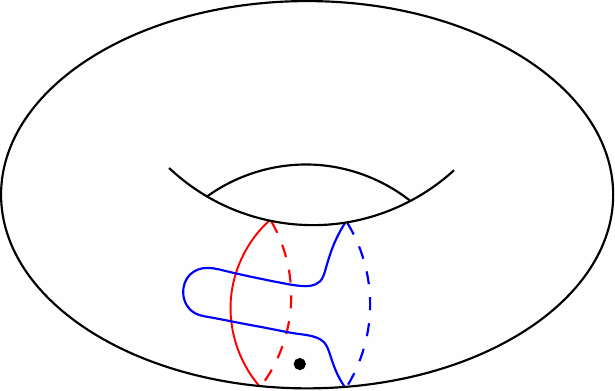}}%
    \put(0.50892149,0.02019926){\makebox(0,0)[lt]{\lineheight{1.25}\smash{\begin{tabular}[t]{l}$z$\end{tabular}}}}%
    \put(0,0){\includegraphics[width=\unitlength,page=2]{HD-S1xS2-b.pdf}}%
    \put(0.36214979,0.20355086){\makebox(0,0)[lt]{\lineheight{1.25}\smash{\begin{tabular}[t]{l}$a$\end{tabular}}}}%
    \put(0.34536238,0.0724779){\makebox(0,0)[lt]{\lineheight{1.25}\smash{\begin{tabular}[t]{l}$b$\end{tabular}}}}%
  \end{picture}%
\endgroup%

     	\caption{$\mathcal{H}_4$}
     	\label{fig:s1xs2-b}
     \end{subfigure}
        \caption{Two Heegaard diagrams of $(S^1\times S^2,z)$}
        \label{fig:two-HD-of-s1xs2}
\end{figure}
\end{example}



Inspired by the grid chain complexes, here is a ``naive'' definition of the 
Heegaard Floer chain complex associated to a pointed Heegaard diagram.

\begin{definition}
  Given a pointed Heegaard diagram $\heeg = (\heegsurf, \alphas, \betas, z)$, we 
  define the generating set
  \[
    \genset (\heeg)=\{\text{matchings between $\alphas$ and $\betas$ in } 
    \heeg\},
  \]
  where a matching is a one-to-one correspondence between the $\alpha$- and 
  $\beta$-circles.

  We define the \emph{hat} and \emph{minus} \emph{``naive'' Heegaard Floer 
    chain complexes} respectively as the $\bF$-module and the $\bF [U]$-module
  \[
    \CFh (\heeg) = \bF \langle \genset (\heeg) \rangle, \qquad
    \CFm (\heeg) = \bF[U] \langle \genset (\heeg) \rangle,
  \]
  with boundary homomorphisms $\bdyh \colon \CFh (\heeg) \to \CFh (\heeg)$ and 
  $\bdym \colon \CFm (\heeg) \to \CFm (\heeg)$ given by
  \[
    \bdyh (\x) = \sum_{\y \in \genset (\heeg)} \sum_{\substack{p \in \pi_2 (\x, 
        \y) \\ p \cap \{z\} = \emptyset}} \y, \qquad
    \bdym (\x) = \sum_{\y \in \genset (\heeg)} \sum_{p \in \pi_2 (\x, \y)} 
    U^{n_z (p)} \y,
  \]
  where $n_z (p)$ is the multiplicity of $z$ in $p$, and
  $\pi_2 (\x, \y)$ denotes the space of domains from $\x$ to $\y$.  Here, a 
  \emph{region} is a connected component of $\heegsurf \setminus (\alphas \cup 
  \betas)$, and a \emph{domain $p$ from $\x$ to $\y$} is a $2$-chain on 
  $\heegsurf$ that is a linear combination of regions with non-negative 
  coefficients, such that
  \[
    \bdy ((\bdy p) \cap \alphas) = \y - \x
  \]
  in the induced orientation on $\bdy p$.
\end{definition}
  

Something seems immediately suspicious in this definition. Indeed, if we 
concatenate a domain from $\x$ to $\y$ and a domain from $\y$ to $\z$, we will 
get a domain from $\x$ to $\z$; but clearly we do not want a scenario where 
$\y$ appears as a term in $\bdy (\x)$, and $\z$ appears as a term in both $\bdy 
(\y)$ and $\bdy (\x)$. In grid homology, concatenating two rectangles always 
results in a domain that is not a rectangle; here, we need to instead rely on a 
\emph{Maslov index} on elements of $\pi_2 (\x, \y)$. We will not go into 
details here, except that such an index exists and can be computed 
\cite[Corollary~4.10]{Lip06}, and gives rise to the homological \emph{Maslov 
  grading} on our chain complex. The idea, then, is that we will only count 
domains that are of index $1$; concatenating two such domains will give a 
domain of index $2$, which is not counted. Applying this idea back to the grid 
chain complex, this would eliminate
\begin{itemize}
  \item domains that have reflex angles in them; and
  \item domains that contain points in $\x$ or $\y$ in them.
\end{itemize}
This is why only empty rectangles are counted in grid homology. In any case, this 
represents the first modification we would like to make.

When $g (\heegsurf) = 1$, the condition that a domain must be of Maslov index 
$1$ will imply that it must in fact be an immersed bigon with non-reflex angles 
at its corners.  (A bigon is a disk with two corners.)

\begin{example}
  We can compute the chain complexes associated to the two diagrams of $(S^3, 
  z)$ in Example~\ref{examplenaiveHF}. For $\heeg_1$, let $x$ be the unique 
  intersection point of the $\alpha$ and $\beta$ curves. Since there are no 
  domains, it follows that
  \begin{align*}
    \widehat{\CF}(\mathcal{H}_1) &\cong\bF\langle x\rangle, & \CF^-(\mathcal{H}_1) 
    &\cong\bF[U]\langle x\rangle, &
    \widehat{\partial} &\equiv 0 \equiv \partial^-,\\
    \HFh (\heeg_1) & \cong \bF, & \wHFm (\heeg_1) & \cong \bF [U].
  \end{align*}
  For $\heeg_2$, labeling the intersection points by $a$, $b$, and $c$ as in 
  Figure~\ref{fig:3sphere-b}, there are two domains, one from $a$ to $b$ 
  containing $z$, and one from $c$ to $b$ not containing $z$. Thus,
  \begin{align*}
    & \widehat{\CF}(\mathcal{H}_2) \cong \bF \langle a, b, c \rangle, & &
    \CF^-(\mathcal{H}_2) \cong \bF [U] \langle a, b, c \rangle,\\
    & \bdyh (a) = \bdyh (b) = 0, \quad \bdyh (c) = b, \quad & &
    \bdym (a) = U b, \quad \bdym (b) = 0, \quad \bdym (c) = b,\\
    & \HFh (\heeg_2) \cong \bF \langle a, b \rangle / (b) \cong \bF, & &
    \wHFm (\heeg_2) \cong \bF [U] \langle a + U c, b \rangle / (b) \cong \bF [U].
  \end{align*}
\end{example}

\subsection{Admissibility}

So far, we have not run into any major problems. Let us look an example that 
seems more mysterious:

\begin{example}
  \label{exp:s1-s2}
  We now look at the two diagrams of $(S^1\times S^2, z)$ in 
  Example~\ref{examplenaiveHF}.  One can compute that
  \[
    \widehat{\wHF}(\mathcal{H}_3)\cong \bF\oplus\bF, \qquad  
    \widehat{\wHF}(\mathcal{H}_4)\cong 0,
  \]
  and
  \[
    \wHF^-(\mathcal{H}_3)\cong \bF[U]\oplus\bF[U], \qquad
    \wHF^-(\mathcal{H}_4)\cong \faktor{\bF[U]}{(1+U)}.
  \]
  This seems to suggest that $\HFh$ and $\wHFm$ are not invariants of 
  $3$-manifolds! One may naturally suspect that the culprit is the ``naive'' 
  definition, but in fact, it is not---we are indeed computing the correct 
  chain complexes here.

  So what gives? It turns out that this has to do with a requirement on the 
  Heegaard diagrams called \emph{admissibility}, a condition that guarantees 
  the diagram to be suitable for computation.  In other words, we cannot simply 
  choose any Heegaard diagram and expect it to give us the correct answer. This 
  is closely related to the $\Spinc$-decomposition of Heegaard Floer homology, 
  and we will briefly touch upon this issue again in the last lecture.
%
\end{example}

\subsection{$\bdy^2 \equiv 0$}

There is, in fact, a more serious problem that has been lurking in our 
``naive'' approach to the Heegaard Floer complex: We have not proved $\bdy^2 
\equiv 0$ for either the \emph{hat} or the \emph{minus} flavor; in other words, 
we do not know that we have defined chain complexes.

This turns out to be a real problem. See Figure~\ref{fig:d-squared-not-zero};  
the shaded domain has three decompositions into domains, 
each of which has the correct index. This means that $\z$ (in green) appears as 
a term in $\bdyh \circ \bdyh (\x)$ that is not canceled by any other term.

\begin{figure}[!htbp]
	\def\svgwidth{0.4\columnwidth}
	\centering
	\import{./images/}{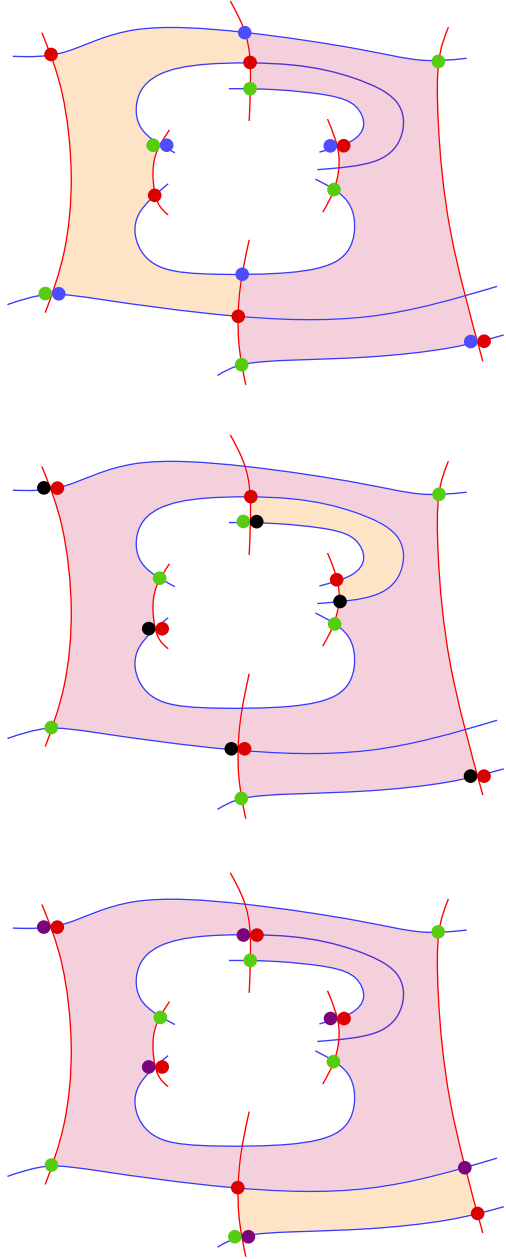}

  \caption{A composite domain that has three decompositions into domains that 
    are counted in the differential}
	\label{fig:d-squared-not-zero}
\end{figure}

This illustrates a key point, upon understanding which, we graduate from the 
``naive'' approach: The Heegaard Floer chain complex is not supposed to be 
defined by counting polygons on a Heegaard diagram. Instead, it is defined by 
counting (pseudo)holomorphic curves on a space \emph{associated} to the 
Heegaard diagram, distinct from the Heegaard diagram itself unless $g 
(\heegsurf) = 1$.  One can think of the domains we have been counting as 
``shadows'' of such holomorphic curves; given a domain of the right index, 
there may be $0$, $1$, or any number of holomorphic curves with that shadow. It 
is the count of holomorphic curves, rather than the count of domains, that 
satisfies $\bdy^2 \equiv 0$.

\section{Lagrangian Floer homology}
\subsection{Setup}

So now we understand that the boundary homomorphism in the Heegaard Floer chain 
complex counts certain holomorphic curves in some yet mysterious space. One of 
the questions most often asked is ``why'': What is the motivation behind the 
Heegaard Floer complex? In this lecture, we attempt to address this by 
providing an introduction to Lagrangian Floer homology; in short, Heegaard 
Floer homology can be defined via Lagrangian Floer homology, which in turn can 
be understood as a kind of infinite-dimensional Morse homology.

\emph{Lagrangian intersection Floer homology} (or simply \emph{Lagrangian Floer 
  homology}) is an invariant of $\left(M,\omega,
L_0,L_1\right)$, where $(M,\omega)$ is a symplectic manifold, and
$L_0$, $L_1$ are two transversely intersecting Lagrangian submanifolds of $(M, 
\omega )$.
The following is a quick survey that mostly follows the exposition in
\cite{Ped18}.  We will make whichever assumptions we need to simplify the
exposition, regardless of whether they hold in Heegaard
Floer theory. First, we will assume that $\pi_2(M)=0$---or less restrictively,
\begin{equation}
  \label{eqn:pi-2}
  \int_{S^2} f^* \omega=0
\end{equation}
for all $\left[f\right]\in\pi_2(M)$---and
$\pi_1(L_i)=0$ for each $i=0,1$.

The idea is to perform ``Morse homology'' on an infinite-dimensional
manifold associated to $(M, \omega, L_0, L_1)$ defined as follows.
Consider first
\[
\mathcal{P}=\mathcal{P}(L_0,L_1)=\{\gamma \colon  [0,1] \to M\,\vert\, 
\gamma\text{ is smooth}, \, \gamma(0)\in L_0, \, \gamma(1)\in L_1\},
\]
the set of smooth paths from $L_0$ to $L_1$, with the $\mathcal{C}^\infty$ 
topology. Note that $\mathcal{P}$ is not necessarily connected.
Choose an element $\widehat{\gamma}\in\mathcal{P}$ (and by doing so, a
connected component of $\mathcal{P}$), and consider the universal
cover $\widetilde{\mathcal{P}}$ based at $\widehat{\gamma}$.

This means that we get a set of paths of paths:
\[
  \begin{split}
    \widetilde{\mathcal{P}} = \widetilde{\mathcal{P}} 
    (L_0,L_1;\widehat{\gamma}) & =\{(\gamma,w)\,\vert\,
    \gamma\in\mathcal{P}, w \text{ is a path from } \widehat{\gamma}
    \text{ to } \gamma, \textit{ i.e.}\\
    & \qquad w\colon [0,1]\times [0,1]\to M
    \text{ with } w(\blank,r)\in\mathcal{P} \text{ for all } r\in
    [0,1], \\
    & \qquad w(s,0)=\widehat{\gamma}(s), \, w(s,1)=\gamma(s) \text{ for all } s 
    \in [0, 1]\}/\sim,
  \end{split}
  \]
where $(\gamma,w)\sim(\gamma,w')$ if $w,w'$ are homotopic relative to $(\widehat{\gamma},\gamma)$.

\begin{figure}[!htbp]
	\def\svgwidth{0.4\columnwidth}
	\centering
	%
\begingroup%
  \makeatletter%
  \providecommand\color[2][]{%
    \errmessage{(Inkscape) Color is used for the text in Inkscape, but the package 'color.sty' is not loaded}%
    \renewcommand\color[2][]{}%
  }%
  \providecommand\transparent[1]{%
    \errmessage{(Inkscape) Transparency is used (non-zero) for the text in Inkscape, but the package 'transparent.sty' is not loaded}%
    \renewcommand\transparent[1]{}%
  }%
  \providecommand\rotatebox[2]{#2}%
  \newcommand*\fsize{\dimexpr\f@size pt\relax}%
  \newcommand*\lineheight[1]{\fontsize{\fsize}{#1\fsize}\selectfont}%
  \ifx\svgwidth\undefined%
    \setlength{\unitlength}{137.40475423bp}%
    \ifx\svgscale\undefined%
      \relax%
    \else%
      \setlength{\unitlength}{\unitlength * \real{\svgscale}}%
    \fi%
  \else%
    \setlength{\unitlength}{\svgwidth}%
  \fi%
  \global\let\svgwidth\undefined%
  \global\let\svgscale\undefined%
  \makeatother%
  \begin{picture}(1,0.85276774)%
    \lineheight{1}%
    \setlength\tabcolsep{0pt}%
    \put(0,0){\includegraphics[width=\unitlength,page=1]{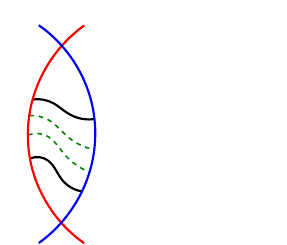}}%
    \put(0.30729825,0.78620599){\color[rgb]{1,0,0}\makebox(0,0)[lt]{\lineheight{1.25}\smash{\begin{tabular}[t]{l}$L_0$\end{tabular}}}}%
    \put(0.02719775,0.78756638){\color[rgb]{0,0,1}\makebox(0,0)[lt]{\lineheight{1.25}\smash{\begin{tabular}[t]{l}$L_1$\end{tabular}}}}%
    \put(0,0){\includegraphics[width=\unitlength,page=2]{L0intL1-homotopy.pdf}}%
    \put(0.18619835,0.51618954){\color[rgb]{0,0,0}\makebox(0,0)[lt]{\lineheight{1.25}\smash{\begin{tabular}[t]{l}$\widehat{\gamma}$\end{tabular}}}}%
    \put(0.17949716,0.15156702){\color[rgb]{0,0,0}\makebox(0,0)[lt]{\lineheight{1.25}\smash{\begin{tabular}[t]{l}$\gamma$\end{tabular}}}}%
    \put(-0.00398003,0.38110159){\color[rgb]{0,0.50196078,0}\makebox(0,0)[lt]{\lineheight{1.25}\smash{\begin{tabular}[t]{l}$r$\end{tabular}}}}%
    \put(0.34088857,0.31176302){\color[rgb]{0,0.50196078,0}\makebox(0,0)[lt]{\lineheight{1.25}\smash{\begin{tabular}[t]{l}$w$\end{tabular}}}}%
  \end{picture}%
\endgroup%

	\caption{A path $w$ from $\widehat{\gamma}$ to $\gamma$, where each value 
		of $r$ gives a path $w (\blank, r)$ from $L_0$ to $L_1$}
	\label{fig:intersection-L0-L1}
\end{figure}

It is on this space that we will apply ``Morse homology''.

\subsection{Morse homology: An interlude}

We briefly recall here the construction of Morse homology. Let $M$ be a closed, 
smooth manifold. Recall that a smooth function $f \colon M \to \mathbb{R}$ is 
\emph{Morse} if at each of its critical points (where in local coordinates all 
first derivatives $\partial f / \partial x_i$ vanish), the Hessian matrix $H = 
(\partial^2 f / \partial x_i \partial x_j)$ is not degenerate (\ie has non-zero 
determinant). By the Morse Lemma, all such critical points must be isolated.  
The \emph{Morse index} $\Ind (x)$ of a non-degenerate critical point $x$ is the 
dimension of the largest subspace of the tangent space that is negative 
definite.  Classical Morse theory says that Morse functions are generic, and 
that, given a Morse function $f$, $M$ is homotopy equivalent to a CW complex 
with an $n$-cell for each critical point of $f$ of index $n$.

In fact, one can even reconstruct a CW chain complex of $M$ using $f$ and 
auxiliary data. Since the critical points of $f$ correspond to cells in a CW 
complex, let $\CM (M; f)$ be the $\bF$-module generated by the critical points 
of $f$. To define a boundary homomorphism
the idea is to count the number of negative gradient flow lines from a critical 
point of index $n$ to one of index $n - 1$. The notion of the gradient requires 
the choice of a Riemannian metric $g$ on $M$, meaning that we will have a chain 
complex $\CM (M; f, g) = \CM (M; f)$, with boundary homomorphism $\bdy \colon 
\CM (M; f, g) \to \CM (M; f, g)$.  Concretely, we consider the moduli space 
$\moduli_g (x, y)$ of curves $u \colon \mathbb{R} \to M$ satisfying
\begin{itemize}
  \item $\displaystyle \frac{du}{dt} = - \operatorname{grad} f$;
  \item $\lim_{t \to - \infty} u (t) = x$; and
  \item $\lim_{t \to \infty} u (t) = y$.
\end{itemize}
To ensure that $\moduli_g (x, y)$ is a manifold of dimension $\Ind (x) - 
\Ind (y)$, the pair $(f, g)$ needs to satisfy the \emph{Morse--Smale} condition, 
which is luckily also generic. (The desired property that $\moduli_g (x, y)$ is 
be a manifold of the right dimension, is known as \emph{transversality}.) There 
is a natural action of $\R$ on $\moduli_g (x, y)$ by shifting $t \in \R$; to 
consider flow lines rather than flows, one takes the quotient $\moduli_g (x, y) 
/ \R$, which has dimension $\Ind (x) - \Ind (y) - 1$.  The manifold $\moduli_g 
(x, y) / \R$ can be compactified to a compact manifold $\widehat{\moduli}_g (x, 
y)$ with boundary by including the limits of sequences of elements, which are 
broken gradient flow lines.  (The desired property that 
$\moduli_g (x, y)$ can be compactified to a compact manifold with boundary is 
known as \emph{compactness}.)
When the index difference between $x$ and $y$ is $1$, the moduli space 
$\widehat{\moduli}_g (x, y)$ is a compact manifold of dimension $0$.  Thus,
one can define the matrix element $\langle \bdy x, y \rangle$ as follows: If 
$\Ind (x) - \Ind (y) = 1$, it is the number of elements in $\widehat{\moduli}_g 
(x, y)$ modulo $2$; otherwise, it is $0$.

To show that $\bdy^2 \equiv 0$, we consider $\widehat{\moduli}_g (x, z)$ where 
$\Ind (x) - \Ind (z) = 2$. This is a compact $1$-dimensional manifold with 
boundary, and so there is an even number of boundary points; since these 
correspond to broken flow lines from $x$ to $z$ that appear in $\bdy \circ \bdy 
(x)$, this means that all terms cancel in pairs.

The main result is then that the \emph{Morse homology}
\[
  \HM (M; f, g) = H_* (\CM (M; f, g), \bdy)
\]
is isomorphic to the CW or singular homology $H_* (M; \Z / 2)$ of $M$. One 
could deduce from this that $\HM (M; f, g)$ is invariant under a different 
choice of $f$ or $g$. Absent the isomorphism with singular homology, one would 
need to define chain homotopy equivalences between $\CM (M; f, g)$ and $\CM (M; 
f', g')$. This is important, since in the context of Floer homology, there is 
often no classical invariant to rely on, and this is the only way to prove 
invariance.

\subsection{The differential}

Turning back to Lagrangian Floer homology, we would like to follow the
steps above in this setting. The role of the manifold $M$ will be played
by the space $\widetilde{\mathcal{P}}$; what we need now is a
``Morse function''.

\begin{definition}
The \textit{action functional} $\mathcal{A} \colon \widetilde{\mathcal{P}} \to 
\mathbb{R}$ is defined by
\[
\mathcal{A}\left([\gamma,w]\right)= \int_{[0,1]^2} w^*\omega.
\]
\end{definition}
(Here and below, we abbreviate $[(\gamma, w)]$ to $[\gamma, w]$.) We need to 
check that it does not depend on the choice of $w$, \ie that 
we have
\begin{equation}
  \label{eqn:choice-of-w}
  (\gamma,w)\sim(\gamma,w') \implies 
  \mathcal{A}([\gamma,w])=\mathcal{A}([\gamma,w']) .
\end{equation}
This follows from Equation \eqref{eqn:pi-2}.

\begin{exercise}
  Verify Equation \eqref{eqn:choice-of-w}.
\end{exercise}

In local coordinates,
we have
\[
  w^* \omega \left(\frac{\partial}{\partial s}, \frac{\partial}{\partial 
      r}\right) = \omega \left(\frac{\partial w}{\partial s}, \frac{\partial 
      w}{\partial r}\right),
\]
and so
\[
\mathcal{A}\left([\gamma,w]\right)=\int_0^1 \int_0^1 \omega 
\left(\frac{\partial w}{\partial s}, \frac{\partial w}{\partial r}\right) \, dr 
\, ds.
\]
Now, we want to run ``Morse homology'' on
$(\widetilde{\mathcal{P}},\mathcal{A})$, which is structurally similar
to the finite-dimensional case but has a lot of technicalities that
come with infinite-dimensional spaces.

Let us compute $\grad \mathcal{A}$. We need a Riemannian metric on
$\widetilde{\mathcal{P}}$, which we will get by integrating a Riemannian
metric on $M$.
This is possible because
a tangent vector $\xi\in T_{[\gamma,w]}\widetilde{\mathcal{P}}$ can be 
regarded,
roughly, as an infinitesimal change to $\gamma (s)$
for every $s \in [0, 1]$; in other words, it can be viewed as a collection of 
tangent vectors
\(
\xi(s)\in T_{\gamma(s)} M.
\)

We will find a natural metric on $M$ using $\omega$ and a
further choice. 
Recall that the following structures are related.
\[\begin{tikzcd}
    \text{Symplectic structure } \omega && \begin{tabular}{c} Almost complex\\ structure $J$ \end{tabular} \\
	& \begin{tabular}{c} Riemannian\\ structure $g$\end{tabular}
	\arrow[no head, from=1-1, to=1-3]
	\arrow[no head, from=1-3, to=2-2]
	\arrow[no head, from=2-2, to=1-1]
\end{tikzcd}\]
Here, 
a choice of any two of these structures that are compatible determines the 
third one.  This is
related to the structure groups of these
structures, which are subgroups of $GL_{2n} (\mathbb{R})$.
\[\begin{tikzcd}
	{Sp(2n)} && {GL_n (\mathbb{C})} \\
	& {SO(2n)}
	\arrow[no head, from=1-1, to=1-3]
	\arrow[no head, from=1-3, to=2-2]
	\arrow[no head, from=2-2, to=1-1]
\end{tikzcd}\]
The intersection of any two of these is $U(n)$.
The notion of compatibility mentioned above is specified in the following 
definition.

\begin{definition}
    An almost complex structure $J$ is \emph{compatible} with $\omega$ if
    \begin{align}
      \label{eqn:(1)}
      \omega (J v,J {v'})& =\omega(v,v') & &\text{for all } v,v'\in T_p M,\\
      \label{eqn:(2)}
      \omega (v,Jv)& >0 & &\text{for all } v\neq 0 \in T_p M.
    \end{align}
\end{definition}

It can be shown that compatible almost complex structures
on symplectic manifolds are abundant.

Given $\omega$ and a compatible $J$, we can now define
the metric $g_J$ by the formula
\begin{equation}
  \label{eqn:g_J}
  g_J(v,v') = \omega(v,Jv').
\end{equation}

\begin{exercise}
Check that $g_J$ is a Riemannian structure.
\end{exercise}

With a metric $g_J$ on $M$, we can define a metric on
$\widetilde{\mathcal{P}}$ as follows:
Given $\xi_1,\xi_2 \in T_{ [\gamma,w]} \widetilde{\mathcal{P}}$,
we can define
\[
  \langle\xi_1,\xi_2 \rangle=\int_0^1 g_{J}(\xi_1(s),\xi_2(s)) \, 
  ds.
\]
But in fact, to allow sufficient flexibility to achieve genericity, we will 
allow the almost complex structure $J$ to vary for different values of $s$ in 
the integral. This means that we will make a choice of a family
\[
  \acsfamily = \{ J_s \}_{0 \leq s \leq 1}
\]
of $\omega$-compatible almost complex structures on $M$, and define
\begin{equation}
  \label{eqn:g_J-int}
  \langle\xi_1,\xi_2 \rangle=\int_0^1 g_{J_s}(\xi_1(s),\xi_2(s)) 
  \, ds.
\end{equation}

We now determine $\grad_{\langle\cdot,\cdot\rangle}
\mathcal{A}([\gamma,w])$, by first computing
\begin{align*}
    d\mathcal{A}_{[\gamma,w]} (\xi)&=\int_0^1 
    \omega\left(\frac{\partial\gamma}{\partial s},\xi(s)\right)\, ds\\
    &=\int_0^1 \omega\left(J_s\frac{\partial\gamma}{\partial s},J_s 
      \xi(s)\right)\, ds\\
    &=\int_0^1 g_{J_s}\left(J_s\frac{\partial\gamma}{\partial 
        s},\xi(s)\right)\, ds\\
    &=\left\langle J_s \frac{\partial\gamma}{\partial s},\xi \right\rangle,
\end{align*}
where the second, third, and fourth equalities follow from Equations \eqref{eqn:(1)}, 
\eqref{eqn:g_J}, and \eqref{eqn:g_J-int} respectively.
This means that
\[
  \grad_{\langle\cdot,\cdot\rangle} \mathcal{A}([\gamma,w])(s)=J_s 
  \frac{\partial\gamma}{\partial s},
\]
which is depicted in Figure~\ref{fig:intersection-L0-L1-grad}.
\begin{figure}[!htbp]
	\def\svgwidth{0.4\columnwidth}
	\centering
	%
\begingroup%
  \makeatletter%
  \providecommand\color[2][]{%
    \errmessage{(Inkscape) Color is used for the text in Inkscape, but the package 'color.sty' is not loaded}%
    \renewcommand\color[2][]{}%
  }%
  \providecommand\transparent[1]{%
    \errmessage{(Inkscape) Transparency is used (non-zero) for the text in Inkscape, but the package 'transparent.sty' is not loaded}%
    \renewcommand\transparent[1]{}%
  }%
  \providecommand\rotatebox[2]{#2}%
  \newcommand*\fsize{\dimexpr\f@size pt\relax}%
  \newcommand*\lineheight[1]{\fontsize{\fsize}{#1\fsize}\selectfont}%
  \ifx\svgwidth\undefined%
    \setlength{\unitlength}{139.84717933bp}%
    \ifx\svgscale\undefined%
      \relax%
    \else%
      \setlength{\unitlength}{\unitlength * \real{\svgscale}}%
    \fi%
  \else%
    \setlength{\unitlength}{\svgwidth}%
  \fi%
  \global\let\svgwidth\undefined%
  \global\let\svgscale\undefined%
  \makeatother%
  \begin{picture}(1,0.83787419)%
    \lineheight{1}%
    \setlength\tabcolsep{0pt}%
    \put(0,0){\includegraphics[width=\unitlength,page=1]{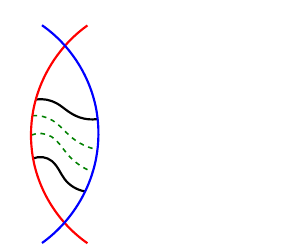}}%
    \put(0.3128268,0.77247493){\color[rgb]{1,0,0}\makebox(0,0)[lt]{\lineheight{1.25}\smash{\begin{tabular}[t]{l}$L_0$\end{tabular}}}}%
    \put(0.03761824,0.77381156){\color[rgb]{0,0,1}\makebox(0,0)[lt]{\lineheight{1.25}\smash{\begin{tabular}[t]{l}$L_1$\end{tabular}}}}%
    \put(0,0){\includegraphics[width=\unitlength,page=2]{L0intL1-homotopy2.pdf}}%
    \put(-0.00391052,0.37679573){\color[rgb]{0,0.50196078,0}\makebox(0,0)[lt]{\lineheight{1.25}\smash{\begin{tabular}[t]{l}$r$\end{tabular}}}}%
    \put(0.35399678,0.29706028){\color[rgb]{0,0.50196078,0}\makebox(0,0)[lt]{\lineheight{1.25}\smash{\begin{tabular}[t]{l}$w$\end{tabular}}}}%
    \put(0.20041136,0.50802686){\color[rgb]{0,0,0}\makebox(0,0)[lt]{\lineheight{1.25}\smash{\begin{tabular}[t]{l}$\widehat{\gamma}$\end{tabular}}}}%
    \put(0.1795753,0.16084564){\color[rgb]{0,0,0}\makebox(0,0)[lt]{\lineheight{1.25}\smash{\begin{tabular}[t]{l}$\gamma$\end{tabular}}}}%
  \end{picture}%
\endgroup%

  \caption{The gradient of $\mathcal{A}$ represented (in brown) as a collection 
    of tangent vectors ``rotated'' counterclockwise by $J_s$}
	\label{fig:intersection-L0-L1-grad}
\end{figure}

We will now determine the critical points of $\mathcal{A}$. Since the
almost complex structure $J_s$ is an automorphism of
$T_{\gamma(s)}M$, we have
\[
  \grad \mathcal{A}([\gamma,w])=0 \iff
  \frac{\partial\gamma}{\partial s}=0 \quad \text{for all } s\in[0,1],
\]
\ie $\gamma$ is constant. As $\gamma$ is a path from
$L_0$ to $L_1$, it must be a constant path with $\gamma(0)=\gamma(1)\in \Lint$.  
This means
that there is a bijection between the set of critical points of
$\mathcal{A}$ and the intersection points $\Lint$.

Next, we move on to gradient flows, which are maps $u\colon \mathbb{R}\to 
\widetilde{\mathcal{P}}$ such that

\begin{minipage}{.6\linewidth}
\begin{itemize}
    \item $\displaystyle \frac{du}{dt}=-\grad\mathcal{A}\implies 
      \frac{du}{dt}\bigg\rvert_{t=t_0}=-J_s \frac{\partial u(t_0)}{\partial 
        s}$,
    \item $\lim_{t\rightarrow -\infty} u(t)=x$,
    \item $\lim_{t\rightarrow +\infty} u(t)=y$.
\end{itemize}
\end{minipage}
\hfill
\begin{minipage}{.35\linewidth}
\centering
\includegraphics[width=.4\linewidth]{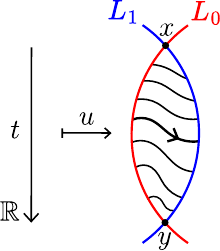}
\label{fig:flow-lines-Lagrangian}
\end{minipage}

Equivalently, we can regard $u$ as a function $u\colon [0,1]\times\mathbb{R} 
\to M$, with parameters $s \in [0, 1]$ and $t \in \R$, and translate the first 
condition above to the condition
\[
\frac{\partial u}{\partial t}+J_s \frac{\partial u}{\partial s}=0.
\]
Of course, this is nothing but the Cauchy--Riemann equation; in other words, 
$u$ is a \emph{pseudoholomorphic curve}.
It is also called a \emph{(pseudo)holomorphic disk} or a 
\emph{(psuedo)holomorphic bigon},
as $[0,1]\times \mathbb{R}$ is conformally equivalent
to $\mathbb{D}^2 \setminus \{i, -i \} \subset\mathbb{C}$, a disk with two 
corners (punctures) on its boundary.

Therefore, to do ``Morse homology'', we define a chain complex
\[
  \CF(M,\omega,L_0,L_1;\widehat{\gamma},\mathcal{J})=\bF\langle \Lint\rangle.
\]
Given $x,y\in \Lint$, consider the moduli spaces
\begin{multline*}
  \mathcal{M}_\mathcal{J}(x,y)=\left\{u\colon [0,1]\times \mathbb{R}\to M\, 
    \middle\vert\, \frac{\partial u}{\partial t}+J_s \frac{\partial u}{\partial 
      s}=0, \, u(\blank,t)=\gamma(\blank)\in\mathcal{P} \;\; \forall 
    t,\right.\\
  \left.\lim_{t\to +\infty}u(s,t)=x, \, \lim_{t\to -\infty}u(s,t)=y\;\; \forall 
    s \right\}
\end{multline*}
%
%
Often, we want one more ``finite energy'' condition:
\[
\int_{[0,1]\times\mathbb{R}} u^* \omega<\infty.
\]
Hopefully, we will have
\begin{enumerate}
  \item (Transversality) $\mathcal{M}_\mathcal{J}(x,y)$ is a manifold with an 
    $\R$-action, of dimension $\Ind (u)$ for some notion of \emph{index} $\Ind$ 
    of $u$; and
  \item (Compactness) $\mathcal{M}_\mathcal{J}(x,y) / \R$ can be compactified 
    to a manifold $\widehat{\moduli}_{\acsfamily} (x, y)$ with boundary.
\end{enumerate}
Then we can define the boundary homomorphism $\bdy \colon \CF (M, \omega, L_0, 
L_1; \widehat{\gamma}, \acsfamily) \to \CF (M, \omega, L_0, L_1; 
\widehat{\gamma}, \acsfamily)$ by
\[
\partial (x) =\sum\limits_{y\in\Lint} 
\#\widehat{\mathcal{M}}_\mathcal{J}^{\Ind(u)=1}(x,y) \cdot y.
\]
Transversality is necessary to ensure that the moduli space is $0$-dimensional 
and can be counted. Compactness is necessary to prove that $\bdy (x)$ is a 
finite sum. Both are necessary for proving $\bdy^2 \equiv 0$.

\subsection{Technicalities and additional structures}

Below, we discuss some technicalities that arise in the definitions above.

First, $\Ind(u)$ in fact depends on the class $B=[u]\in\pi_2(M,L_0\cup L_1)$.
Therefore, we need to write
\[
  \moduli_{\acsfamily} (x, y) = \bigsqcup_{B \in \pi_2 (M, \Lint)} 
  \moduli_{\acsfamily}^B (x, y).
\]
In other words, different components of $\moduli_{\acsfamily} (x, y)$ 
might be of 
different dimensions. We then really have
\[
\partial (x)=\sum\limits_{y\in\Lint} \sum\limits_{\substack{B\in\pi_2(M,L_0\cup 
    L_1) \\ \Ind(B)=1}} \#\widehat{\mathcal{M}}_\mathcal{J}^{B}(x,y) \cdot y
\]

Second, the issue of transversality is way more technical than in 
finite-dimensional
Morse homology, requiring one to do analysis on Sobolev spaces.
This is the step that requires us to consider
families of almost complex structures on $M$, rather than a single almost
complex structure, in our earlier definitions.
The lack of appropriate regularity and transversality results is the reason 
that some versions of Floer homology are not yet well defined.

Third, the desired compactness result is provided by the following, known as 
\emph{Gromov compactness}.

\begin{theorem}[Floer {\cite{Flo88}}, Oh {\cite{Oh93}}, \cf Gromov 
  {\cite{Gro85}}, \etal]
For a dense subset $\{\mathcal{J}\}$ of $1$-parameter families of almost 
complex structures on $M$, $\mathcal{M}_\mathcal{J}^B (x,y)/\mathbb{R}$ can be 
compactified to a compact manifold $\widehat{\mathcal{M}}_\mathcal{J}^B (x,y)$ 
with boundary, of dimension $\Ind(B)$.
\end{theorem}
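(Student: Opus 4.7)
The plan is to combine three pieces of analysis: (1) a Fredholm/Sard--Smale argument giving transversality for generic $\acsfamily$, (2) \emph{a priori} bounds coming from the action functional, and (3) a bubbling/breaking analysis that identifies the ends of the moduli space. All three are standard ingredients in pseudoholomorphic curve theory, so I will emphasize how they interact rather than develop each in detail.

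\textbf{Setting up the Fredholm problem.} First I would fix $B \in \pi_2(M, L_0 \cup L_1)$ and complete the space of maps $u \colon [0,1] \times \R \to M$ with the correct boundary and asymptotic conditions representing $B$ to a Banach manifold $\mathcal{B}^{1,p}_B(x,y)$ of class $W^{1,p}$ (with $p>2$ and an exponential weight near $\pm\infty$ to enforce the asymptotic convergence to $x$ and $y$). Over this, the Cauchy--Riemann operator $\bar\partial_{\acsfamily} u = \frac{\partial u}{\partial t} + J_s \frac{\partial u}{\partial s}$ defines a smooth section of a Banach bundle whose fiber is an $L^p$-space of $(0,1)$-forms. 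The linearization $D_u \bar\partial_{\acsfamily}$ is a Fredholm operator; its index equals the Maslov-type index $\Ind(B)$ attached to the class $B$, which is the desired dimension. This step is essentially algebraic/analytic bookkeeping and amounts to translating the definition of $\moduli_{\acsfamily}^B(x,y)$ as the zero set of a Fredholm section.

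\textbf{Transversality for generic $\acsfamily$.} Next I would consider the universal moduli space $\moduli_{\mathrm{univ}}^B(x,y) = \{(u, \acsfamily) : \bar\partial_{\acsfamily} u = 0\}$, parametrized over the Banach manifold of smooth $1$-parameter families of $\omega$-compatible almost complex structures. The key computation is that the linearization of the universal section, which now includes variations in $\acsfamily$, is surjective at every solution; this uses that $\acsfamily$ varies with $s$, which provides enough freedom to perturb in any direction transverse to $D_u \bar\partial_{\acsfamily}$. Then the Sard--Smale theorem for the projection to the parameter space gives a residual (hence dense) set of $\acsfamily$ for which $\moduli_{\acsfamily}^B(x,y)$ is a smooth manifold of dimension $\Ind(B)$. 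Translation in $t$ acts freely on nonconstant solutions, so the quotient has dimension $\Ind(B) - 1$; I will abuse the notation of the statement by interpreting ``dimension $\Ind(B)$'' as the dimension of $\moduli_{\acsfamily}^B$ before quotienting (the usual convention).

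\textbf{Energy bound and local compactness.} The finite-energy condition, together with the relation $E(u) = \action([\gamma_y, w \# u]) - \action([\gamma_x, w])$, gives a uniform symplectic area bound on any sequence $u_n$ of solutions representing the fixed class $B$. Combined with the $\omega$-compatibility identity $\|du_n\|^2 = 2\, u_n^*\omega$ in the metric $g_{J_s}$, this yields an $L^2$ bound on $du_n$. Elliptic regularity for the Cauchy--Riemann equation upgrades any $C^0$-bound on derivatives to uniform $C^k$-bounds on compact subsets, so Arzel\`a--Ascoli produces a subsequence converging in $C^\infty_{\mathrm{loc}}$ away from a finite \emph{bubble set} of points where $|du_n|$ blows up.

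\textbf{Ruling out bubbling and identifying the boundary.} The hard part — and the conceptual heart of Gromov compactness — is controlling the degenerations. A standard rescaling argument at a bubble point produces either a nonconstant $J$-holomorphic sphere (if the blow-up is interior) or a $J$-holomorphic disk with boundary on $L_0$ or $L_1$ (if it is on the boundary). Here I would invoke the two topological hypotheses in force: $\int_{S^2} f^*\omega = 0$ for all $[f] \in \pi_2(M)$ kills spheres by positivity of area, and $\pi_1(L_i) = 0$ together with the long exact sequence of $(M, L_i)$ forces disk bubbles to have trivial boundary loop and hence, by the same $\pi_2(M)$-hypothesis applied to the capped disk, zero area as well. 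Therefore no bubbles form, and the only possible noncompactness is breaking at intermediate critical points $z \in L_0 \cap L_1$, giving convergence in the sense of broken trajectories. Standard gluing (the converse of breaking) shows that these broken configurations form the boundary of a topological manifold with corners, proving that the compactified moduli space $\widehat{\moduli}_{\acsfamily}^B(x,y)$ is a compact manifold with boundary of the stated dimension. The principal obstacle throughout is this bubble-exclusion step together with the gluing theorem that equips the compactification with its manifold structure; everything else is formal Fredholm theory and standard elliptic estimates.
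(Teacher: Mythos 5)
The paper does not prove this statement at all: it is quoted as a black box from the literature (Floer, Oh, \emph{cf.} Gromov), and the only commentary offered afterwards is that $\partial \widehat{\moduli}_{\acsfamily}^B(x,y)$ consists of broken flow lines and bubbles and that ``extra work goes into dealing with bubbling.'' So there is nothing in the text to compare your argument against; what you have written is a faithful outline of the standard proof in those references: Fredholm setup on weighted $W^{1,p}$-spaces with index $\Ind(B)$, Sard--Smale applied to the universal moduli space, the energy identity $E(u)=\action$-difference giving the a priori bound, bubbling analysis with sphere bubbles killed by Equation~\eqref{eqn:pi-2} and disk bubbles killed by $\pi_1(L_i)=0$ plus $\omega|_{L_i}=0$, and gluing to identify the broken trajectories as the boundary of the compactification. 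You also correctly flag the dimension convention: the unquotiented moduli space has dimension $\Ind(B)$ and the quotient $\Ind(B)-1$, which is the convention the paper itself uses two paragraphs later (index-$1$ classes give $0$-dimensional $\widehat{\moduli}$), so the literal ``of dimension $\Ind(B)$'' in the statement is best read as you read it. Two points you gloss that are worth naming if you ever expand this: (i) surjectivity of the universal linearization is not bought by $s$-dependence of $\acsfamily$ alone --- one needs that nonconstant Floer strips have somewhere-injective (regular) points, which is the Floer--Hofer--Salamon lemma; and (ii) the rescaling limit at a bubble point is only a sphere or disk after invoking removal of singularities for finite-energy planes/half-planes. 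Neither is a wrong turn, just the places where the real analytic content sits.
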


Here, $\partial \widehat{\mathcal{M}}_\mathcal{J}^B
(x,y)$ consists of holomorphic buildings (\ie broken flow lines and
bubbles). Extra work then goes into dealing with bubbling.

When $\Ind (B) = 1$, $\widehat{\moduli}_{\acsfamily}^B (x, y)$ is a compact 
$0$-dimensional manifold, which guarantees that $\bdy (x)$ is a finite sum.  
When $\Ind (B) = 2$, under favorable conditions (when bubbles are absent or 
appear in a controlled manner), $\widehat{\moduli}_{\acsfamily}^B (x, y)$ is a 
compact $1$-dimensional manifold, whose boundary points correspond to broken 
flow lines. As in Morse homology, this allows one to prove $\bdy \circ \bdy 
\equiv 0$.  The pictures in this proof are reminiscent of the pictures of 
$\bdy^2 \equiv 0$ in grid homology. Fortunately, bubbling is in general not an 
issue for Heegaard Floer homology.


Finally, as mentioned before, unlike Morse homology on a finite-dimensional
manifold, there is no homology
we could relate our Lagrangian Floer homology to, that could provide a proof of 
invariance. In the present context, it turns out that the chain homotopy type 
of
\[
  \CF (M, \omega, L_0, L_1; \widehat{\gamma}, \acsfamily)
\]
is an invariant of the quadruple $(M, \omega, L_0, L_1)$. To prove this, one 
must define chain homotopy equivalences between complexes defined using 
different auxiliary data $(\widehat{\gamma}, \acsfamily)$. In particular, given 
two families of almost complex structures $\acsfamily_1$ and $\acsfamily_2$, we 
choose a family of almost complex structures $\acsfamily$ that ``connects'' the 
two, and use $\acsfamily$ to define a chain homotopy equivalence between the 
chain complexes.

We end this discussion with a couple of features in this theory:

\begin{itemize}
  \item Given a divisor $D^{2n-2} \subset M^{2n}$, we can modify the definition 
    of $\moduli_{\acsfamily}^B (x, y)$ to block all holomorphic curves $u$ that 
    intersect $D$. This would result in a chain complex
    \[
      \CF (M, \omega, L_0, L_1, D; \widehat{\gamma}, \acsfamily).
    \]

  \item For brevity, we will write $\CF (L_0, L_1)$ to mean $\CF (M, \omega, 
    L_0, L_1)$. Given three mutually transversely intersecting Lagrangian 
    submanifolds $L_0$, $L_1$, and $L_2$, there is a map
    \[
      \mu_2 \colon \CF (L_0, L_1) \otimes \CF (L_1, L_2) \to \CF (L_0, L_2),
    \]
    defined by counting (pseudo)holomorphic triangles. A holomorphic triangle 
    is much like a holomorphic bigon, but it has three instead of two punctures 
    on the boundary of its domain (\eg $\mathbb{D}^2 \setminus \{1, \zeta, 
    \zeta^2\} \subset \mathbb{C}$); see Figure~\ref{fig:holo-triangle}.
    \begin{figure}[!htbp]
    	\def\svgwidth{0.4\columnwidth}
    	\centering
    	%
\begingroup%
  \makeatletter%
  \providecommand\color[2][]{%
    \errmessage{(Inkscape) Color is used for the text in Inkscape, but the package 'color.sty' is not loaded}%
    \renewcommand\color[2][]{}%
  }%
  \providecommand\transparent[1]{%
    \errmessage{(Inkscape) Transparency is used (non-zero) for the text in Inkscape, but the package 'transparent.sty' is not loaded}%
    \renewcommand\transparent[1]{}%
  }%
  \providecommand\rotatebox[2]{#2}%
  \newcommand*\fsize{\dimexpr\f@size pt\relax}%
  \newcommand*\lineheight[1]{\fontsize{\fsize}{#1\fsize}\selectfont}%
  \ifx\svgwidth\undefined%
    \setlength{\unitlength}{275.55475486bp}%
    \ifx\svgscale\undefined%
      \relax%
    \else%
      \setlength{\unitlength}{\unitlength * \real{\svgscale}}%
    \fi%
  \else%
    \setlength{\unitlength}{\svgwidth}%
  \fi%
  \global\let\svgwidth\undefined%
  \global\let\svgscale\undefined%
  \makeatother%
  \begin{picture}(1,0.5882667)%
    \lineheight{1}%
    \setlength\tabcolsep{0pt}%
    \put(0,0){\includegraphics[width=\unitlength,page=1]{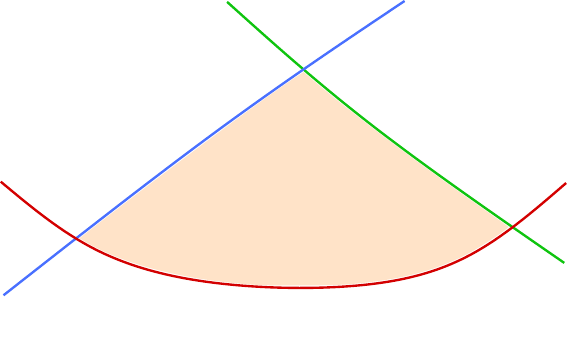}}%
    \put(0.75867487,0.38283521){\color[rgb]{0.05882353,0.76862745,0.05882353}\makebox(0,0)[lt]{\lineheight{1.25}\smash{\begin{tabular}[t]{l}$L_2$\end{tabular}}}}%
    \put(0.21068931,0.39801019){\color[rgb]{0.28627451,0.43921569,1}\makebox(0,0)[lt]{\lineheight{1.25}\smash{\begin{tabular}[t]{l}$L_1$\end{tabular}}}}%
    \put(0.43325575,0.0068328){\color[rgb]{0.82745098,0,0}\makebox(0,0)[lt]{\lineheight{1.25}\smash{\begin{tabular}[t]{l}$L_0$\end{tabular}}}}%
    \put(0,0){\includegraphics[width=\unitlength,page=2]{holo-triangle.pdf}}%
  \end{picture}%
\endgroup%

    	\caption{A holomorphic triangle with two inputs and one output}
    	\label{fig:holo-triangle}
    \end{figure}
    
    The map $\mu_2$ is like a multiplication, in that it satisfies the Leibniz 
    rule with the boundary homomorphisms $\bdy$. However, it is not necessarily 
    associative; instead, it is associative only up to homotopy, given by a map
    \[
      \mu_3 \colon \CF (L_0, L_1) \otimes \CF (L_1, L_2) \otimes \CF (L_2, L_3) 
      \to \CF (L_0, L_3),
    \]
    defined by counting holomorphic quadrilaterals. Writing $\mu_1$ for $\bdy$, 
    there is in fact a collection $\{ \mu_n \}_{n = 1}^\infty$ of maps, each 
    defined by counting holomorphic $(n+1)$-gons, which satisfy a compatibility 
    condition.  This is known as an \emph{$A_\infty$-structure}. Given $(M, 
    \omega)$, the collection of Lagrangian submanifolds, the Lagrangian Floer 
    complexes---assuming they are well defined---and the $A_\infty$-structure 
    on them, together form the \emph{Fukaya category of $(M, \omega)$}.
\end{itemize}

\section{Heegaard Floer homology}
\subsection{Definition}

Finally, we are ready to give a definition of the Heegaard Floer chain complex.
Let $\mathcal{H}=(\Sigma_g,\alphas,\betas,z)$ be a pointed Heegaard diagram, 
and
consider $M=\Sym^g(\Sigma)=(\Sigma\times\dotsb\times\Sigma)/S_g$, where we
quotient the product of $g$ copies of $\Sigma$ by the group $S_g$ of 
permutations on $g$ letters. One can prove that $M$ is a manifold of dimension 
$2g$, by a standard (but non-trivial) argument.

Next, choose a symplectic structure $\omega_\Sigma$ on $\Sigma$;
this induces a symplectic structure $\omega_{\Sigma^{\times g}}$ on 
$\Sigma^{\times g} = \Sigma \times \dotsb \times \Sigma$.
Since the $\alpha_i$'s and $\beta_i$'s are Lagrangian submanifolds of $\Sigma$, 
it follows that
$\mathbb{T}_{\alphas} = \alpha_1 \times \dotsb \times \alpha_g$
and
$\mathbb{T}_{\betas} = \beta \times \dotsb \times \beta_g$
are Lagrangian submanifolds of $\Sigma^{\times g}$ also.
By a result of Perutz \cite{Per08:unpublished}, there is a symplectic form 
$\omega_{\Sym^g (\Sigma)}$ on $\Sym^g (\Sigma)$ that coincides, outside of a 
neighborhood of the diagonal, with the pushforward of $\omega_{\Sigma^{\times 
    g}}$ under the quotient map. Since $\mathbb{T}_{\alphas}$ and 
$\mathbb{T}_{\betas}$ are disjoint from the diagonal, it follows that their 
images, which we denote by the same symbols, are also Lagrangian submanifolds 
of $(\Sym^g (\Sigma), \omega_{\Sym^g (\Sigma)})$.

\begin{definition}[Ozsv\'{a}th and Szab\'{o} \cite{OS04:HF}, reformulation by 
  Perutz \cite{Per08:unpublished, Per08}]\label{def:CF}
  Given $\heeg = (\heegsurf, \alphas, \betas, z)$, where $\heegsurf$ is of 
  genus $g$, we define the \emph{hat Heegaard Floer chain complex} of 
  $\mathcal{H}$ as
  the Lagrangian Floer complex of $M=\Sym ^g (\Sigma )$ with the two
  submanifolds $L_0 = {\mathbb {T}}_{\alphas}$
  and $L_1 = {\mathbb {T}}_{\betas}$,
  \[
    \widehat{\CF}(\mathcal{H})=\CF\left(\Sym^g(\Sigma), 
      \omega_{\Sym^g(\Sigma)}, \mathbb{T}_{\alphas}, \mathbb{T}_{\betas}, \{z\} 
      \times \Sym^{g-1} (\Sigma) \right),
  \]
  blocking all holomorphic bigons that intersect the divisor $\{z\} \times 
  \Sym^{g-1} (\Sigma) \subset \Sym^g (\Sigma)$. (Note that the divisor is also 
  symmetrized.)
    %
\end{definition}


\subsection{Unraveling the definition}

%

Ozsv\'ath and Szab\'o \cite{OS04:HF} gave a more direct definition, however, 
given Perutz's result \cite{Per08:unpublished},
it differs from the standard Lagrangian Floer homology framework only in how it 
handles energy bounds. In any case, we unravel the definition here to make it 
more explicit.

First, an intersection point of $L_0 = \mathbb{T}_{\alphas}$ and $L_1 = 
\mathbb{T}_{\betas}$ corresponds exactly to a matching between the $\alpha$- 
and $\beta$-circles. Second, given two generators $\x$ and $\y$, recall that 
the index of a holomorphic bigon $u \in \moduli_{\acsfamily} (\x, \y)$ is 
determined by the class $B = [u] \in \pi_2 (\Sym^g (\Sigma), 
\mathbb{T}_{\alphas} \cup \mathbb{T}_{\betas})$. It turns out that $B$ 
specifies and is specified by $\x$, $\y$, and a \emph{domain} on $\Sigma$, 
which is a formal linear combination of regions of $\Sigma$, \ie connected 
components of $\Sigma \setminus (\alphas \cup \betas)$. We may thus consider 
$B$ to be an element of the space $\pi_2 (\x, \y)$ of domains from $\x$ to 
$\y$.  Finally, the fact that we block holomorphic bigons that intersect the 
divisor $\{z\} \times \Sym^{g-1} (\Sigma)$ translates to blocking domains that 
contain $z$ in its interior, a condition denoted by $n_z (B) = 0$, where $n_z$ 
is the multiplicity of $z$.

\begin{definition}[Ozsv\'{a}th and Szab\'{o} \cite{OS04:HF}]
  Given a pointed Heegaard diagram $\heeg = (\heegsurf, \alphas, \betas, z)$ 
  where $\heegsurf$ is of genus $g$, and a choice of a family of almost complex 
  structures $\acsfamily = \{ J_s \}_{0 \leq s \leq 1}$, we define the \emph{hat 
    Heegaard Floer chain complex} of $(\mathcal{H}, \acsfamily)$ as the 
  $\bF$-module
  \[
    \CFh (\heeg; \acsfamily) = \bF \langle \genset (\heeg) \rangle
  \]
  with boundary homomorphism $\bdyh \colon \CFh (\heeg; \acsfamily) \to \CFh 
  (\heeg; \acsfamily)$ given by
  \[
    \bdyh (\x) = \sum_{\y \in \genset (\heeg)} \sum_{\substack{B \in \pi_2 (\x, 
        \y) \\ \Ind (B) = 1 \\ n_z (B) = 0}} \# 
    \widehat{\moduli}_{\acsfamily}^B (\x, \y) \cdot \y.
  \]

  Similarly, we define the \emph{minus Heegaard Floer chain complex} of 
  $(\mathcal{H}, \acsfamily)$ as the $\bF [U]$-module
  \[
    \CFm (\heeg; \acsfamily) = \bF[U] \langle \genset (\heeg) \rangle
  \]
  with boundary homomorphism $\bdym \colon \CFm (\heeg; \acsfamily) \to \CFm 
  (\heeg; \acsfamily)$ given by
  \[
    \bdym (\x) = \sum_{\y \in \genset (\heeg)} \sum_{\substack{B \in \pi_2 (\x, 
        \y) \\ \Ind (B) = 1}} \# \widehat{\moduli}_{\acsfamily}^B (\x, \y) 
    \cdot U^{n_z (B)} \cdot \y.
  \]

  Finally, we define the \emph{infinity} and \emph{plus Heegaard Floer chain 
    complexes} of $(\mathcal{H}, \acsfamily)$ respectively by
  \begin{align*}
    \CFi (\heeg; \acsfamily) &= \CFm (\heeg; \acsfamily) \otimes_{\bF [U]} \bF 
    [U, U^{-1}],\\
    \CFp (\heeg; \acsfamily) &= \CFi (\heeg; \acsfamily) \otimes_{\bF [U]} 
    \left( \faktor{\bF [U, U^{-1}]}{U} \right),
  \end{align*}
  viewed as chain complexes of $\bF[U]$-modules. Note that, alternatively, these 
  complexes can also be defined without reference to $\CFm (\heeg; 
  \acsfamily)$, with the boundary homomorphism explicitly defined.\footnote{As 
    we shall see later, when $\heeg$ is weakly but not strongly admissible, it 
    is possible that we cannot guarantee $\CFm (\heeg; \acsfamily)$ to be well 
    defined, while we can guarantee $\CFp (\heeg; \acsfamily)$ to be well 
    defined.}
\end{definition}

%
%

A few comments are in order:
\begin{itemize}
  \item As discussed before, both the proof that the boundary homomorphisms are 
    finite sums and the proof that $\bdy^2 \equiv 0$ require Gromov 
    compactness.
  \item When $\Ind (\x) - \Ind (\y) = 1$, Gromov compactness guarantees that 
    $\widehat{\moduli}_{\acsfamily}^B (\x, \y)$ consists of a finite number of 
    points. But to get that the boundary homomorphism is a finite sum, one 
    would also need to know that there are only finitely many elements in $B 
    \in \pi_2 (\x, \y)$ that contribute a non-zero term. We will need to 
    address this later, when we discuss the admissibility of diagrams.
  \item As in general Lagrangian Floer homology, one can prove that the chain 
    complex is independent of the choice of $\acsfamily$. Namely, given 
    $\acsfamily_1$ and $\acsfamily_2$, one uses a ``connecting'' family of 
    almost complex structures $\acsfamily$ to define a chain homotopy 
    equivalence. This justifies writing $\CFc (\heeg)$.
  \item However, the goal of Heegaard Floer homology is not to create 
    invariants of pointed Heegaard diagrams $\heeg$. Instead, it is to create 
    invariants of pointed $3$-manifolds $(Y, z)$. This is indeed the case, but 
    we defer the statement of invariance until after the discussion of 
    $\Spinc$-decompositions.
  \item When the genus $g (\Sigma) = 1$, we have that $\Sym^g (\Sigma) = 
    \Sigma$. This means that we are really counting holomorphic bigons in 
    $\Sigma$.  Given $B \in \pi_2 (\x, \y)$ with the right index, by the 
    Riemann Mapping Theorem, there is a unique holomorphic disk (up to 
    translation by $\mathbb{R}$) in the moduli space 
    $\widehat{\moduli}_{\acsfamily}^B (\x, \y)$, which means that the count is 
    necessarily $1$ for each such $B$. This means that, when $g (\Sigma) = 1$, 
    the complex $\CFc (\heeg)$ can be computed combinatorially rather than 
    holomorphically; in other words, our ``naive'' definition of the Heegaard 
    Floer chain complex, while problematic in general, is mostly fine when $g 
    (\Sigma) = 1$.
  \item Like the grid chain complexes, the Heegaard Floer chain complexes can 
    also be defined over $\Z$ or $\Z [U]$. However, even more work is required 
    here, as one needs to orient the relevant moduli spaces consistently.
\end{itemize}

%

\subsection{$\Spinc$-decomposition, gradings, admissibility, and invariance}

We now explain, without proof or details, the $\Spinc$-decomposition and the homological Maslov grading on Heegaard Floer homology. Let
$\mathord{\circ} \in \{ \raisebox{-1ex}{$\widehat{\phantom{m}}$}, -, +, \infty 
\}$ denote any of the four flavors.
\begin{itemize}
  \item First, there is a decomposition of $\CFc (\heeg)$ into a direct sum
    \[
      \CFc (\mathcal{H})=\bigoplus_{\s\in \Spinc(Y)} \CFc (\mathcal{H},\s),
    \]
    with each summand corresponding to a $\Spinc$-structure of $Y$.
    We give a terse summary below, and encourage the reader to consult 
    references on this topic. For example, other than the description contained 
    in \cite[Section~2.6]{OS04:HF}, see also \cite[Section~1]{Tur97} for the 
    equivalence between $\Spinc$-structures and Euler structures. For our 
    purposes, it suffices to know that
    \begin{itemize}
      \item The set $\Spinc (Y)$ of $\Spinc$-structures on $Y$ is an affine 
        copy of $H^2 (Y; \Z)$, with an action by $H^2 (Y; \Z)$;
      \item There is a one-to-one correspondence between $\Spinc$-structures 
        and \emph{Euler structures}, which are homology classes of 
        nowhere-vanishing vector fields.  In other words, each $\s \in \Spinc 
        (Y)$ is represented non-uniquely by a nowhere vanishing 
        vector field $v$ on $Y$.  
        Replacing $v$ by $-v$ corresponds to replacing $\s$ by its 
        \emph{conjugate $\Spinc$-structure} $\overline{\s}$.
      \item For $\s \in \Spinc (Y)$, one can define its \emph{first Chern 
          class} $c_1 (\s) \in H^2 (Y; \Z)$ as $\s - \overline{\s}$, 
        interpreted as the element in $H^2 (Y; \Z)$ that sends $\overline{\s}$ 
        to $\s$ by its action. Alternatively, it can be defined as the first 
        Chern class of the orthogonal complement of $v$.  Intuitively, $c_1$ 
        acts like ``multiplication by $2$'' on $\Spinc (Y)$ (which is of course 
        not well defined on an affine space).
    \end{itemize}
    The key point is then the following. Given $\x \in \genset (\heeg)$, 
    observe that $\x$ pairs up the index-$1$ and index-$2$ critical points of a 
    Morse function $f$ corresponding to $\heeg$; in fact, it specifies a 
    gradient flow line between each pair. Likewise, $z$ specifies a gradient 
    flow line between the index-$0$ and index-$3$ critical points. The gradient 
    vector field of $f$ does not vanish outside a neighborhood of these flow 
    lines, and can be completed to a vector field that is nowhere vanishing on 
    $Y$.  In short, $\x$ and $z$ determine some $\s_z (\x) \in \Spinc (Y)$. It 
    can be proved that $\pi_2 (\x, \y) \neq \emptyset$ if and only if $\s_z 
    (\x) = \s_z (\y)$, which gives the direct sum decomposition.
  \item Next, each summand $\CFc (\heeg, \s)$ is relatively $\Z / m$-graded by 
    $\Ind (B)$, where
    \[
      m = \gcd_{\xi \in H_2 (Y; \Z)} \langle c_1 (\s), \xi \rangle.
    \]
    Here, since $\Ind (B)$ only provides the grading difference between $\x$ 
    and $\y$, the grading is \emph{relative}, meaning that it is defined only 
    up to a shift.

    Note that if $c_1 (\s) \in H^2 (Y; \Z)$ is torsion, then the evaluation 
    against $\xi$ is always $0$. In this case, $m = 0$, and $\CFc (\heeg, \s)$ 
    is relatively $\Z$-graded. (With further work \cite{OS06:HF-4, OS03:HF-gr}, it can 
    be lifted to an absolute $\mathbb{Q}$-grading.) In particular, this applies 
    when $H^2 (Y; \Z) \cong H_1 (Y; \Z)$ is torsion, \ie when $b_1 (Y) = 0$.

  \item Given $\s \in \Spinc (Y)$, to compute the summand of the Heegaard 
    Floer complex associated to $\s$, we need to use a diagram $\heeg$ such 
    that $\s_z (\x) = \s$ for some $\x \in \genset (\heeg)$. This condition is 
    called \emph{$\s$-realizability}.
    
    Furthermore, for a fixed $\s$, we need to ensure that whenever $\s_z (\x) = 
    \s_z (\y) = \s$, there are only finitely many $B \in \pi_2 (\x, \y)$ that 
    contribute a non-zero term to the differential. It turns out that the 
    different flavors requires admissibility of different strengths. For a 
    given $\s$, a Heegaard diagram $\heeg$ may be \emph{strongly 
      $\s$-admissible}, \emph{weakly $\s$-admissible}, or neither. We do not 
    give the definitions here; see \cite[Section~4.2.2]{OS04:HF}. 
    \begin{itemize}
      \item If $\heeg$ is strongly $\s$-admissible, then $\bdyh$, $\bdym$, 
        $\bdy^+$, and $\bdy^\infty$ all give finite sums; while
      \item If $\heeg$ is weakly $\s$-admissible, then $\bdyh$ and $\bdy^+$ 
        give finite sums.
    \end{itemize}
    Notably, if $\heeg$ is strongly $\s_0$-admissible for a torsion $\s_0$, 
    then it is weakly $\s$-admissible for all $\s$.
\end{itemize}

We are now ready to state the invariance of Heegaard Floer homology.

\begin{theorem}[Ozsv\'ath and Szab\'o {\cite[Theorem~11.1]{OS04:HF}}]
  \label{thm:hf-inv} Let $\heeg$ be a pointed Heegaard diagram of $(Y, z)$.  If $\heeg$ is 
  strongly (resp.\ weakly) $\s$-admissible, then the graded isomorphism type of 
  $\HFc (\heeg, \s)$ is an invariant of $(Y, \s)$ for $\mathord{\circ} \in \{ 
  \raisebox{-1ex}{$\widehat{\phantom{m}}$}, -, +, \infty \}$ (resp.\ for 
  $\mathord{\circ} \in \{ \raisebox{-1ex}{$\widehat{\phantom{m}}$}, + \}$).  
  Thus, we may write $\HFc (Y, \s)$ for these homologies.
\end{theorem}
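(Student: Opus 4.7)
The plan is to mirror the proof strategy used for grid homology, but now in the setting of pseudoholomorphic curves on $\Sym^g(\heegsurf)$. First, I would invoke the classical Reidemeister-type theorem for Heegaard diagrams (due to Singer): any two pointed Heegaard diagrams of $(Y,z)$ are related by a finite sequence of (i) isotopies of the $\alpha$- and $\beta$-curves (through transverse configurations), (ii) handleslides among the $\alpha$-curves or among the $\beta$-curves, and (iii) stabilizations and destabilizations. To these, one must add (iv) invariance under the choice of almost complex structure family $\acsfamily$, and (v) independence of the choice of symplectic form $\omega_{\Sym^g(\heegsurf)}$. It suffices to produce, for each of these moves, a graded chain homotopy equivalence between the associated Heegaard Floer complexes, respecting the $\Spinc$-decomposition, and then observe that compositions of such equivalences give the claimed invariance at the level of graded homology.

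For the change of $\acsfamily$, I would use the standard Lagrangian Floer continuation argument sketched in the excerpt: connect $\acsfamily_0$ and $\acsfamily_1$ by a path $\{\acsfamily_t\}$ and count parametrized holomorphic bigons to define a chain map, with chain homotopies arising from counting the corresponding parametrized moduli spaces with one-dimensional components. Isotopy invariance (through transverse configurations) reduces to this by a Hamiltonian-isotopy argument for $\mathbb{T}_\alphas$ and $\mathbb{T}_\betas$, and invariance under the choice of symplectic form reduces to invariance of the Fukaya-style count under deformations in which $\mathbb{T}_\alphas$ and $\mathbb{T}_\betas$ remain Lagrangian. The admissibility hypothesis must be tracked through the homotopies so that all moduli counts remain finite.

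The main obstacle, and the step that most closely parallels the commutation invariance of $\GHm$ in the grid setting, is handleslide invariance. The strategy is to combine the two diagrams $\heeg_1$ and $\heeg_2$ into a single Heegaard triple diagram $(\heegsurf,\alphas,\betas,\betas',z)$, where $\betas'$ is obtained from $\betas$ by a handleslide together with small isotopies to achieve transversality. The diagram $(\heegsurf,\betas,\betas')$ represents a connected sum of $S^1\times S^2$'s, whose top-graded generator $\Theta \in \HFc(\heegsurf,\betas,\betas',z)$ plays the role of the distinguished intersection point $a$ in the grid pentagon story. I would then define the chain map by counting index-zero pseudoholomorphic triangles in $\Sym^g(\heegsurf)$ with vertices on $\mathbb{T}_\alphas$, $\mathbb{T}_\betas$, $\mathbb{T}_{\betas'}$ (and one corner at $\Theta$), blocking $\{z\}\times\Sym^{g-1}(\heegsurf)$ as appropriate to each flavor. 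The chain homotopies are built from counts of holomorphic quadrilaterals, paralleling the hexagon count in the grid proof; the identity on homology arises from certain annular domains, exactly as in Figure~\ref{fig:grid-proof-invar-comm-id}. Verifying the pentagon/square compatibility requires a Gromov compactness analysis of the ends of the relevant index-one moduli spaces, together with control of bubbling (which is fortunately mild in the Heegaard Floer setting).

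Finally, stabilization invariance follows from a model calculation: a stabilization replaces $(\heegsurf,\alphas,\betas)$ by $(\heegsurf\#T^2,\alphas\cup\{\alpha_{g+1}\},\betas\cup\{\beta_{g+1}\})$ where $\alpha_{g+1}$ and $\beta_{g+1}$ meet in a single point on the new torus summand; one shows that the resulting complex is isomorphic to the original by a ``nearest point'' identification of generators, and that no new holomorphic disks appear for index reasons. Independence of the choice of basepoint $z$ within a single region follows similarly. Throughout all of this, the $\Spinc$-decomposition must be shown to be preserved by every map (since triangle maps can a priori mix $\Spinc$-structures, one must verify this on the homological level using the labeling $\s_z$), and one tracks the admissibility condition along the sequence of moves, using that strong/weak $\s$-admissibility can always be achieved by further isotopy. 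Composing all of these graded chain homotopy equivalences yields the desired invariance of $\HFc(\heeg,\s)$ as a graded $\bF[U]$-module (or $\bF$-module for the hat flavor), completing the proof.
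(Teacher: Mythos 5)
Your proposal follows essentially the same route as the paper's own sketch: reduce to Heegaard moves (isotopies, handleslides, (de)stabilizations) plus independence of the analytic choices, handle changes of almost complex structure by continuation maps, prove handleslide invariance via a Heegaard triple diagram and the triangle-counting map $\x \mapsto \mu_2(\x \otimes \Theta)$ with homotopies coming from $\mu_3$ (quadrilateral counts), and treat stabilization by the tautological identification of generators together with a gluing/model argument, all while tracking the $\Spinc$-decomposition and admissibility. This matches the paper's argument, so no further comparison is needed.
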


\begin{remark}
  As in grid homology, in fact, the graded chain homotopy type of $\CFc (\heeg, 
  \s)$ is an invariant of $(Y, \s)$. Also, it may seem that the point $z$ here 
  does not play a role. However, if we wish to get concrete homology modules 
  $\HFc (Y, \s)$ rather than isomorphism types, we would need to know that the 
  isomorphisms relating $\HFc (\heeg; \acsfamily)$ for different choices of 
  $(\heeg, \acsfamily)$ are canonical. This \emph{naturality} property is 
  proved in \cite{JTZ21}, and is required for functoriality as well as the 
  definition of a variant called involutive Heegaard Floer homology 
  \cite{HM17:HFI}. It turns out that, taking naturality into account, Heegaard 
  Floer theory is really a \emph{pointed} theory, with concrete homology 
  modules $\HFc (Y, z, \s)$.
\end{remark}

The proof of Theorem~\ref{thm:hf-inv} proceeds as follows. Analogous to 
commutation and (de)stabilization for grid diagrams, two weakly or strongly 
$\s$-admmissible pointed Heegaard diagrams are related by a finite sequence of 
isotopies, handleslides, and (de)stabilizations.

Here, isotopies refer to isotopies of the sets of $\alpha$- and $\beta$-curves, 
independent of each other, and relative to $z$ (\ie the curves are not allowed 
to cross $z$).  If the $\alpha$- and $\beta$-curves remain transverse to each 
other throughout the isotopy, this is equivalent to choosing a path of 
different $\acsfamily$'s, which has been dealt with.  Otherwise, one defines 
the chain homotopy equivalence by counting holomorphic bigons with a (time 
$t$)--dependent constraint.

Handleslides refer to handlesliding $\alpha$-circles over $\alpha$-circles, and 
sliding $\beta$-circles over $\beta$-circles. Let us focus on the latter. We 
get two sets of $\beta$-curves, $\betas_1$ and $\betas_2$, corresponding to 
$\heeg_1$ and $\heeg_2$ respectively.  As in grid homology, we may combine them 
with $\alphas$ into a single \emph{Heegaard triple diagram}
\[
  (\Sigma, \alphas, \betas_1, \betas_2, z),
\]
as in Figure~\ref{fig:triple-diagram-S3}.
\begin{figure}[!htbp]
	\def\svgwidth{1\columnwidth}
	\centering
	%
\begingroup%
  \makeatletter%
  \providecommand\color[2][]{%
    \errmessage{(Inkscape) Color is used for the text in Inkscape, but the package 'color.sty' is not loaded}%
    \renewcommand\color[2][]{}%
  }%
  \providecommand\transparent[1]{%
    \errmessage{(Inkscape) Transparency is used (non-zero) for the text in Inkscape, but the package 'transparent.sty' is not loaded}%
    \renewcommand\transparent[1]{}%
  }%
  \providecommand\rotatebox[2]{#2}%
  \newcommand*\fsize{\dimexpr\f@size pt\relax}%
  \newcommand*\lineheight[1]{\fontsize{\fsize}{#1\fsize}\selectfont}%
  \ifx\svgwidth\undefined%
    \setlength{\unitlength}{528.76463528bp}%
    \ifx\svgscale\undefined%
      \relax%
    \else%
      \setlength{\unitlength}{\unitlength * \real{\svgscale}}%
    \fi%
  \else%
    \setlength{\unitlength}{\svgwidth}%
  \fi%
  \global\let\svgwidth\undefined%
  \global\let\svgscale\undefined%
  \makeatother%
  \begin{picture}(1,0.35601525)%
    \lineheight{1}%
    \setlength\tabcolsep{0pt}%
    \put(0,0){\includegraphics[width=\unitlength,page=1]{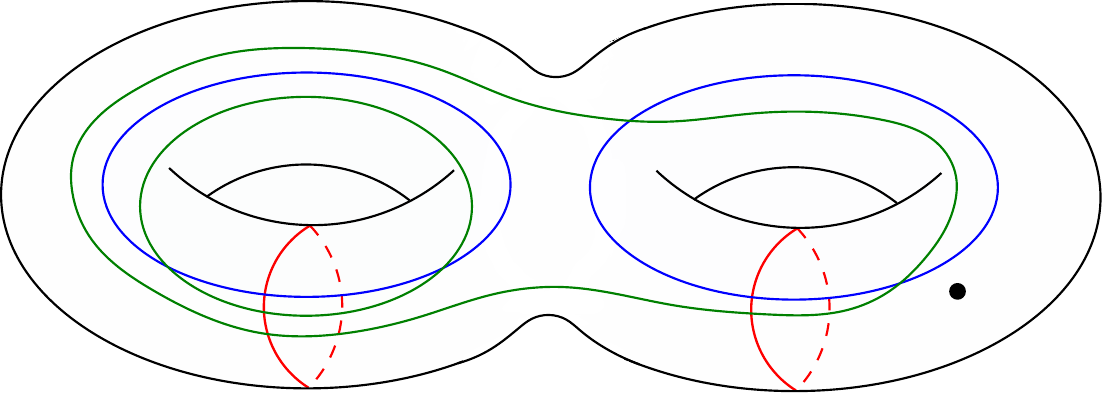}}%
    \put(0.88376523,0.07172025){\makebox(0,0)[lt]{\lineheight{1.25}\smash{\begin{tabular}[t]{l}$z$\end{tabular}}}}%
  \end{picture}%
\endgroup%

	\caption{A Heegaard triple diagram. Red circles represent $\alphas$, blue 
		circles represent $\betas_1$, and green circles represent $\betas_2$.  
		Both $(\Sigma, \alphas, \betas_1, z)$ and $(\Sigma, \alphas, \betas_2, 
		z)$ are Heegaard diagrams of $(S^3, z)$}
	\label{fig:triple-diagram-S3}
\end{figure}

One may now define a map $\Phi \colon \CFc (\heeg_1) \to \CFc (\heeg_2)$ using 
the $\mu_2$ map in the $A_\infty$-structure on the Fukaya category,
\[
  \mu_2 \colon \CF (\mathbb{T}_{\alphas}, \mathbb{T}_{\betas_1})
  \otimes
  \CF (\mathbb{T}_{\betas_1}, \mathbb{T}_{\betas_2})
  \to
  \CF (\mathbb{T}_{\alphas}, \mathbb{T}_{\betas_2}),
\]
defined by counting holomorphic triangles. To define $\Phi$, we choose a 
generator $\Theta$ in
\(
  \CF (\mathbb{T}_{\betas_1}, \mathbb{T}_{\betas_2})
\)
and define
\[
  \Phi (\x) = \mu_2 (\x \otimes \Theta).
\]
One can then use a compatibility condition of the $A_\infty$-structure to check 
that $\Phi$ is a chain map if $\Theta$ is a cycle. Then to define chain 
homotopies, one uses $\mu_3$ and count holomorphic quadrilaterals.

Finally, stabilization refers to taking the connected sum with a toroidal 
Heegaard diagram $(T^2, \alpha, \beta)$ of $S^3$ where $\alpha$ and $\beta$ 
meet exactly once, and destabilization is the inverse process. If $\heeg'$ is a 
stabilization of $\heeg$, there is a tautological identification of $\genset 
(\heeg')$ with $\genset (\heeg)$; however, to identify the boundary 
homomorphisms, some work is required to identify the corresponding moduli 
spaces by gluing holomorphic curves. Combining all these proofs together would 
then conclude the proof of invariance.

\begin{example}
  In Example~\ref{exp:s1-s2}, we looked at two Heegaard diagrams $\heeg_3$ and 
  $\heeg_4$ of $(S^1 \times S^2, z)$. There, we used a ``naive'' definition of 
  the Heegaard Floer chain complexes---which turns out to be fine in this 
  case---to compute
  \begin{align*}
    \HFh (\heeg_3) & \cong \bF \oplus \bF, & \HFh (\heeg_4) & \cong 0\\
    \wHFm (\heeg_3) & \cong \bF [U] \oplus \bF [U], & \wHFm (\heeg_4) & \cong 
    \faktor{\bF [U]}{(1 + U)}.
  \end{align*}

  We can now explain this phenomenon. Observe that $H^2 (S^1 \times S^2; \Z) 
  \cong \Z$, and so $\Spinc (S^1 \times S^2)$ is an affine copy of $\Z$.  The 
  word ``affine'' here is perhaps not necessary in this case, since there is in 
  fact a unique self-conjugate, torsion $\Spinc$-structure $\s_0$, satisfying 
  $c_1 (\s_0) = 0$. Fixing an isomorphism $H^2 (S^1 \times S^2; \Z) \cong \Z$, 
  we let $\s_i$ denote the $\Spinc$-structure satisfying $\s_i = \s_0 + i$.  As 
  it turns out, $\heeg_3$ is strongly $\s_0$-admissible, implying that it is 
  weakly $\s_i$-admissible for all $i \in \Z$. On the other hand, $\heeg_4$ is 
  strongly $\s_1$-admissible; since $\s_1$ is not torsion, one cannot conclude 
  that $\heeg_4$ is weakly $\s_i$-admissible for all $i \in \Z$. In fact, it is 
  weakly $\s_i$-admissible for all $i \neq 0$.

  From the above, we may conclude that
  \begin{align*}
    \HFh (S^1 \times S^2, \s_0) & \cong \bF \oplus \bF, & \HFh (S^1 \times S^2, 
    \s_1) & \cong 0\\
    \wHFm (S^1 \times S^2, \s_0) & \cong \bF [U] \oplus \bF [U], & \wHFm (S^1 
    \times S^2, \s_1) & \cong \faktor{\bF [U]}{(1 + U)}.
  \end{align*}
\end{example}

\begin{exercise}
  Determine the relative gradings on these homologies.
\end{exercise}

\subsection{Some computational tools}

One of the biggest strengths of Heegaard Floer homology, compared to other 
Floer-theoretic invariants, is its computability. We now present a brief survey 
of some key computational tools that have been developed in the past several 
decades, for the reader to pursue further:

\begin{itemize}
    \item \textbf{Cylindrical reformulation (Lipshitz \cite{Lip06})}\\
      Instead of counting holomorphic disks in $\Sym^g (\Sigma)$, one can count 
      holomorphic curves in $\Sigma \times [0, 1] \times \R$ instead.  The 
      price is that one must now count holomorphic maps $u \colon S \to \Sigma 
      \times [0, 1] \times \R$, whose domain $S$ is any Riemann surface 
      (possibly with multiple components). Lipshitz also provided an index formula to 
      compute $\Ind (B)$.  This is the formulation on which the following is 
      built.
    \item \textbf{Bordered Heegaard Floer homology (Lipshitz, Ozsv\'ath, and 
        Thurston \cite{LOT18})}\\
      This theory introduces cut-and-paste methods to Heegaard Floer homology.  
      It assigns a dg or $A_\infty$-module to a $3$-manifold $Y$ with 
      parametrized boundary, over an algebra associated with that boundary 
      surface. If $Y$ is obtained from gluing $Y_1$ and $Y_2$ along their 
      common parametrized boundary, then $\CFh (Y)$ can be reconstructed as a 
      derived tensor product of the modules associated to $Y_1$ and $Y_2$.

    \item \textbf{Plumbed $3$-manifolds and lattice homology (Ozsv\'{a}th and 
        Szab\'{o} \cite{OS03:plumbed}, N\'emethi \cite{Nem05, Nem08}, 
        Ozsv\'ath, Stipsicz, and Szab\'o \cite{OSS14:plumbed}, Zemke 
        \cite{Zem23:plumbed})}\\
      Ozsv\'ath and Szab\'o \cite{OS03:plumbed} first computed $\wHFm (Y)$ of certain 
      plumbed $3$-manifolds with negative-definite plumbing graphs. N\'emethi 
      \cite{Nem08} defined a combinatorial \emph{lattice (co)homology} 
      $\mathbb{H}$ for any negative-definite plumbing graph, connecting 
      Heegaard Floer homology to singularity theory. For almost rational 
      plumbing graphs (which includes the graphs that Ozsv\'ath and Szab\'o 
      considered), N\'emethi \cite{Nem05} proved that $\mathbb{H}^- (Y)$ 
      coincides with $\wHFm (Y)$ \cite{Nem05} (but $\mathbb{H}^- (Y)$ enjoys 
      more structure); this is later extended to a larger class of graphs by 
      Ozsv\'ath, Stipsicz, and Szab\'o \cite{OSS14:plumbed}, and to all 
      negative-definite plumbing trees by Zemke \cite{Zem23:plumbed}.

    \item \textbf{Nice diagrams (Sarkar and Wang \cite{SW10})}\\
      If every region, \ie every component of $\Sigma\setminus 
      (\alphas\cup\betas)$, either is blocked by $z$ (in the \emph{hat} 
      flavor), or is a bigon or a rectangle, then the count of holomorphic 
      curves is in fact entirely combinatorial: Every domain that is a bigon or 
      a rectangle has a unique holomorphic representative. This is of great 
      importance, since this means that, by judiciously choosing a Heegaard 
      diagram, the Heegaard Floer chain complex can be combinatorially computed 
      without any reference to almost complex structures or solving any partial 
      differential equations. Sarkar and Wang showed that any closed,
      oriented three-manifold admits such Heegaard diagram, which they called \emph{nice}.

    \item \textbf{Surgery exact triangle (Ozsv\'{a}th and Szab\'{o} 
        \cite{OS04:HF-prop})}\\
      There is a long exact sequence
      \[
        \dotsb \to \HFhat (Y)\to \HFhat (Y_r(K))\to \HFhat 
        (Y_{r+1}(K))\to\dotsb
      \]
      where $Y_r(K)$ is the $r$-surgery on $Y$ along the
      knot $K$. Knowing two of these terms often allows one to compute the 
      third.

    \item \textbf{Large, integer, and rational surgery formulas
      (Ozsv\'{a}th and Szab\'{o} \cite{OS04:HFK, OS08:HFK-Z-surg, 
        OS11:HFK-Q-surg}, Rasmussen \cite{Ras03:HFK}, Manolescu and Ozsv\'ath 
      \cite{MO22:HFK-Z-surg})}\\
      For a $\Z$-homology sphere $Y$, the large surgery formula \cite{OS04:HFK, 
        Ras03:HFK} allows one to
      extract $\CFc(Y_p(K))$,
      when $p \in \Z$ is large,
      from the \emph{infinity knot Floer chain complex} $\CFKi (Y, K)$ as a 
      subquotient. More complicated formulas allow one to extract $\CFc (Y_p 
      (K))$ for any $p \in \Z$ \cite{OS08:HFK-Z-surg}, extended to $\Z$-surgery 
      on links \cite{MO22:HFK-Z-surg}, and $\CFc (Y_r (K))$ for $r \in 
      \mathbb{Q}$ \cite{OS11:HFK-Q-surg}.
\end{itemize}

\subsection{Knot Floer homology: Definition and features}

The knot Floer complex mentioned above is a variant of the Heegaard Floer 
complex, and may be viewed as a \emph{relative} version of the theory.

We start by considering \emph{doubly pointed Heegaard diagrams} for a pair $(Y, 
K)$, where $K \subset Y$ is an oriented knot. (For ease of exposition, it is 
often assumed that $Y$ is a $\Z$- or $\mathbb{Q}$-homology sphere, and $K$ is 
null-homotopic.) Such diagrams are similar to the pointed Heegaard diagrams we 
have been working with so far, only with one additional point $w$,
\[
  \heeg = (\heegsurf, \alphas, \betas, z, w).
\]
Here $(\heegsurf, \alphas, \betas)$ is a Heegaard diagram of $Y$. The diagram 
$\heeg$ encodes a knot $K$ in the following way: Draw an oriented path from $z$ 
to $w$ on $\heegsurf$, avoiding $\alpha$-curves, and push it into the 
$\alpha$-handlebody; then draw an oriented path from $w$ to $z$ on $\heegsurf$, 
avoiding $\beta$-curves, and push it into the $\beta$-handlebody; $K$ is the 
union of these two oriented paths.  Formulated in an equivalent way, $w$ and 
$z$ each determine a gradient flow line from the index-$3$ critical point to 
the index-$0$ critical point, and $K$ is the union of these two gradient flow 
lines.

\begin{example}
Figure~\ref{fig:two-HD-of-3sphere-LHT} shows how one recovers a knot from a 
doubly pointed Heegaard diagram. The depicted Heegaard diagram encodes the 
left-handed trefoil.

    \begin{figure}[!htbp]
     \centering
     \begin{subfigure}[c]{0.4\textwidth}
     	\def\svgwidth{1\columnwidth}
     	\centering
     	%
\begingroup%
  \makeatletter%
  \providecommand\color[2][]{%
    \errmessage{(Inkscape) Color is used for the text in Inkscape, but the package 'color.sty' is not loaded}%
    \renewcommand\color[2][]{}%
  }%
  \providecommand\transparent[1]{%
    \errmessage{(Inkscape) Transparency is used (non-zero) for the text in Inkscape, but the package 'transparent.sty' is not loaded}%
    \renewcommand\transparent[1]{}%
  }%
  \providecommand\rotatebox[2]{#2}%
  \newcommand*\fsize{\dimexpr\f@size pt\relax}%
  \newcommand*\lineheight[1]{\fontsize{\fsize}{#1\fsize}\selectfont}%
  \ifx\svgwidth\undefined%
    \setlength{\unitlength}{294.84135111bp}%
    \ifx\svgscale\undefined%
      \relax%
    \else%
      \setlength{\unitlength}{\unitlength * \real{\svgscale}}%
    \fi%
  \else%
    \setlength{\unitlength}{\svgwidth}%
  \fi%
  \global\let\svgwidth\undefined%
  \global\let\svgscale\undefined%
  \makeatother%
  \begin{picture}(1,0.63401607)%
    \lineheight{1}%
    \setlength\tabcolsep{0pt}%
    \put(0,0){\includegraphics[width=\unitlength,page=1]{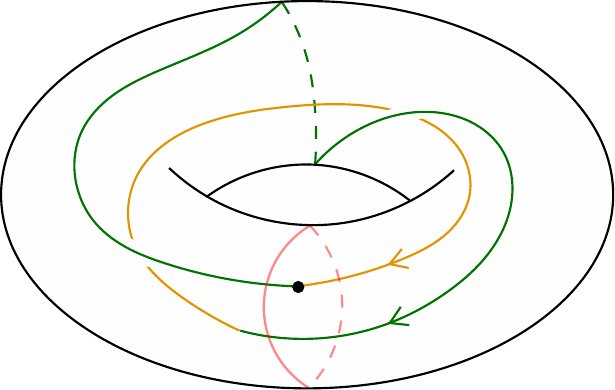}}%
    \put(0.49975384,0.14464655){\makebox(0,0)[lt]{\lineheight{1.25}\smash{\begin{tabular}[t]{l}$w$\end{tabular}}}}%
    \put(0,0){\includegraphics[width=\unitlength,page=2]{HD-S3-doubly-pointed.pdf}}%
    \put(0.4071824,0.06237536){\makebox(0,0)[lt]{\lineheight{1.25}\smash{\begin{tabular}[t]{l}$z$\end{tabular}}}}%
    \put(0,0){\includegraphics[width=\unitlength,page=3]{HD-S3-doubly-pointed.pdf}}%
  \end{picture}%
\endgroup%

     	\caption{A doubly pointed Heegaard diagram}
     	\label{fig:3sphere-2points}         
     \end{subfigure}
     \hfill
     \begin{subfigure}[c]{0.4\textwidth}
         \centering
         \includegraphics[scale=0.6]{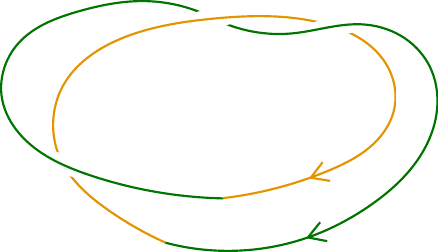}
         \caption{The left-handed trefoil}
         \label{fig:3sphere-LHT}
     \end{subfigure}
        \caption{Recovering a knot from a doubly pointed Heegaard diagram}
        \label{fig:two-HD-of-3sphere-LHT}
\end{figure}
\end{example}

\begin{definition}[\cf Ozsv\'ath and Szab\'o {\cite{OS04:HFK}}, Rasmussen 
  {\cite{Ras03:HFK}}]
  Suppose that $Y$ is a $\Z$-homology sphere, which implies that there is only 
  one $\Spinc$-structure on $Y$, which is torsion. Given a doubly pointed 
  Heegaard diagram $\heeg = (\heegsurf, \alphas, \betas, z, w)$ of $(Y, K)$, 
  denote by $\heeg'$ the pointed Heegaard diagram $\heeg = (\heegsurf, \alphas, 
  \betas, z)$ of $(Y, z)$. We define the \emph{hat} and \emph{minus knot Floer 
    chain complexes} of $\mathcal{H}$,
  \[
    \CFKh (\heeg), \qquad \CFKm (\heeg),
  \]
  respectively as the chain complexes
  \[
    \CFh (\heeg'), \qquad \CFm (\heeg'),
  \]
  equipped with a relative $\Z$-filtration $A$, called the \emph{Alexander 
    filtration}, characterized by
  \[
    A (\x) - A (\y) = n_z (B) - n_w (B)
  \]
  for every $B \in \pi_2 (\x, \y)$, where $n_z$ and $n_w$ denote the 
  multiplicities of $z$ and $w$ in $B$ respectively. To ensure that $\CFKm 
  (\heeg)$ is filtered, we set the filtration level of $U$ to be $-1$.
  
  One can also define the \emph{infinity} and \emph{plus knot Floer chain 
    complexes} $\CFK^\infty (\heeg)$ and $\CFK^+ (\heeg)$ similarly.

  Note that $H_* (\CFKc (Y, K))$ is again $\HFc (Y)$, since we must ignore the 
  filtration when taking the homology. Given a $\Z$-filtered chain complex 
  $\mathcal{C} = (C, \wfilt)$, recall that its \emph{associated graded object} 
  is the chain complex
  \[
    \mathrm{g}\mathcal{C} = \bigoplus_{i \in \Z} \wfilt_i C / \wfilt_{i-1} C,
  \]
   with an internal relative $\Z$-grading given by $i$. (Colloquially, in this 
   context, we ``erase'' any term from the differential that strictly lowers 
   the filtration.) In our setting, the internal $\Z$-grading on $\gCFKc 
   (\heeg)$ is called its \emph{Alexander grading}. (It continues to have a 
   homological relative \emph{Maslov grading}.) We define the \emph{knot Floer 
     homology} of $\heeg$ to be
  \[
    \HFKc (\heeg) = H_* (\gCFKc (\heeg)).
  \]

  When $Y = S^3$, we can fix an \emph{absolute Maslov grading} $M$ on $\genset 
  (\heeg')$ by requiring that
  \[
    H_* (\CFKh (\heeg)) \cong \HFh (S^3) \cong \bF
  \]
  be concentrated in grading $0$. In this case, one can, with work, prove that 
  the difference between the top and bottom Alexander gradings supported in 
  $\HFKh (\heeg)$ is always $2 g (K)$, where $g$ denotes the Seifert genus 
  \cite{OS04:HFK-genus}; there is an \emph{absolute Alexander grading} $A$ on 
  $\genset (\heeg')$ characterized by the condition that the top Alexander 
  grading be $g (K)$ and the bottom $- g (K)$.
\end{definition}


Of course, the point is that these are knot invariants.

\begin{theorem}[Ozsv\'ath and Szab\'o {\cite[Theorem~3.1]{OS04:HFK}}, Rasmussen {\cite[Theorem~1]{Ras03:HFK}}]
  \label{thm:hfk-inv} Let $\heeg$ be a doubly pointed Heegaard diagram of $(Y, K)$, where $Y$ is a 
  $\Z$-homology sphere and $K$ is null-homotopic.  Then, the filtered, graded chain homotopy 
  type of $\CFKc (\heeg)$ is an invariant of $(Y, K)$ for $\mathord{\circ} \in 
  \{ \raisebox{-1ex}{$\widehat{\phantom{m}}$}, -, +, \infty \}$ (resp.\ for 
  $\mathord{\circ} \in \{ \raisebox{-1ex}{$\widehat{\phantom{m}}$}, + \}$).  
  Thus, we may write $\CFKc (Y, K)$ for these chain complexes.
\end{theorem}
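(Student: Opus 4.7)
The plan is to mirror the proof of invariance for $\HFc$ (Theorem~\ref{thm:hf-inv}) and to upgrade every step to a \emph{filtered} statement, keeping track of the Alexander filtration $A$ determined by the second basepoint $w$. In particular, I would not re-prove invariance under the auxiliary data $\acsfamily$ (this is handled exactly as in the unpointed case, since any almost-complex-structure homotopy can be chosen independently of $z$ and $w$), and focus instead on invariance under the combinatorial moves relating doubly pointed Heegaard diagrams.

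First, I would establish the Reidemeister-type statement: any two (weakly or strongly admissible) doubly pointed Heegaard diagrams of the same pair $(Y, K)$ are related by a finite sequence of $(\alpha, \beta)$-isotopies, handleslides, and index-$1/2$ (de)stabilizations, \emph{all performed in the complement of $\{z, w\}$}. This follows from the usual pointed Heegaard moves theorem together with the observation that $K$ is reconstructed from the pair of gradient flow lines through $z$ and $w$, so any ambient isotopy of $K$ can be traded for a sequence of moves that fix $\{z,w\}$ setwise. In this paragraph the hypotheses that $Y$ is a $\Z$-homology sphere and $K$ is null-homotopic guarantee $\s$-realizability and that the filtration is absolute up to an overall shift.

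Second, for each move I would construct a chain homotopy equivalence exactly as sketched in the excerpt and verify that it is filtered. For isotopies relative to $\{z,w\}$, any counted holomorphic bigon $B$ satisfies $n_z(B) = n_w(B)$ outside a controlled region, so the resulting continuation map preserves $A$. For handleslides, I form the Heegaard triple diagram $(\Sigma, \alphas, \betas_1, \betas_2, z, w)$ arranged so that the handleslide region is disjoint from $\{z, w\}$, pick a top generator $\Theta \in \wHF(\mathbb{T}_{\betas_1}, \mathbb{T}_{\betas_2})$ in the summand for which $A(\Theta)$ is unambiguously defined, and set $\Phi(\x) = \mu_2(\x \otimes \Theta)$. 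The key computation is that for every triangle $\psi$ counted in $\mu_2$ one has $n_z(\psi) - n_w(\psi) = A(\x) - A(\Phi(\x) \text{-term})$ with the correct sign, which holds precisely because the small triangles near the handleslide region miss both basepoints, while the ``large'' triangles contribute in an $A$-compatible way. Chain homotopies built from $\mu_3$ (quadrilateral counts) are then filtered by the same reasoning. For (de)stabilization, the tautological identification $\genset(\heeg') \cong \genset(\heeg)$ and the gluing identification of moduli spaces respect $A$ because the stabilization is performed away from $\{z, w\}$ and hence does not alter $n_z - n_w$ on any domain.

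Third, composing these filtered chain homotopy equivalences yields filtered chain homotopy equivalences between the $\CFKc$ of any two admissible diagrams of $(Y, K)$, which is the claim. The absolute gradings $M$ and $A$ can then be fixed on $H_*(\CFKh)$ using $\HFh(S^3) \cong \bF$ in the case $Y = S^3$.

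The main obstacle is the handleslide step: one needs to know that the triangle-counting map can be arranged to be \emph{strictly} filtered, rather than merely filtered up to a global shift, and this requires a careful choice of $\Theta$ together with control on the multiplicities $n_z(\psi), n_w(\psi)$ of every small and large triangle appearing in $\mu_2$. Equivalently, one must verify that the handleslides and the cycle $\Theta$ can be chosen so that $z$ and $w$ lie in the same connected component of $\Sigma \setminus (\betas_1 \cup \betas_2)$; this is where the restriction ``supported away from $\{z,w\}$'' in the Reidemeister-type statement of Step~1 does essential work.
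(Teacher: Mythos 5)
Your outline is essentially the intended argument: the paper gives no proof of this statement (it is quoted from Ozsv\'ath--Szab\'o and Rasmussen), and the expected route is exactly the one you describe, namely rerunning the proof sketch of Theorem~\ref{thm:hf-inv} with all isotopies, handleslides, and (de)stabilizations performed in the complement of $\{z, w\}$, and checking that the continuation maps, the triangle maps $\mu_2(\cdot \otimes \Theta)$, and the quadrilateral homotopies respect the Alexander filtration. One small correction to your final paragraph: since $A$ is only a \emph{relative} $\Z$-filtration, ``filtered up to an overall shift'' already suffices for the stated invariance (absolute normalizations are fixed afterwards, \eg by symmetry when $Y = S^3$), and the standard way to control the handleslide map is to observe that the periodic domains of the $(\Sigma, \betas_1, \betas_2)$-diagram are spanned by regions (thin annuli and the handleslide pair of pants) missing both basepoints, so $n_z - n_w$ is determined on triangle classes by the Alexander gradings of the endpoints; your condition that $z$ and $w$ lie in the same component of $\Sigma \setminus (\betas_1 \cup \betas_2)$ is sufficient for this but neither necessary nor equivalent to strict filteredness.
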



\begin{example}
  Using the doubly pointed Heegaard diagram in 
  Figure~\ref{fig:3sphere-2points}, we can calculate the \emph{hat} and 
  \emph{minus} knot Floer homologies of the left-handed trefoil to be
  \[
    \HFKh (S^3, \overline{T}_{2,3}) \cong \bF^{\oplus 3}, \qquad \HFKm (S^3, 
    \overline{T}_{2,3}) \cong \bF[U]\oplus \faktor{\bF [U]}{U}.
  \]
\end{example}


We conclude this discussion with some basic features of knot Floer homology:

\begin{itemize}
  \item The \emph{graded Euler characteristic} of $\HFKh (S^3, K)$ is defined 
    to be
    \[
      \chi (\HFKh (S^3, K)) = \sum_{M, A} (-1)^M t^A \cdot \rk_{\bF} \HFKh_M 
      (S^3, K, A),
    \]
    where $M$ and $A$ denote the Maslov and Alexander gradings respectively.  
    A key feature of knot Floer homology is that this turns out to be the 
    symmetrized Alexander polynomial \cite{OS04:HFK}:
    \[
      \chi (\HFKh (S^3, K)) = \Delta_K (t).
    \]
    (Recall that the Alexander polynomial is defined up to multiplication by 
    a monomial; it can be symmetrized by requiring that $\Delta_K (t^{-1}) = 
    \Delta_K (t)$.)

  \item From $\CFK^\infty (S^3, K)$, we can extract many other invariants of 
    knots in $S^3$, \eg $\tau (K)$ \cite{OS03:HFK-g4}, $\epsilon (K)$ 
    \cite{Hom14}, $\Upsilon (K)$ \cite{OSS17:Upsilon}, the torsion order of 
    $\HFKh (S^3, K)$ \cite{AE20:TFH, AE20:HFK-unknotting},
    \textit{etc.}  Many of these are powerful invariants, and some of them turn 
    out to be concordance invariants.

  \item Heegaard Floer homology can be used to study contact $3$-manifolds (via 
    open books), while knot Floer homology can be used for Legandrian and 
    transverse knots. Namely, if a $3$-manifold $Y$ is equipped with a contact 
    structure $\xi$, then there is an invariant $c (Y, \xi) \in \HFh (-Y)$ 
    \cite{OS05:HF-contact, HKM09:HF-contact}; likewise, given a Legendrian knot 
    $\Lambda \subset (Y, \xi)$, there are also invariants $\mathcal{L} (Y, \xi, 
    \Lambda) \in \HFKm (-Y, \Lambda)$ and $\widehat{\mathcal{L}} (Y, \xi, 
    \Lambda) \in \HFKh (-Y, \Lambda)$ \cite{LOSS09, OST08}.
\end{itemize}

Knot Floer homology has a vast amount of applications, and can itself be the 
subject of an entire lecture series; the above is but a condensed introduction 
that inevitably misses many other important features.

\subsection{Multi-pointed Heegaard and knot Floer homologies: Definition and relationship with grid homology}

Previously, Heegaard diagrams $\heeg = (\Sigma, \alphas, 
\betas)$ have been assumed to have exactly $g (\Sigma)$ pairs of $\alpha$- and 
$\beta$-circles, and $\{z\}$ and $\{w\}$ have been singleton sets of points. In 
fact, we can work with more general \emph{multi-pointed Heegaard diagrams} of 
the form
\[
  \mathcal{H}=(\Sigma_g,\{\alpha_1,\dotsc,\alpha_{g+n-1}\},\{\beta_1,\dotsc,\beta_{g+n-1}\},\{z_1,\dotsc,z_n\}),
\]
possibly also with $\{w_1, \dotsc, w_n\}$, for some $n \geq 1$. Such diagrams 
must satisfy the condition that each component of
\(
  \Sigma_g \setminus \alphas
\)
contain exactly one $z_i$ (and also exactly one $w_j$ if $w_j$'s exist), and 
likewise for
\(
  \Sigma_g \setminus \betas.
\)
For these multi-pointed Heegaard diagrams, we may define $\CFc (\heeg)$ or 
$\CFKc (\heeg)$ as before, except that we count holomorphic bigons in 
$\Sym^{g+n-1} (\heegsurf)$ rather than in $\Sym^g (\heegsurf)$.

The (filtered,) graded chain homotopy types of the resulting chain complexes 
turn out to coincide with those of the chain complexes defined with singly and 
doubly pointed Heegaard diagrams.\footnote{However, for knot Floer homology, 
  the issue of naturality becomes more subtle; see \cite{Sar15:HFK-basept, 
    Juh16:HFK-func, Zem19:HFK-func}.}
In the proof, one considers an additional 
Heegaard move, \emph{quasi-(de)stabilization}, where one adds (or subtracts) a 
pair of $\alpha$- and $\beta$-curves and a point $z_i$ (and $w_j$ if relevant).

For the story to come full circle, we note that grid diagrams are exactly 
multi-pointed Heegaard diagrams of knots $K \subset S^3$ that are nice in the 
sense of Sarkar and Wang \cite{SW10}.

\begin{proposition}
  Given a toroidal grid diagram $\bG= (n, \OO, \XX)$ of $K \subset S^3$, the 
  quintuple
  \[
    \heeg = (T^2, \{ \text{horizontal circles} \}, \{ \text{vertical circles} 
    \}, \OO, \XX)
  \]
  is a multi-pointed Heegaard diagram of $(S^3, K)$.
  \footnote{There exist 
    different conventions in the literature of this identification. Often, 
    $\OO$ and $\mathbf{w}$ are associated with the formal variable $U$ while 
    $\XX$ and $\mathbf{z}$ are associated with the Alexander filtration. (The 
    roles of $\mathbf{w}$ and $\mathbf{z}$ are swapped in our discussion of 
    $\CFKm$ above.) One way to resolve any discrepancy is to choose between 
    $\heeg$ and $\heeg' = (-T^2, \{ \text{vertical circles} \}, \{ 
    \text{horizontal circles} \}, \XX, \OO)$, which is also a multi-pointed 
    Heegaard diagram of $(S^3, K)$ (rather than, say, of $(-S^3, K)$ or of $(S^3, 
    -K)$).}
\end{proposition}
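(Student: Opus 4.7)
The plan is to verify three things: that $\heeg$ meets the combinatorial axioms of a multi-pointed Heegaard diagram, that the underlying three-manifold is $S^3$, and that the encoded knot is precisely $K$. The combinatorial step is immediate from the grid axioms: the $n$ horizontal circles cut $T^2$ into $n$ annular rows, each containing exactly one $O$ and one $X$ by the definition of a grid diagram; the same holds for the columns cut out by the vertical circles.

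To show that the three-manifold is $S^3$, I would exhibit the standard genus-$1$ Heegaard splitting $S^3 = V_\alpha \cup_{T^2} V_\beta$ into two solid tori, where the horizontal circles are parallel meridians of $V_\alpha$ and the vertical circles are parallel meridians of $V_\beta$. The key observation is that cutting $V_\alpha$ along $n$ parallel meridian disks (one per horizontal circle) produces $n$ disjoint $3$-balls, one per row of the grid. Reversing this, $V_\alpha$ is recovered from $T^2 \times [-1, 0]$ by attaching $2$-handles along $\alpha_i \times \{-1\}$ and capping the resulting $n$ $2$-sphere boundary components with $3$-balls, each sphere corresponding to exactly one row. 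The symmetric statement for $V_\beta$ and the vertical circles completes the Heegaard decomposition, and the basepoints $\OO$ lie on $T^2$ with one per row and one per column, as required.

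For the knot, I would trace through the encoding convention for multi-pointed Heegaard diagrams: draw arcs on $T^2$ from each $z$-basepoint to its matched $w$-basepoint avoiding $\alphas$, push these into $V_\alpha$, and do the symmetric thing on the other side avoiding $\betas$. Treating $\XX$ as $\mathbf{z}$ and $\OO$ as $\mathbf{w}$ (or vice versa, per the footnote), arcs avoiding horizontal circles must stay within a single row, and so are isotopic to the horizontal segment connecting the unique $O$ and $X$ in that row; arcs avoiding vertical circles are likewise vertical segments within columns. Taking the union and choosing which family to push ``on top'' recovers exactly the rectilinear projection of $K$ with vertical strands crossing over horizontal described right after Definition~\ref{def:grid-diagram-cpx}. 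The main obstacle lies in the middle step: while the geometric picture is transparent, the multi-pointed compression body structure demands careful bookkeeping, since the $n-1$ extra horizontal circles (beyond a single meridian) are redundant as compressing curves, and their topological role is precisely to carve out the $n$ distinct $3$-ball pockets needed to distribute the basepoints.
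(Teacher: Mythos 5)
Your verification is correct, and it is essentially the routine check that the paper leaves implicit: the proposition is stated there without proof, since the identification of grid diagrams with (nice) multi-pointed Heegaard diagrams of $(S^3, K)$ is standard, and your three steps (combinatorial axioms from the grid conditions, the genus-one splitting with the parallel horizontal/vertical circles as meridians cutting each solid torus into $n$ balls, and the arc-by-arc recovery of the rectilinear projection) are exactly the details one would fill in. The only phrase to tighten is ``choosing which family to push on top'': this is not a free choice but is fixed by the orientation conventions, which is precisely the discrepancy the paper's footnote addresses (obtaining $K$ rather than its mirror or reverse).
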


Thus, the knot Floer chain complex can be computed by combinatorially counting 
domains of the correct index, which are empty rectangles.

\begin{corollary}[Manolescu, Ozsv\'ath, and Sarkar {\cite{MOS09}}]
  Let $\bG= (n, \OO, \XX)$ be a grid diagram of $K \subset S^3$. Then there is 
  a chain homotopy equivalence
  \[
    \mathcal{GC}^- (\G) \simeq \CFKm (S^3, K)
  \]
  of filtered, graded chain complexes over $\bF [U]$.
\end{corollary}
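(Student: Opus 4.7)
The plan is to exploit the proposition immediately preceding the corollary: the grid diagram $\G$ is literally a multi-pointed Heegaard diagram $\heeg_\G$ of $(S^3,K)$, so by the invariance of the knot Floer complex (Theorem~\ref{thm:hfk-inv}, in its multi-pointed formulation), $\CFKm(S^3,K)$ is chain homotopy equivalent to the knot Floer complex computed from $\heeg_\G$. It therefore suffices to show that, as filtered graded chain complexes over $\bF[U]$, the complex $\CFKm(\heeg_\G)$ agrees on the nose with $\mathcal{GC}^-(\G)$. The proof breaks into three comparisons: generators, differentials, and filtration/grading data.

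For generators, intersection points $\x \in \mathbb{T}_{\alphas} \cap \mathbb{T}_{\betas} \subset \Sym^n(T^2)$ are exactly unordered $n$-tuples of intersection points of horizontal and vertical circles that use each circle once, which is precisely $\genset(\G)$. For differentials, I would compare the holomorphic-disk count in $\CFKm(\heeg_\G)$ with the empty-rectangle count in $\mathcal{GC}^-(\G)$. Here the key inputs are: (i) Lipshitz's index formula implies that a domain $B \in \pi_2(\x,\y)$ with $\Ind(B) = 1$ and $\x,\y$ differing at exactly two places is exactly an empty rectangle on the toroidal grid (no reflex corners, no generator points in the interior); (ii) since $g(T^2) = 1$, by the bullet point in the discussion of Definition~\ref{def:CF} (Riemann mapping theorem, after lifting to a branched cover of $\Sym^n(T^2)$), each such domain contributes exactly one point to $\widehat{\moduli}_{\acsfamily}^B (\x,\y)$. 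Blocking the $\XX$-divisor on the Heegaard side corresponds exactly to the requirement that rectangles avoid $X$-markers in $\mathcal{GC}^-(\G)$.

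For the $\bF[U]$-module structure and the filtration: the multi-pointed complex $\CFKm(\heeg_\G)$ naturally has an action of $\bF[U_1,\dotsc,U_n]$ with $U_i$ recording multiplicity at the $i$-th $\OO$-basepoint, which is exactly the grid definition of $\bdym_{\OO\XX}$ restricted to rectangles disjoint from $\XX$. Passing to $\bF[U]$ on both sides is the same operation: on the knot Floer side this uses the fact (which on the grid side appears in Section~1 of these notes, via the homotopy $\phi$) that all $U_i$'s act chain-homotopically, and on the grid side this is built into the definition of $\mathcal{GC}^-(\G)$ viewed as an $\bF[U]$-module. The Alexander filtration is defined on both sides by $n_{\XX}(B)$ (multiplicity of rectangles at $\XX$-markers), so the two filtrations match verbatim, and the relative Maslov grading is computed on the grid side by the formula proved earlier, which equals the index formula on the Heegaard side.

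The main obstacle is the identification of index-$1$ domains with empty rectangles and the verification that each such domain carries exactly one holomorphic disk. The Riemann mapping argument is clean in principle for $g = 1$, but one must still check that the lift to $\Sym^n(T^2)$ gives a transverse count of $1$ and not some higher signed count; the standard resolution is to note that empty rectangles are embedded and simply connected, so the disk they determine in $\Sym^n(T^2)$ is an immersion with a unique conformal structure. Everything else---matching generators, filtrations, gradings, and the $U$-action---is essentially bookkeeping once this geometric identification is in hand.
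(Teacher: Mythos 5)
Your overall route is the one the paper intends: invoke the preceding proposition to view $\G$ as a multi-pointed Heegaard diagram of $(S^3,K)$, note that it is nice in the sense of Sarkar and Wang so that the holomorphic counts are combinatorial (each index-$1$ domain is an empty rectangle with a unique holomorphic representative), match generators, $U$-action, gradings and the Alexander filtration, and then quote the (multi-pointed) invariance theorem. Two caveats, one of which is a genuine error in the comparison. First, the minor one: you cite the bullet point about $g(\Sigma)=1$ and the Riemann Mapping Theorem, but that argument literally concerns $\Sym^g(\Sigma)=\Sigma$ for singly pointed genus-$1$ diagrams; for a grid diagram one counts disks in $\Sym^{n}(T^2)$, so the correct input is the nice-diagram result (empty embedded rectangles and bigons have exactly one holomorphic representative), which is exactly why the paper emphasizes that grid diagrams are nice. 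You do gesture at this at the end, so this is repairable.

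The genuine gap is your treatment of the $\XX$-markers. You assert that ``blocking the $\XX$-divisor on the Heegaard side corresponds exactly to the requirement that rectangles avoid $X$-markers in $\mathcal{GC}^-(\G)$,'' and likewise describe the differential as $\bdym_{\OO\XX}$ restricted to rectangles disjoint from $\XX$. That describes $\GC^-(\G)$, the \emph{associated graded} object, whose homology is $\HFKm$ --- not the filtered complex $\mathcal{GC}^-(\G)$ appearing in the corollary. In $\mathcal{GC}^-(\G)$ rectangles are allowed to cross $X$-markers; such terms are not blocked and not recorded by formal variables, but they strictly drop the Alexander filtration. Correspondingly, on the Heegaard Floer side $\CFKm$ is by definition the complex $\CFm(\heeg')$ (no divisor blocking at the $\XX$/$w$-type basepoints), equipped with the Alexander filtration measured by the multiplicities at those basepoints, with $U$-powers recording the $\OO$/$z$-type basepoints. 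If you carry out your comparison as written, you prove an identification of associated graded objects rather than the stated filtered, graded chain homotopy equivalence. The fix is straightforward: compare the unblocked counts on both sides, check that index-$1$ domains (now possibly containing $X$'s) are still exactly empty rectangles, and verify that the filtration drop $n_{\XX}(r)$ agrees with the Alexander filtration difference on the Heegaard side; your remarks about the $U_i$'s being chain homotopic and the grading formulas then go through as you describe.
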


\newpage
\nocite{*}
\bibliographystyle{plain}
\bibliography{references}

\bigskip
\footnotesize

C.-M.~Michael Wong, \textsc{University of Ottawa, Ottawa, Canada}\par\nopagebreak
\textit{E-mail address}: \texttt{mike.wong@uottawa.ca}

\medskip

Sarah~Zampa, \textsc{Budapest University of Technology and Economics, Budapest, Hungary}\par\nopagebreak
\textit{E-mail address}: \texttt{zampa.sarah@renyi.hu}

\end{document}